%%%%%%%%%%%%%%%%%%%%%%%%%%%%%%%%%%%%%%%%%%%%%%%%%%%%%%%%%%%%%%%%%%%%%%%%%%%%%
%%                                                                         %%
%%    Jacobi forms and pullbacks of Maass relation for Eisenstein series   %%
%%                                              (revised version of ver. 03)                  %%
%%                                                                         %%
%%          Shuichi Hayashida                                              %%
%%               since 2012/8/27                                           %%
%%               current 2013/1/12                                         %%
%%               revised 2013/1/15                                        %%
%%                                                                         %%
%%%%%%%%%%%%%%%%%%%%%%%%%%%%%%%%%%%%%%%%%%%%%%%%%%%%%%%%%%%%%%%%%%%%%%%%%%%%%

%%%%%%%%%%%%%%%%%%%%%%%%%%%%
%% This is a LaTeX2e file %%
%%%%%%%%%%%%%%%%%%%%%%%%%%%%
\documentclass[12pt]{amsart}
\NeedsTeXFormat{LaTeX2e}

\usepackage{amsfonts,amsthm,amssymb,amsmath}
\usepackage{hyperref}

\oddsidemargin = 1cm \evensidemargin = 1cm \textwidth =6.2in
\textheight =8in

%%%%%%%%%%%%%%%
%% Macros    %%
%%%%%%%%%%%%%%%

\def\Z{{\mathbb Z}}
\def\Qq{{\mathbb Q}}
\def\R{{\mathbb R}}
\def\C{{\mathbb C}}
\def\H{{\mathfrak H}}

\def\sq{\hbox{\rlap{$\sqcap$}$\sqcup$}}
\def\qed{\ifmmode\sq\else{\unskip\nobreak\hfil
         \penalty50\hskip1em\null\nobreak\hfil\sq
         \parfillskip=0pt\finalhyphendemerits=0\endgraf}\fi}

\def\smat#1#2#3#4{\left(\begin{smallmatrix}#1&#2\\#3&#4\end{smallmatrix}\right)}

%%%%%%%%%%%%%%%%%%%%
%% Theorem's etc. %%
%%%%%%%%%%%%%%%%%%%%

\newtheorem{theorem}{Theorem}
\newtheorem{lemma}[theorem]{Lemma}
\newtheorem{prop}[theorem]{Proposition}
\newtheorem{cor}[theorem]{Corollary}

\newcommand     {\todo}[1]      {\begin{flushleft}\noindent{\tt - to be completed - #1}\end{flushleft}}

\numberwithin{theorem}{section}
\numberwithin{equation}{section}

%\setcounter{section}{-1}

%%%%%%%%%%%%%%%%%%%%
%%    Title       %%
%%%%%%%%%%%%%%%%%%%%

%\title[Maass relation for Eisenstein series]
%{Pullbacks of Maass relation for Eisenstein series}
\title{On generalized Maass relations for the Miyawaki-Ikeda lift}
\author[S.~Hayashida]{Shuichi Hayashida
}
\date{\today}
\keywords{Siegel modular form, Jacobi form, Siegel-Eisenstein series,
  Duke-Imamoglu-Ibukiyama-Ikeda lift,
  Miyawaki-Ikeda lift, Maass relation}
\subjclass[2010]{(primary) 11F46,  (secondary) 11F66}

%%%%%%%%%%%%%%%%%%
%% The document %%
%%%%%%%%%%%%%%%%%%

\begin{document}

\begin{abstract}
%In the present article we shall give a certain generalization of Maass relation for Siegel-Eisenstein
%series of arbitrary degrees and also for Duke-Imamoglu-Ibukiyama-Ikeda lifts.
%By using these relations we shall give another proof of the Miyawaki-Ikeda lifts
%of two elliptic modular forms.
Some generalizations of the Maass relation for Siegel modular forms
of higher degrees have been obtained by several authors.
In the present article we first give a new generalization of the Maass relation
for Siegel-Eisenstein series of arbitrary degrees.
Furthermore, we show that the Duke-Imamoglu-Ibukiyama-Ikeda lifts satisfy
this generalized Maass relation with some modifications.
As an application of the generalized Maass relation in the present article
we give a new proof of the Miyawaki-Ikeda lifts
of two elliptic modular forms.
Namely, we compute the standard $L$-function
of the Miyawaki-Ikeda lifts of two elliptic modular forms
by using the generalized Maass relation.
%Here Miyawaki-Ikeda lifts, which were originally conjectured by Miyawaki
%and were proved by Ikeda, are lifts from pairs of an elliptic modular form and
%a Siegel modular form to Siegel modular forms of higher degrees.
\end{abstract}

\maketitle

\section{Introduction}\label{sec:intro}

\subsection{}
The Maass relation is a relation among Fourier coefficients of Siegel-Eisenstein
series of degree two, and the Maass relation characterizes
the Saito-Kurokawa lifts (cf. \cite{EZ}.)
%In his two articles \cite{Ya,Ya2}, Yamazaki has generalized the
%Maass relation for Siegel-Eisenstein series of arbitrary degrees.
In his article~\cite{Ya} Yamazaki has obtained a generalization of the Maass relation
for Siegel-Eisenstein series of arbitrary degrees.
Furthermore, in~\cite{Ya2} Yamazaki obtained a relation among Jacobi-Eisenstein series
of arbitrary degrees.
Here the Jacobi-Eisenstein series is a Jacobi form which is constructed like
a Siegel-Eisenstein series.
This relation among Jacobi-Eisenstein series is necessary to obtain a new
generalization of the Maass relation, which is different from the generalized Maass relation in~\cite{Ya}.
However, the relation among Jacobi-Eisenstein series in~\cite{Ya2} is not enough to
obtain a new generalization of the Maass relation,
because in~\cite{Ya2} the Jacobi-Eisenstein series of \textit{index $1$}
is treated and we need the relation among the Jacobi-Eisenstein series of \textit{arbitrary index}.
One of the aim of the present article is to generalize the relation among Jacobi-Eisenstein
series obtained in~\cite{Ya2} for arbitrary index and to give a new generalization of the Maass relation
for Siegel-Eisenstein series of general degrees.

On the other hand,
a generalization of the Saito-Kurokawa lift for Siegel modular forms of even degrees was
conjectured by Duke and Imamoglu, and by Ibukiyama, independently,
and the conjecture was solved by Ikeda~\cite{Ik}.
In the present article, we call these lifts the Duke-Imamoglu-Ibukiyama-Ikeda lifts.
It is known that the Duke-Imamoglu-Ibukiyama-Ikeda lifts satisfy the generalized Maass relations in~\cite{Ya}
by inserting the Satake parameters of the preimage of the Duke-Imamoglu-Ibukiyama-Ikeda lift into the relation (cf. \cite{FJlift}.)

By applying the Duke-Imamoglu-Ibukiyama-Ikeda lift,
Ikeda~\cite{Ik2} solved and generalized one of the two conjectures posed by Miyawaki~\cite{Mi}
under a certain assumption.
Namely, he obtained lifts from pairs of an elliptic modular form and a Siegel modular form of
degree $r$ to Siegel modular forms of degree $2n+r$
under the assumption that the constructed Siegel modular form does not vanish identically.
In the present article we call these lifts the Miyawaki-Ikeda lifts.
In~\cite{Ik2} Ikeda obtained a conjecture about the relation between
the Petersson norm of the Miyawaki-Ikeda lift and a special value of a certain $L$-function.
For more details about the conjecture of non-vanishing of the Miyawaki-Ikeda lift, we refer the reader to~\cite{Ik2}.

The purpose of the present article is as follows:
\begin{enumerate}
 \item[(1)]
  we generalize the relation among Jacobi-Eisenstein series given in \cite{Ya2}
  for arbitrary integer-indices
  and obtain a new generalization of the Maass relation for the Siegel-Eisenstein series
  of arbitrary degrees
  (Theorem~\ref{thm:eins}),
 \item[(2)]
  we show a new generalization of the Maass relation for the Duke-Imamoglu-Ibukiyama-Ikeda lifts
  (Theorem~\ref{thm:phi_maass}),
 \item[(3)]
  we obtain a new proof for the Miyawaki-Ikeda lifts of two elliptic modular forms
  by using the generalized Maass relations,
  namely, we give the expression of the standard $L$-function of the Miyawaki-Ikeda lifts
  of two elliptic modular forms
  (Theorem~\ref{thm:drei} and Corollary~\ref{coro:vier}.)
\end{enumerate}

As for generalization of the Maass relation,
Kohnen~\cite{Ko2} obtained another kind of generalization of the Maass relation
which is related to the Fourier-Jacobi coefficients of \textit{matrix index of size $2n-1$},
while the generalization of the Maass relation in the present article is related
to the Fourier-Jacobi coefficients of \textit{integer index}.
It is known that the generalized Maass relation in~\cite{Ko2}
characterizes the image of the Duke-Imamoglu-Ibukiyama-Ikeda lifts (cf. Kohnen-Kojima~\cite{KoKo}, Yamana~\cite{Ymn}.)

We remark that a certain identity of the spinor $L$-function of the Miyawaki-Ikeda lift
of two elliptic modular forms has been given by Heim~\cite{He}
for the case of degree three and weight twelve.
This identity has been generalized in~\cite{SpiM} for any odd degrees $2n-1$
and for any even weights $k$.

\subsection{}\label{ss:results}
We explain our results more precisely.
We denote by $\H_n$ the Siegel upper-half space of size $n$.
For integers $n$ and $k > n+2$, the Siegel-Eisenstein series of weight $k$ of degree $n+1$ is defined by
\begin{eqnarray*}
  E_{k}^{(n+1)}(Z)
 &:=&
  \sum_{M = \left( \begin{smallmatrix} A & B \\ C & D \end{smallmatrix}\right) \in \Gamma_{n+1,0}\backslash \Gamma_{n+1}}
          \det(C Z + D)^{-k},
\end{eqnarray*}
where $\tau \in \H_{n+1}$ and
where $\Gamma_{n+1} := \mbox{Sp}_{n+1}(\Z)$ is the symplectic group of size $2n+2$ with entries in $\Z$ and we set
$ \Gamma_{n+1,0} := \left\{ \begin{pmatrix} A & B \\ C & D \end{pmatrix} \in \Gamma_{n+1} \, | \, C = 0 \right\}$.
A Fourier-Jacobi expansion of $E_{k}^{(n+1)}$ is given by
\begin{eqnarray*}
  E_{k}^{(n+1)}\left(\begin{pmatrix} \tau & z \\ {^t z} & \omega \end{pmatrix}\right)
  &=&
  \sum_{m = 0}^{\infty} e_{k,m}^{(n)}(\tau,z) e^{2\pi i m \omega},
\end{eqnarray*}
where $\tau \in \H_{n}$, $\omega \in \H_{1}$ and $z \in \C^n$.
The form $e_{k,m}^{(n)}$ is called the $m$-th Fourier-Jacobi coefficient of $E_{k}^{(n+1)}$.
We remark that $e_{k,m}^{(n)}$ is a Jacobi form of weight $k$ of index $m$ of degree $n$
(cf. Ziegler~\cite{Zi}.)

We denote by $J_{k,m}^{(n)}$ the space of Jacobi forms of weight $k$ of index $m$
of degree $n$.
For the definition of Jacobi forms of higher degree, we refer the reader to \cite{Zi}
or Section~\ref{ss:Jacobi_group} in the present article.
We define two kinds of index-shift maps:
\begin{eqnarray*}
 V_{l,n-l}(p^2) \, : \, J_{k,m}^{(n)} \rightarrow J_{k,mp^2}^{(n)}, \\
 U(p) \, : \, J_{k,m}^{(n)} \rightarrow J_{k,mp^2}^{(n)}.
\end{eqnarray*}
Here the index-shift map $V_{l,n-l}(p^2)$ $(0 \leq l \leq n)$
is given by the action of the double coset $\Gamma_n \mbox{diag}(1_{l},p 1_{n-1}, p^2 1_{l}, p 1_{n-l}) \Gamma_n$.
For the precise definition of $V_{l,n-1}(p^2)$ see Section~\ref{ss:index_shift},
and we define $(\phi|U(d))(\tau,z) := \phi(\tau,d z)$ for $\phi \in J_{k,m}^{(n)}$ and
for any natural number $d$.

\begin{theorem}\label{thm:eins}
Let $e_{k,m}^{(n)}$ be the $m$-th Fourier-Jacobi coefficient of Siegel-Eisenstein series.
Then we obtain the relation
\begin{eqnarray*}
  &&
  e_{k,m}^{(n)}|\left(V_{0,n}(p^2), ..., V_{n,0}(p^2)\right) \\
  &=&
  \left(e_{k,\frac{m}{p^2}}^{(n)}|U(p^2),
          e_{k,m}^{(n)}|U(p),
          e_{k,mp^2}^{(n)} \right)
  \begin{pmatrix}  0 & 1 \\ p^{-k} & p^{-k}(-1+p\, \delta_{p|m}) \\ 0 & p^{-2k+2}\end{pmatrix}
  A_{2,n+1}^{p,k},
 \end{eqnarray*}
 where the both sides of the above identity are vectors of functions and
 $A_{2,n+1}^{p,k}$ is a certain matrix with size $2$ times $(n+1)$ which depends only on
 $p$ and $k$, and where we regard $e_{k,\frac{m}{p^2}}^{(n)}$ as identically $0$ if $p^2 {\not |} m$.
 Here $\delta_{p|m}$ is defined by 1 or 0, according as $p|m$ or $p {\not |} m$.
 For the precise definition of $A_{2,n+1}^{p,k}$, see Section~\ref{ss:Satake_Siegel_Phi}.
\end{theorem}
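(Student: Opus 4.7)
My starting point is to identify $e_{k,m}^{(n)}$ with a Jacobi-Eisenstein series of weight $k$, integer index $m$, and degree $n$, which is the standard Fourier-Jacobi expansion of the Siegel-Eisenstein series (cf.~\cite{Zi}). The plan is therefore to reduce the theorem to an identity purely among Jacobi-Eisenstein series: to compute $e_{k,m}^{(n)}|V_{l,n-l}(p^2)$ for each $l = 0, \ldots, n$ and to verify that these $n+1$ forms fit into the prescribed rank-two combination of the three building blocks $e_{k,m/p^2}^{(n)}|U(p^2)$, $e_{k,m}^{(n)}|U(p)$, and $e_{k,mp^2}^{(n)}$.

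First I would decompose each double coset $\Gamma_n \cdot \text{diag}(1_l, p\cdot 1_{n-l}, p^2\cdot 1_l, p\cdot 1_{n-l})\,\Gamma_n$ into right cosets (as set up in Section~\ref{ss:index_shift}) and compute the corresponding slash-action on $e_{k,m}^{(n)}$ theta-component by theta-component. In parallel, I would write out the Fourier coefficients of the three building blocks at an arbitrary index $(N,r)$ so that the proposed identity can be checked coefficient-by-coefficient. This reduces the statement to a purely local identity at the prime $p$.

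Next, I would exploit that $E_k^{(n+1)}$ is a Hecke eigenform induced from a character of the Siegel parabolic subgroup of $\mbox{Sp}_{n+1}(\Z)$. The local Hecke algebra at $p$ acting on this induced representation factors through a two-dimensional quotient, which is the natural source of the rank-two structure of the matrix $A_{2,n+1}^{p,k}$ on the right-hand side. Concretely, the two rows of $A_{2,n+1}^{p,k}$ should encode the degree-$(n+1)$ Satake data of $E_k^{(n+1)}$ at $p$, while the middle three-by-two matrix plays the role of a change-of-basis from the three building blocks $e_{k,m/p^2}^{(n)}|U(p^2)$, $e_{k,m}^{(n)}|U(p)$, $e_{k,mp^2}^{(n)}$ to the two-dimensional Hecke-eigen space. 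The entries $p^{-k}$, $p^{-k}(-1+p\,\delta_{p|m})$, $p^{-2k+2}$ should emerge naturally from tracking how the coset representatives rescale the elementary theta-terms of index $m$.

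The hard part will be the explicit local coset computation. Yamazaki's relation in~\cite{Ya2} handles only the case of index $1$; the extension to arbitrary $m$ requires redoing the orbit analysis with genuine sensitivity to whether $p\mid m$. This is what produces the anomalous $p\,\delta_{p|m}$ entry inside the three-by-two matrix: when $p\mid m$, certain coset representatives yield an extra factor of $p$ in the theta-series action that is absent when $p\nmid m$. Once this combinatorial matching is carried out for each fixed $l$, reassembling the per-coefficient identities into the global matrix identity of the theorem should follow routinely by summing the Fourier-Jacobi series and collecting the $n+1$ slots against the two Satake rows of $A_{2,n+1}^{p,k}$.
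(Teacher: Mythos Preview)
Your first step contains a conflation that would derail the argument: the $m$-th Fourier--Jacobi coefficient $e_{k,m}^{(n)}$ of the Siegel--Eisenstein series is \emph{not} the Jacobi--Eisenstein series $E_{k,m}^{(n)}$. By B\"ocherer's result (Proposition~\ref{prop:Bo_Satz}) one has
\[
  e_{k,m}^{(n)} \;=\; \sum_{d^2 \mid m} g_k\!\left(\tfrac{m}{d^2}\right)\, E_{k,m/d^2}^{(n)}\big|U(d),
\]
so $e_{k,m}^{(n)}$ is a genuine linear combination of Jacobi--Eisenstein series of several different indices; the two objects agree (up to scalar) only when $m$ is square-free. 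Consequently the theorem cannot be reduced to an identity for a single Jacobi--Eisenstein series of index $m$: one must track how the index-shift maps interact with the whole sum over $d$.

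The paper's proof accordingly runs in two stages. First it proves the analogous relation for $E_{k,m}^{(n)}$ itself (Proposition~\ref{prop:EV}), with a \emph{different} $3\times 2$ connecting matrix whose entries $a_{i,m,p,k}$ depend on the $p$-divisibility of $m$. Second, it sums that relation over $d$ with weights $g_k(m/d^2)$ and uses the multiplicative behaviour of $g_k$ (Lemma~\ref{lem:gkm}, Proposition~\ref{prop:sum_EU}) to collapse everything into the three forms $e_{k,m/p^2}^{(n)}|U(p^2)$, $e_{k,m}^{(n)}|U(p)$, $e_{k,mp^2}^{(n)}$. The clean $3\times 2$ matrix with the entry $p^{-k}(-1+p\,\delta_{p|m})$ appears only after this second step; it is not visible at the level of a single $E_{k,m}^{(n)}$.

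For the first stage, your plan to redo Yamazaki's coset analysis for arbitrary $m$ is correct in spirit, but the paper does not match Fourier coefficients or invoke the induced-representation picture you sketch. After showing by direct coset work (Lemmas~\ref{lem:E_K}--\ref{lem:K_linear}) that each $E_{k,m}^{(n)}|V_{l,n-l}(p^2)$ lies in the span of the three building blocks, it determines the coefficients by applying the Siegel $\Phi$-operator $n-1$ times to reduce to degree~$1$, where the computation is elementary (Lemma~\ref{lem:E_V_one}). The commutation of $\Phi$ with $V_{l,n-l}(p^2)$ (Lemma~\ref{lem:Phi_Tauschen}) is precisely what manufactures the matrix $A_{2,n+1}^{p,k}=\prod_{l} B_{l,l+1}(p^{l-k})$, and linear independence of the degree-$1$ building blocks (Lemma~\ref{lemma:linear_independent}) pins the coefficients down uniquely. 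This $\Phi$-operator reduction is the key device missing from your outline.
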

%In the case of $n=1$ the above relation follows from the Maass relation of Siegel-Eisenstein series
%of degree two.
The relation in Theorem\ref{thm:eins} is a new generalization of the Maass relation for Siegel-Eisenstein series of arbitrary degrees.
As for the function $e_{k,m}^{(n)} | V_n(p)$, a similar identity has already been given in~\cite{Ya}.
Here the operator $V_n(p)$ is obtained from the double coset
$\Gamma_n \mbox{diag}(1_n,p 1_n) \Gamma_n$.

Now we apply the relation in Theorem~\ref{thm:eins} to the Duke-Imamoglu-Ibukiyama-Ikeda lifts.
We denote by $S_k(\Gamma_n)$ the space of Siegel cusp forms of weight $k$ of degree $n$.
Let $f \in S_{2k}(\Gamma_1)$ be a normalized Hecke eigenform
and let $F \in S_{k+n}(\Gamma_{2n})$ be a Duke-Imamoglu-Ibukiyama-Ikeda lift of $f$ (cf. Ikeda~\cite{Ik2}.)
We remark that there is no canonical choice of $F$, however $F$ is determined up to constant multiple.
We consider the Fourier-Jacobi expansion of $F$:
\begin{eqnarray*}
 F\left(\begin{pmatrix} \tau & z \\ {^t z} & \omega \end{pmatrix}\right)
 &=&
 \sum_{m=1}^{\infty} \phi_m(\tau,z) e^{2\pi i m \omega},
\end{eqnarray*}
where $\tau \in \H_{n}$, $\omega \in \H_{1}$ and $z \in \C^n$.
Then $\phi_m$ is the $m$-th Fourier-Jacobi coefficient of $F$
and is a Jacobi cusp form of weight $k+n$ of index $m$ of degree $2n-1$.
We denote by $J_{k,m}^{(n)\, cusp}$ the space of Jacobi cusp forms of
weight $k$ of index $m$ of degree $n$.
The restriction of the maps $V_{l,n-l}(p^2)$ and $U(p)$ to $J_{k,m}^{(n)\, cusp}$
gives the maps from $J_{k,m}^{(n)\, cusp}$ to $J_{k,mp^2}^{(n)\, cusp}$.
Let $\alpha_p^{\pm1}$ be the complex numbers which satisfy
\begin{eqnarray*}
    (\alpha_p + \alpha_p^{-1})p^{k-\frac12}
  &=&
    a(p),
\end{eqnarray*}
where $a(p)$ is the $p$-th Fourier coefficient of $f$.

The following theorem is a generalization of the Maass relation for the Duke-Imamoglu-Ibukiyama-Ikeda lifts,
which is different from the ones in~\cite{Ko2} and in~\cite{FJlift}.
\begin{theorem}\label{thm:phi_maass}
 Let $\phi_m \in J_{k+n,m}^{(2n-1)\, cusp}$ be the $m$-th Fourier-Jacobi coefficient of
 the Duke-Imamoglu-Ibukiyama-Ikeda lift $F$ as the above.
 Then we have
  \begin{eqnarray*}
    &&
    \phi_m | \left( V_{0,2n-1}(p^2), ... , V_{2n-1,0}(p^2)\right) \\
    &=&
    p^{-(n-1)(2k-1)}
    \left(
      \phi_{\frac{m}{p^2}} | U(p^2), 
      \phi_{m} | U(p),
      \phi_{mp^2}
    \right)
    \begin{pmatrix}
      0              & 1 \\
      p^{-k-n} & p^{-k-n}(-1+p\, \delta_{p|m})  \\
      0              & p^{-2k-2n+2}
    \end{pmatrix}
    A'_{2,2n}(\alpha_p),
  \end{eqnarray*}
 where $A'_{2,2n}(\alpha_p)$ is a certain matrix with size $2$ times $2n$ which depends
 only on $f$ and $p$.
 We regard the form $\phi_{\frac{m}{p^2}}$ as identically zero if $p^2 {\not |} m$.
 The matrix $A'_{2,2n}(\alpha_p)$ is obtained by substituting $X_p = \alpha_p$ into a matrix $A'_{2,2n}(X_p)$.
 For the precise definition of $A'_{2,2n}(X_p)$, see Section~\ref{ss:Satake_Siegel_Phi}.
\end{theorem}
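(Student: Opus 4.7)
The strategy is to deduce Theorem~\ref{thm:phi_maass} from Theorem~\ref{thm:eins} by exploiting the Hecke-eigenform structure of the Duke--Imamoglu--Ibukiyama--Ikeda lift together with a dictionary between Siegel-level Hecke operators and the Jacobi index-shift maps $V_{l,n-l}(p^2)$.

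First I would set up the following general principle: for a Siegel modular form $\Phi$ of degree $2n$ with Fourier--Jacobi expansion $\Phi = \sum_m \phi_m \, e^{2\pi i m \omega}$, the action of a suitable family of Siegel Hecke operators at $p$ on $\Phi$, after extraction of the coefficient of $e^{2\pi i m\omega}$, translates into a matrix identity whose left-hand side is the row vector $(\phi_m | V_{0,2n-1}(p^2),\dots,\phi_m | V_{2n-1,0}(p^2))$ and whose right-hand side involves only the three Jacobi forms $\phi_{m/p^2}|U(p^2)$, $\phi_m | U(p)$, and $\phi_{mp^2}$. This dictionary between certain double cosets in $\mathrm{Sp}_{2n}(\Z)$ and the index-shift operators on Jacobi forms of degree $2n-1$ is the mechanism analogous to the one that underlies Theorem~\ref{thm:eins} for the Siegel--Eisenstein series (where the ambient degree is $n+1$ and the Jacobi degree is $n$).

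Next, since $F$ is a Hecke eigenform whose Satake parameters at each good prime $p$ are explicitly known in terms of $\alpha_p^{\pm 1}$ and powers of $p$ by Ikeda~\cite{Ik}, the same coefficient-extraction procedure that produced $A_{2,n+1}^{p,k}$ for $E_k^{(n+1)}$ produces, for $F$, a matrix built from these lift parameters. The Siegel--Eisenstein series corresponds to the case of ``trivial'' Satake parameters, while the DIII lift differs only by the replacement of one such parameter by $\alpha_p$; this is precisely the substitution $X_p \leftrightarrow \alpha_p$ encoded in the universal matrix $A'_{2,2n}(X_p)$ defined in Section~\ref{ss:Satake_Siegel_Phi}. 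The overall scalar $p^{-(n-1)(2k-1)}$ and the weight shift from $k$ to $k+n$ (visible, for instance, in the entries $p^{-k-n}$ and $p^{-2k-2n+2}$ replacing $p^{-k}$ and $p^{-2k+2}$) should come out from a careful bookkeeping of the normalization factors built into $V_{l,n-l}(p^2)$ and $U(p)$ at weight $k+n$ and degree $2n-1$.

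The main technical obstacle, in my view, is reconciling the Hecke identity on the Siegel side---which lives naturally at the level of double cosets and Satake parameters---with the coefficient-by-coefficient Jacobi identity stated in the theorem. One must verify that extraction of the $m$-th Fourier--Jacobi coefficient intertwines the two Hecke actions in the quantitative form dictated by the matrices $A_{2,n+1}^{p,k}$ and $A'_{2,2n}(\alpha_p)$; this requires a detailed computation with representatives of $\Gamma_{2n}\,\mathrm{diag}(1_l, p\,1_{2n-1-l}, p^2 1_l, p\,1_{2n-1-l})\,\Gamma_{2n}$ and with the boundary behaviour at indices divisible by $p$ (the source of the $\delta_{p|m}$ term, which appears identically in both theorems because it is intrinsic to the Jacobi side of the dictionary, not to the particular eigenform). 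Once this dictionary is in place, Theorem~\ref{thm:phi_maass} will follow by substituting $X_p = \alpha_p$ into the universal identity obtained from the Eisenstein case.
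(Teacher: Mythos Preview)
Your proposal rests on a ``general principle'' --- a dictionary whereby degree-$2n$ Siegel Hecke operators applied to $F$, followed by Fourier--Jacobi extraction, produce on one side the row vector $(\phi_m|V_{0,2n-1}(p^2),\dots,\phi_m|V_{2n-1,0}(p^2))$ and on the other a combination of $\phi_{m/p^2}|U(p^2)$, $\phi_m|U(p)$, $\phi_{mp^2}$. This principle is asserted but not proved, and there is already a type mismatch: $\phi_m|V_{l,2n-1-l}(p^2)$ has index $mp^2$, whereas the $m$-th Fourier--Jacobi coefficient of $F|T$ has index $m$ for any degree-$2n$ Hecke operator $T$. More importantly, the $V_{l,2n-1-l}(p^2)$ are built from double cosets in $\Gamma_{2n-1}$, not $\Gamma_{2n}$, and Theorem~\ref{thm:eins} itself is \emph{not} obtained via any such dictionary: it is proved by a direct coset computation on the Jacobi--Eisenstein series (Lemmas~\ref{lem:E_K}, \ref{lem:K_linear}, Proposition~\ref{prop:EV}) combined with B\"ocherer's formula and iterated Siegel $\Phi$-operators. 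Your first paragraph therefore misidentifies the mechanism behind Theorem~\ref{thm:eins}, and the analogous mechanism you invoke for Theorem~\ref{thm:phi_maass} is not supplied.

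The paper's argument is entirely different and bypasses degree-$2n$ Hecke operators. It uses Ikeda's explicit formula for the $(N,R)$-th Fourier coefficient of $\phi_m$, namely $C(|D_M|)\,f_M^{k-\frac12}\prod_q \widetilde{F}_q(M;\alpha_q)$, and the observation that the corresponding coefficient of $e_{k+n,m}^{(2n-1)}$ has the identical shape with $\alpha_q$ replaced by $q^{k-\frac12}$ and $C(\cdot)$ replaced by the nonvanishing Cohen--Eisenstein coefficient $h_{k+\frac12}(\cdot)$. Applying Theorem~\ref{thm:eins} to $e_{k+n,m}^{(2n-1)}$ and dividing through by $h_{k+\frac12}(|D_{M_1}|)$ yields an identity among finite products of the Laurent polynomials $\widetilde{F}_q(\,\cdot\,;q^{k-\frac12})$. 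Because this holds for infinitely many even $k$, it upgrades to an identity in formal variables $X_q$; a linear-independence argument (choosing three pairs $(N_j,R_j)$ giving an invertible $3\times 3$ matrix) shows the resulting coefficients $\Phi_i(X_p)$ lie in $\C[X_p+X_p^{-1}]$ and are independent of all choices. Specializing $X_q=\alpha_q$ then gives the relation for the Fourier coefficients of $\phi_m$, and comparison with Theorem~\ref{thm:eins} together with Lemma~\ref{lem:Adash} (again using infinitely many $k$) identifies the coefficient matrix as $p^{-(n-1)(2k-1)}$ times the $3\times 2$ matrix times $A'_{2,2n}(\alpha_p)$. The substitution $X_p\mapsto\alpha_p$ you anticipate is thus realized at the level of Fourier coefficients and Laurent polynomials, not through a Hecke-operator dictionary on the ambient Siegel form.
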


Now we apply the relation in Theorem~\ref{thm:phi_maass} to the Miyawaki-Ikeda lifts
of two elliptic modular forms.
Let $f$ and $F$ be as above.
Let $g \in S_{k+n}(\Gamma_1)$ be a normalized Hecke eigenform.
Then one can construct a Siegel cusp form $\mathcal{F}_{f,g}$ of weight $k+n$ of degree $2n-1$:
\begin{eqnarray*}
 \mathcal{F}_{f,g}(\tau)
 &:=&
 \int_{\Gamma_1\backslash \mathfrak{H}_1}
   F\left(\begin{pmatrix} \tau & 0 \\ 0 & \omega \end{pmatrix}\right)
   \overline{g(\omega)}\, \mbox{Im}(\omega)^{k+n-2} \, d\, \omega.
\end{eqnarray*}
The form $\mathcal{F}_{f,g}$ is the Miyawaki-Ikeda lift of $g$ associated to $f$.
It is shown by Ikeda~\cite{Ik2} that if $\mathcal{F}_{f,g}$ is not identically zero,
then $\mathcal{F}_{f,g}$ is an eigenfunction for Hecke operators for
the Hecke pair $(\Gamma_{2n-1}, \mbox{Sp}_{2n-1}(\Qq))$.
Furthermore, the standard $L$-function of $\mathcal{F}_{f,g}$ is expressed as
a certain product of $L$-functions related to $f$ and $g$.
Now by virtue of Theorem~\ref{thm:phi_maass}, we obtain a new proof of these facts
by using the generalized Maass relations.
%Namely, we obtain the following theorem.

\begin{theorem}\label{thm:drei}
Let $\mathcal{F}_{f,g} \in S_{k+n}(\Gamma_{2n-1})$ be the Miyawaki-Ikeda lift of $g$ associated to $f$.
%If $\mathcal{F}_{f,g}$ is not identically zero,
Then
\begin{eqnarray*}
 &&
 \mathcal{F}_{f,g} | \left(T_{0,2n-1}(p^2), ..., T_{2n-1,0}(p^2)\right)\\
 &=&
 p^{2nk+n-1}
 \left( p^{-k-n}, p^{-2k-2n+2} \lambda_g(p^2)\right) A'_{2,2n}(\alpha_p)\,
 \mathcal{F}_{f,g},
\end{eqnarray*}
where $T_{l,2n-1-l}(p^2)$ are Hecke operators (see Section~\ref{ss:index_shift})
and $A'_{2.2n}(\alpha_p)$ is the same matrix in Theorem~\ref{thm:phi_maass}.
Here $\lambda_g(p^2)$ is the eigenvalue of $g$ for $T_{1,0}(p^2)$.
\end{theorem}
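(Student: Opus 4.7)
The plan is to combine the Fourier-Jacobi expansion of $F$ with the Petersson integral defining $\mathcal{F}_{f,g}$, reducing the Siegel Hecke action on $\mathcal{F}_{f,g}$ to the Jacobi index-shift action on the coefficients $\phi_m$, and then invoking Theorem~\ref{thm:phi_maass}. Restricting the Fourier-Jacobi series of $F$ to $z=0$ gives $F(\mbox{diag}(\tau,\omega))=\sum_{m\ge 1}\phi_m(\tau,0)e^{2\pi im\omega}$, and unfolding the Petersson inner product against $g$ yields
$$\mathcal{F}_{f,g}(\tau)=\sum_{m\ge 1}I_g(m)\,\phi_m(\tau,0),\qquad I_g(m)=\frac{\Gamma(k+n-1)}{(4\pi m)^{k+n-1}}\,\overline{a_g(m)},$$
where $a_g(m)$ are the Fourier coefficients of $g$. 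Because $T_{l,2n-1-l}(p^2)$ acts only on $\tau$, it commutes with the $\omega$-integration, so $\mathcal{F}_{f,g}\,|\,T_{l,2n-1-l}(p^2)=\sum_m I_g(m)\bigl(\phi_m(\cdot,0)\,|\,T_{l,2n-1-l}(p^2)\bigr)$.

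The next step is a compatibility lemma identifying the Siegel Hecke action on the restriction with the $z=0$ evaluation of the Jacobi index-shift,
$$\phi_m(\tau,0)\,\big|\,T_{l,2n-1-l}(p^2)=p^{2k(2n-1)}\bigl(\phi_m\,\big|\,V_{l,2n-1-l}(p^2)\bigr)(\tau,0),$$
obtained by comparing the two (Siegel vs.~Jacobi) weight normalizations of the common double coset. Writing the $(l+1)$-th column of $A'_{2,2n}(\alpha_p)$ as $(a_{l+1},b_{l+1})^t$ and using $(\phi\,|\,U(d))(\tau,0)=\phi(\tau,0)$, Theorem~\ref{thm:phi_maass} at $z=0$ expresses $(\phi_m\,|\,V_{l,2n-1-l}(p^2))(\tau,0)$ as
$$p^{-(n-1)(2k-1)}\Bigl[\,b_{l+1}\,\phi_{m/p^2}(\tau,0)+p^{-k-n}\bigl(a_{l+1}-b_{l+1}+p\,\delta_{p|m}\,b_{l+1}\bigr)\phi_m(\tau,0)+p^{-2k-2n+2}b_{l+1}\,\phi_{mp^2}(\tau,0)\,\Bigr].$$

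Set $S_0=\mathcal{F}_{f,g}(\tau)$, $S_{\pm 1}=\sum_m I_g(m)\phi_{mp^{\pm 2}}(\tau,0)$ and $S_p=\sum_m\delta_{p|m}I_g(m)\phi_m(\tau,0)$. Decomposing the full degree-one Hecke operator as $T_{1,0}(p^2)+T_{0,1}(p^2)$ with the scalar action $g\,|\,T_{0,1}(p^2)=p^{k+n-2}g$, the hypothesis $g\,|\,T_{1,0}(p^2)=\lambda_g(p^2)g$ translates on Fourier coefficients to
$$\lambda_g(p^2)\,I_g(m)=p^{2(k+n-1)}I_g(mp^2)+\bigl(\delta_{p|m}p^{k+n-1}-p^{k+n-2}\bigr)I_g(m)+\delta_{p^2|m}I_g(m/p^2),$$
equivalently $\lambda_g(p^2)S_0=p^{2(k+n-1)}S_{-1}+p^{k+n-1}S_p-p^{k+n-2}S_0+S_{+1}$. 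Solving for $S_{-1}$ and substituting into the previous display, the $S_p$- and $S_{+1}$-contributions cancel thanks to matched $p$-powers, while the $-p^{k+n-2}$ correction precisely absorbs the $-p^{-k-n}b_{l+1}$ residual produced by the middle entry $p^{-k-n}(-1+p\,\delta_{p|m})$ of the matrix in Theorem~\ref{thm:phi_maass}; only a multiple of $S_0$ survives, with coefficient $p^{-k-n}a_{l+1}+p^{-2k-2n+2}\lambda_g(p^2)b_{l+1}$. Combined with the overall prefactor $p^{2k(2n-1)-(n-1)(2k-1)}=p^{2nk+n-1}$, this is exactly the claimed identity.

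The hardest step will be the compatibility lemma of the second paragraph: one must compare the symplectic/Hecke normalizations of $T_{l,2n-1-l}(p^2)$ on a Siegel modular form of degree $2n-1$ and weight $k+n$ against those of $V_{l,2n-1-l}(p^2)$ on the Jacobi group of degree $2n-1$, extracting the precise power $p^{2k(2n-1)}$. The subsequent cancellation argument is more mechanical, but it relies on a tight confluence of two facts: the specific entry $p^{-k-n}(-1+p\,\delta_{p|m})$ of the matrix in Theorem~\ref{thm:phi_maass} must match exactly the Hecke recursion for $T_{1,0}(p^2)$ on $g$, including the $-p^{k+n-2}$ scalar correction that separates $T_{1,0}(p^2)$ from the full classical Hecke operator.
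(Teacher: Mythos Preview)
Your ``compatibility lemma'' $\phi_m(\tau,0)\,|\,T_{l,2n-1-l}(p^2)=p^{2k(2n-1)}(\phi_m\,|\,V_{l,2n-1-l}(p^2))(\tau,0)$ is exactly identity~(\ref{eq:W_T_V}) in the paper, with degree $2n-1$, weight $k+n$, and similitude $p^2$; it follows immediately from the definitions and is not a hard step. The subsequent use of Theorem~\ref{thm:phi_maass} and the computation of the constant $p^{2k(2n-1)-(n-1)(2k-1)}=p^{2nk+n-1}$ are correct and coincide with the paper.

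The genuine gap is the unfolding formula
\[
\mathcal{F}_{f,g}(\tau)=\sum_{m\ge 1}I_g(m)\,\phi_m(\tau,0),\qquad I_g(m)=\frac{\Gamma(k+n-1)}{(4\pi m)^{k+n-1}}\,\overline{a_g(m)}.
\]
This is false: the Petersson inner product $\langle h,g\rangle$ of two cusp forms over $\Gamma_1\backslash\H_1$ is \emph{not} equal to $\sum_m a_h(m)\overline{a_g(m)}\,\Gamma(k+n-1)/(4\pi m)^{k+n-1}$. The latter sum is what one gets by integrating over the strip $\{0<u<1,\ 0<v<\infty\}$, i.e.\ over $\Gamma_\infty\backslash\H_1$, which is an infinite cover of $\Gamma_1\backslash\H_1$. (Your formula for $I_g(m)$ is the pairing of $g$ with the $m$-th Poincar\'e series, not with $e(m\omega)$.) Consequently $S_0$ is not $\mathcal{F}_{f,g}(\tau)$, and the cancellation you carry out, while algebraically correct for the quantities you defined, does not compute $\mathcal{F}_{f,g}\,|\,T_{l,2n-1-l}(p^2)$.

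The fix---and this is precisely the paper's proof---is to avoid unfolding altogether. Keep the factor $e(m\omega)$, sum over $m$, and observe that the ``second column'' combination
\[
\sum_{m>0}\Bigl\{\phi_{m/p^2}(\tau,0)+p^{-k-n}(-1+p\,\delta_{p|m})\,\phi_m(\tau,0)+p^{-2k-2n+2}\phi_{mp^2}(\tau,0)\Bigr\}e(m\omega)
\]
is exactly $p^{-2k-2n+2}\,F\bigl(\begin{smallmatrix}\tau&0\\0&\omega\end{smallmatrix}\bigr)\big|_{\omega}T_{1,0}(p^2)$. Then take the Petersson inner product with $g$ and use the self-adjointness of $T_{1,0}(p^2)$ to replace this by $\lambda_g(p^2)\,\mathcal{F}_{f,g}(\tau)$. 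Your Hecke recursion on $I_g(m)$ is nothing other than the Fourier-coefficient form of this self-adjointness statement, so after this repair the two arguments are essentially identical.
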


We denote by $\beta_p^{\pm1}$ the complex numbers which satisfy:
\begin{eqnarray*}
    (\beta_p + \beta_p^{-1})p^{\frac{k+n-1}{2}}
  &=&
    b(p),
\end{eqnarray*}
where $b(p)$ is the $p$-th Fourier coefficient of $g$.
The adjoint $L$-function of $g$ is defined by
\begin{eqnarray*}
   L(s,g,\mbox{Ad})
 &:=&
   \prod_p \left\{ (1-p^{-s})(1-\beta_p^2\, p^{-s})(1-\beta_p^{-2}\, p^{-s}) \right\}^{-1}.
\end{eqnarray*}

\begin{cor}\label{coro:vier}
If $\mathcal{F}_{f,g}$ is not identically zero, then
the Satake parameter of $\mathcal{F}_{f,g}$ at prime $p$ is
\begin{eqnarray*}
 \left\{ \mu_1^{\pm1}, ..., \mu_{2n-1}^{\pm1} \right\}
 &=&
 \left\{
   \beta_p^{\pm 2},
   \alpha_p^{\pm1} p^{-n+\frac32} , \alpha_p^{\pm1} p^{-n + \frac52} ,
     ... , \alpha_p^{\pm1} p^{n - \frac32}
 \right\}.
\end{eqnarray*}
%Here $(\beta_p + \beta_p^{-1})p^{\frac{k+n-1}{2}}$ equals to the $p$-th Fourier
%coefficient of $g$.
Furthermore, the standard $L$-function of $\mathcal{F}_{f,g}$ is
\begin{eqnarray*}
 L(s,\mathcal{F}_{f,g},\mbox{st})
 &=&
 L(s,g,\mbox{Ad}) \prod_{i=1}^{2n-2} L(s+k+n-1-i,f),
\end{eqnarray*}
where $L(s,f)$ is the Hecke $L$-function of $f$.
(see Section~\ref{ss:standard-L} for the definition of the standard $L$-function.)
\end{cor}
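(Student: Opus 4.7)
The plan is to deduce both claims directly from Theorem~\ref{thm:drei} by identifying the Satake parameters of $\mathcal{F}_{f,g}$ at each prime $p$ and then performing a routine Euler-factor computation. Theorem~\ref{thm:drei} gives, for each $l=0,1,\ldots,2n-1$, an explicit eigenvalue $\lambda_l(p^2)$ of $\mathcal{F}_{f,g}$ under $T_{l,2n-1-l}(p^2)$, expressed as a polynomial in $\alpha_p^{\pm 1}$, $\beta_p^{\pm 2}$ (via $\lambda_g(p^2)$) and $p^{\pm 1}$. Via the Satake isomorphism for the Hecke pair $(\Gamma_{2n-1},\mbox{Sp}_{2n-1}(\Qq))$, the collection $\{T_{l,2n-1-l}(p^2)\}_{l=0}^{2n-1}$ corresponds to an explicit set of Weyl-invariant Laurent polynomials in $\mu_0,\mu_1^{\pm 1},\ldots,\mu_{2n-1}^{\pm 1}$ that is rich enough to recover the unordered multiset $\{\mu_1^{\pm 1},\ldots,\mu_{2n-1}^{\pm 1}\}$ uniquely (for instance by inverting the Satake transform on a set of algebraically independent generators).

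Next I would verify that the candidate multiset
\[
\{\beta_p^{\pm 2},\ \alpha_p^{\pm 1}p^{-n+3/2},\ \alpha_p^{\pm 1}p^{-n+5/2},\ \ldots,\ \alpha_p^{\pm 1}p^{n-3/2}\}
\]
reproduces all $2n$ eigenvalues in Theorem~\ref{thm:drei}. The natural route, rather than brute force, is to exploit the fact that $A'_{2,2n}(\alpha_p)$ was built in Theorem~\ref{thm:phi_maass} to encode the action of $V_{l,2n-l}(p^2)$ on Fourier-Jacobi coefficients of the degree-$2n$ Ikeda lift $F$, whose Satake parameters at $p$ are $\{\alpha_p^{\pm 1}p^{-n+1/2},\ldots,\alpha_p^{\pm 1}p^{n-1/2}\}$. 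The Miyawaki integral against $g$ is a Petersson pairing in the last variable, and on Hecke eigenvalues it has the effect of removing the single pair of Satake parameters of $F$ attached to the $\omega$-direction and replacing it by the Satake parameters of $g$, namely $\beta_p^{\pm 2}$. The prefactor $(p^{-k-n},\ p^{-2k-2n+2}\lambda_g(p^2))$ in Theorem~\ref{thm:drei} is precisely the vector that implements this substitution pair-by-pair in the symmetric polynomials sitting in $A'_{2,2n}(\alpha_p)$.

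Once the Satake parameters are identified, the standard $L$-function is computed factor-by-factor from
\[
L_p(s,\mathcal{F}_{f,g},\mbox{st})^{-1}=(1-p^{-s})\prod_{i=1}^{2n-1}(1-\mu_ip^{-s})(1-\mu_i^{-1}p^{-s}).
\]
The pair $\mu_1^{\pm 1}=\beta_p^{\pm 2}$ combines with the $(1-p^{-s})$ factor to give exactly the local Euler factor of $L(s,g,\mbox{Ad})$. Each remaining pair $\alpha_p^{\pm 1}p^{n-1/2-i}$ ($i=1,\ldots,2n-2$) gives
\[
(1-\alpha_pp^{n-1/2-i-s})(1-\alpha_p^{-1}p^{n-1/2-i-s})=(1-a(p)p^{-(s+k+n-1-i)}+p^{2k-1-2(s+k+n-1-i)}),
\]
where the right-hand side is the local factor of $L(s+k+n-1-i,f)$, using the defining relation $(\alpha_p+\alpha_p^{-1})p^{k-1/2}=a(p)$. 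Taking the product over $i$ and over $p$ yields the claimed factorization.

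The main obstacle is the second step: to make rigorous the identification of the eigenvalue data encoded in $A'_{2,2n}(\alpha_p)$ with the symmetric Laurent polynomials in $\{\beta_p^{\pm 2}\}\cup\{\alpha_p^{\pm 1}p^{\pm(n-1/2-i)}\}_{i=1}^{2n-2}$ that represent $T_{l,2n-1-l}(p^2)$ under Satake. The cleanest way I would organize this is to package the $2n$ eigenvalue identities into a single generating-polynomial identity in an auxiliary variable $X$ (of the Andrianov/Rankin-style type attached to the representation on symmetric vectors), so that one verifies one formal identity between rational functions in $X$ rather than $2n$ separate scalar identities; the matrix structure of $A'_{2,2n}(\alpha_p)$ and the explicit row vector in Theorem~\ref{thm:drei} are tailored for exactly this comparison.
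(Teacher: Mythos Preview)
Your outline is essentially the paper's approach, and the $L$-function computation at the end matches the paper's almost verbatim. The difference is in the step you flag as the ``main obstacle'': the paper bypasses any guess-and-verify or generating-polynomial packaging by using the \emph{definition} of $A'_{2,2n}$ directly. Recall from \S\ref{ss:Satake_Siegel_Phi} that
\[
A'_{2,2n}(X_p)=B'_{2,2n}(p^{\frac32-n}X_p,\,p^{\frac52-n}X_p,\,\ldots,\,p^{n-\frac32}X_p)
\]
and that $B'_{2,2n}$ is exactly the transition matrix in the Satake isomorphism,
\[
(\varphi(T_{0,2n-1}(p^2)),\ldots,\varphi(T_{2n-1,0}(p^2)))=\Bigl(\textstyle\prod_{i=2}^{2n-1}X_i\Bigr)\,(\varphi(T_{0,1}(p^2)),\varphi(T_{1,0}(p^2)))\,B'_{2,2n}(X_2,\ldots,X_{2n-1}).
\]
Comparing this with the eigenvalue vector in Theorem~\ref{thm:drei} one reads off $\{\mu_2,\ldots,\mu_{2n-1}\}=\{p^{\frac32-n}\alpha_p,\ldots,p^{n-\frac32}\alpha_p\}$ with no further work. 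Then, because $A'_{2,2n}(\alpha_p)$ has rank two, the row vector $(p^{-k-n},\,p^{-2k-2n+2}\lambda_g(p^2))$ in Theorem~\ref{thm:drei} must match $(\varphi(T_{0,1}(p^2)),\varphi(T_{1,0}(p^2)))$ evaluated at $X_1=\mu_1$; using $\varphi(T_{0,1}(p^2))=p^{-1}X_0^2X_1$ and $\varphi(T_{1,0}(p^2))=p^{-1}X_0^2X_1(pX_1^{-1}+(p-1)+pX_1)$ this gives
\[
p\mu_1^{-1}+(p-1)+p\mu_1=p^{-k-n+2}\lambda_g(p^2)=p\beta_p^2+(p-1)+p\beta_p^{-2},
\]
hence $\mu_1=\beta_p^{\pm2}$. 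So the identification you worried about is immediate from the way $A'_{2,2n}$ was constructed, and no separate verification step is needed.
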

We remark that Theorem~\ref{thm:drei} follows from Corollary~\ref{coro:vier}.
And Corollary~\ref{coro:vier} has already been shown by Ikeda~\cite{Ik} for
more general case, namely for Siegel modular form $g \in S_{k+n}(\Gamma_r)$.
Here we obtained a new proof of Theorem~\ref{thm:drei} and Corollary~\ref{coro:vier}
by using the generalized Maass relation.
%We remark that this corollary has already been shown in~\cite{Ik2} for $g$ of general degrees.

Furthermore, we remark that a certain identity of the spinor $L$-function of $\mathcal{F}_{f,g}$ has
been obtained in~\cite{SpiM}
which is a generalization of the case $(n,k)=(2,12)$ in~\cite{He}.

\vspace{5mm}

This paper is organized as follows:
%In Section~\ref{sec:notation} we will give a notation
In Section~\ref{sec:operators_Jacobi} we give a notation and review some
operators for Jacobi forms,
and in Section~\ref{sec:Jacobi-Eisenstein}
we shall show a certain relation
among Jacobi-Eisenstein series with respect to the index-shift maps.
In Section~\ref{sec:Fourier-Jacobi} we shall prove Theorem~\ref{thm:eins},
while we shall prove Theorem~\ref{thm:phi_maass}, Theorem~\ref{thm:drei}
and Corollary~\ref{coro:vier} in Section~\ref{sec:Miyawaki-Ikeda}.

\vspace{5mm}

Acknowledgement: to be entered later.
%The author would like to express my gratitude to Professor T. Ibukiyama for
%%bringing my attention to this work and
%fruitful discussions.
%This work was supported by JSPS KAKENHI Grant Number 23740018.

%%%%%%%%%%%%%%%%%%%%%%
%\section{Notation and definitions}  \label{sec:notation}
%%%%%%%%%%%%%%%%%%%%%%

%%%%%%%%%%%%%%%%%%%%%%
\section{Operators on Jacobi forms}\label{sec:operators_Jacobi}
%%%%%%%%%%%%%%%%%%%%%%

%%%%
\subsection{Symbols}
%%%%

We denote by $M_{i,j}(R)$ the set of all matrices with entries in the ring $R$
and put $M_n(R) := M_{n,n}(R)$.
For any square matrix $A \in M_{n}(\Z)$ we denote by $\mbox{rank}_p(A)$
the rank of $A$ in $M_{n}(\Z/p\Z)$.
For any two matrices $A \in M_{n}(\Z)$ and $B \in M_{n,m}(\Z)$
we write $A[B]$ for ${^t B} A B$.
The set of all half-integral symmetric matrices of size $n$ is denoted by $\mbox{Sym}_{n}^*$.

We put $J_n := \begin{pmatrix}  0 & - 1_n \\ 1_n & 0 \end{pmatrix}$ and set
\begin{eqnarray*}
 \mbox{GSp}_n^{+}(\R)
 &:=&
 \left\{ M \in M_{2n}(\R) \, | \, M J_{n} {^t M} = \nu(g) J_{n}, \, \nu(g) > 0 \right\},
\end{eqnarray*}
where the number $\nu(g)$ is called the similitude of $g$.

We put $\Gamma_n := \mbox{Sp}_n(\Z) \subset \mbox{SL}_{2n}(\Z)$.
For any square matrix $x$ we set $e(x) := e^{2\pi i\, tr(x)}$, where $\mbox{tr}(x)$ denotes the trace of $x$.
For any natural number $m$ we put $< m > := \frac{m(m+1)}{2}$.

The symbol $\H_n$ denotes the Siegel upper-half space of size $n$.
The action of $\mbox{GSp}_n^{+}(\R)$ on $\H_n$ is given by
$g \cdot \tau := (A\tau+B)(C\tau+D)^{-1}$ for $g = \smat{A}{B}{C}{D} \in \mbox{GSp}_n^{+}(\R)$ and for $\tau \in \H_n$.

%Let $F \in \mbox{Hol}(\H_n\rightarrow \C)$ be a holomorphic function on $\H_n$
%and let $k$ be an integer.
The symbol $\mbox{Hol}(\H_n\rightarrow \C)$ (resp. $\mbox{Hol}(\H_n \times \C^n \rightarrow \C)$)
denotes the space of all holomorphic function on $\H_n$ (resp. $\H_n \times \C^n$.)
For any integer $k$,
we define the slash operator $|_k$ :
\[
 (F|_k g)(\tau) := \det(C\tau+D)^{-k} F(g\cdot \tau),
\]
where $F \in \mbox{Hol}(\H_n\rightarrow \C)$, $g = \smat{A}{B}{C}{D} \in \mbox{GSp}_n^{+}(\R)$ and $\tau \in \H_n$.
By this definition the group $\mbox{GSp}_n^{+}(\R)$ acts on $\mbox{Hol}(\H_n\rightarrow \C)$.
%We denote by $S_k(\Gamma_n)$ the space of Siegel cusp forms of weight $k$
%with respect to $\Gamma_n$.

%%%%%%%%%%%%%
\subsection{Jacobi group}\label{ss:Jacobi_group}
%%%%%%%%%%%%%
We define a subgroup of $\mbox{GSp}_{n+1}^{+}(\R)$:
\begin{eqnarray*}
 G_n^J &:=&
 \left\{
   \gamma \in \mbox{GSp}_{n+1}^{+}(\R)
   \, \left| \,
   \gamma =
    \left(
    \begin{smallmatrix}
      A & 0 & B & * \\
      * & \nu(\gamma) & * & * \\
      C & 0 & D & * \\
      0 & 0 & 0 & 1
    \end{smallmatrix}
    \right),\
    \begin{pmatrix}
      A & B \\ C & D
    \end{pmatrix}
    \in \mbox{GSp}_n^{+}(\R)
 \right\}.
 \right.
\end{eqnarray*}

%As a set
%$G_n^J$
%is equivalent to
%$\mbox{GSp}_{n}^{+}(\R) \times \left(\R^n \times \R^n\right) \times \R$.
%The bijection between $\mbox{GSp}_{n}^{+}(\R) \times \left(\R^n \times \R^n\right) \times \R$
%and $G_n^J$
A bijective map from $\mbox{GSp}_{n}^{+}(\R) \times \left(\R^n \times \R^n\right) \times \R$
to $G_n^J$
is given by
\begin{eqnarray*}
 \left[
   \begin{pmatrix} A&B\\C&D \end{pmatrix}, \left(\lambda, \mu\right), \kappa
 \right]
 \mapsto
   \left(\begin{smallmatrix}
        A & 0 & B & 0 \\
        0 & \nu(g) & 0 & 0 \\
        C & 0 & D & 0 \\
        0 & 0 & 0 & 1
    \end{smallmatrix} \right)
  \left(\begin{smallmatrix}
   1_n          & 0 & 0        & \mu \\
   {^t \lambda} & 1 & {^t \mu} & {^t \lambda}\mu + \kappa \\
   0            & 0 & 1_n      & -\lambda \\
   0            & 0 & 0        & 1
 \end{smallmatrix} \right),
\end{eqnarray*}
where $g = \begin{pmatrix} A & B \\ C & D \end{pmatrix} \in \mbox{GSp}_{n}^{+}(\R)$,
$\lambda,\mu \in \R^n$ and $\kappa \in \R$.
We identify $\mbox{GSp}_{n}^{+}(\R) \times \left(\R^n \times \R^n\right) \times \R$ and $G_n^J$.
By this bijection the group $G_n^J$ can be regarded as a semi-direct product of
$\mbox{GSp}_{n}^{+}(\R)$ and $\left(\left(\R^n \times \R^n\right) \times \R\right)$,
namely $G_n^J \cong \mbox{GSp}_{n}^{+}(\R) \ltimes \left(\left(\R^n \times \R^n\right) \times \R\right)$.

Let $k$ and $m$ be integers and 
let $\phi \in \mbox{Hol}(\H_n \times \C^n \rightarrow \C)$
be a holomorphic function on $\H_n \times \C^n$.
We define the slash operator $|_{k,m}$:
\begin{eqnarray*}
  (\phi|_{k,m}\gamma)(\tau,z)
 &:=&
  ((\phi(\tau,z) e(m\, \omega)) |_k \gamma)\, e(-\nu(\gamma)\, m\, \omega),
\end{eqnarray*}
where $\begin{pmatrix} \tau & z \\ ^t z & \omega \end{pmatrix} \in \H_{n+1}$,
 $\tau \in \H_n$, $\omega \in \H_1$, $z \in \C^n$
and $\gamma \in G_n^J$.
We remark that the RHS of the above definition does not depend
on the choice of $\omega$.
By this definition, the group $G_n^J$ acts on $\mbox{Hol}(\H_n \times \C^n \rightarrow \C)$.
% hence the above definition is well-defined.

For $\gamma = [g, (\lambda,\mu), \kappa] \in G_n^J$ we have
\begin{eqnarray*}
  (\phi|_{k,m}\gamma)(\tau,z)
 &=&
  \det(C\tau+D)^{-k}\, e(-\nu(g)\, m\, ((C\tau+D)^{-1}C)[z+ \tau \lambda+ \mu]) \\
 &&
  \times
  e(\nu(g)\, m\, (^t \lambda \tau \lambda + 2 {^t \lambda} z + 2 {^t \lambda}\mu + \kappa))\\
 &&
  \times
  \phi(g\cdot \tau, \nu(g) {^t (C\tau+D)^{-1}}(z+\tau\lambda+\mu)),
\end{eqnarray*}
where $g = \begin{pmatrix} A&B \\ C&D \end{pmatrix} \in \mbox{GSp}_n^{+}(\R)$.

We put a discrete subgroup $\Gamma_n^J$ of $G_n^J$:
\begin{eqnarray*}
 \Gamma_{n}^J
 &:=&
 \left\{ [M, (\lambda,\mu),\kappa] \in G_n^J \, | \, M \in \Gamma_n, \, (\lambda,\mu) \in \Z^n \times \Z^n,\,  \kappa \in \Z \right\}.
\end{eqnarray*}

We denote by $J_{k,m}^{(n)}$ the space of Jacobi forms
of weight $k$ of index $m$ of degree $n$ (cf. Ziegler~\cite{Zi}.)
For $n > 1$ the space $J_{k,m}^{(n)}$ is defined by
%For the definition of Jacobi forms of higher degree, we refer the reader to \cite{Zi}.
\begin{eqnarray*}
    J_{k,m}^{(n)}
  &:=&
    \left\{ \phi \in \mbox{Hol}(\H_n\times \C^n \rightarrow \C) \, |
            \, \phi|_{k,m}\gamma = \phi \mbox{ for any } \gamma \in \Gamma_{n}^J \right\}.
\end{eqnarray*}

%%%%%%%%%%%%%
\subsection{The standard $L$-functions}\label{ss:standard-L}
%%%%%%%%%%%%%
Let $F \in S_k(\Gamma_n)$ be a Siegel cusp form which is an eigenform for
all Hecke operators.
%with respect to the Hecke pair $(\Gamma_n, \mbox{Sp}_n(\Qq))$.
Let $\{\mu_{0,p},\mu_{1,p},...,\mu_{n,p} \}$ be the Satake parameter of $F$ at prime $p$.
The standard $L$-function of $F$ is defined by
\begin{eqnarray*}
    L(s,F,\mbox{st})
  &:=&
    \prod_{p} \left\{(1-p^{-s}) \prod_{i=1}^{n} (1-\mu_{i,p} p^{-s})(1-\mu_{i,p}^{-1} p^{-s})\right\}^{-1}.
\end{eqnarray*}
In our setting we have
$\mu_{0,p}^2 \mu_{1,p} \cdots \mu_{n,p} = p^{nk-<n>}$.

\subsection{index-shift maps of Jacobi forms}\label{ss:index_shift}
For any function $\phi \in J_{k,m}^{(n)}$ and for any matrix $g \in \mbox{GSp}_{n}^{+}(\R) \cap M_{2n}(\Z)$ we define
\begin{eqnarray*}
  \phi|V(\Gamma_n g \Gamma_n)
  &:=&
  \sum_{i} \phi|_{k,m} [g_i,(0,0),0],
\end{eqnarray*}
where $\Gamma_n g \Gamma_n = \displaystyle{\bigcup_i \Gamma_n g_i}$ is a coset decomposition.
%We remark that we did not set any normalizing factor in the above definition.
It is known that $\phi|V(\Gamma_n g \Gamma_n) $ is well-defined and belongs to $J_{k,\nu(g) m}^{(n)}$.

For any integer $l$ $(0 \leq l \leq n)$,
we define
\begin{eqnarray*}
  \phi|V_{l,n-l}(p^2)
  &:=&
  \phi|V(\Gamma_n \mbox{diag}(1_{l}, p 1_{n-l}, p^2 1_{l}, p 1_{n-l}) \Gamma_n).
\end{eqnarray*}
%We remark that $V_{l,n-l}(p^2)$ is equivalent to the operator $T_{l,n-l}(p^2)$ in \cite{Ya2}.
%In this paper we will use the symbol $T_{l,n-l}(p^2)$ for the Hecke operator acting on
%the space of Siegel modular forms.
For any non-negative integer $d$ we define
\begin{eqnarray*}
  (\phi|U(d))(\tau,z)
  &:=&
  \phi(\tau, d z).
\end{eqnarray*}
Then $\phi|V_{l,n-l}(p^2) \in J_{k,mp^2}^{(n)}$ and $\phi|U(d) \in J_{k,md^2}^{(n)}$.

Let $F$ be a Siegel modular form of weight $k$ of degree $n$.
Let $g$ be an element of $\mbox{GSp}_n^{+}(\R) \cap M_{2n}(\Z)$.
For any double coset $\Gamma_n g \Gamma_n$, the Hecke operator $T(\Gamma_n g \Gamma_n)$ is defined by
\begin{eqnarray*}
  F|T(\Gamma_n g \Gamma_n)
 &:=&
  \nu(g)^{nk-<n>} \sum_i F|_k g_i,
\end{eqnarray*}
where $\displaystyle{\Gamma_n g \Gamma_n = \bigcup_i \Gamma_n g_i}$ is a coset decomposition.
For any integer $l$ $(0 \leq l \leq n)$,
we define
\begin{eqnarray*}
  F|T_{l,n-l}(p^2)
  &:=&
  F|T(\Gamma_n \mbox{diag}(1_{l}, p 1_{n-l}, p^2 1_{l}, p 1_{n-l}) \Gamma_n).
\end{eqnarray*}

For any Jacobi form $\phi \in J_{k,m}^{(n)}$, we define the function
\begin{eqnarray*}
  W\!(\phi)(\tau)
 &:=&
  \phi(\tau,0)
\end{eqnarray*}
for $\tau \in \H_n$.
From the definition of Jacobi forms, it follows that $W\!(\phi)$ is
a Siegel modular form of weight $k$ of degree $n$.

Furthermore, due to the straightforward calculation, we obtain
\begin{eqnarray}\label{eq:W_T_V}
  W\!(\phi)|T(\Gamma_n g \Gamma_n)
 &=&
  \nu(g)^{nk-<n>}
  W\!(\phi|V(\Gamma_n g \Gamma_n))
\end{eqnarray}
for any Jacobi form $\phi \in J_{k,m}^{(n)}$ and for any
$g \in \mbox{GSp}_n^{+}(\R) \cap M_{2n}(\Z)$.

\subsection{Siegel $\Phi$-operator for Jacobi forms}\label{ss:Phi_Jacobi}

Let $\phi \in \mbox{Hol}(\H_n\times \C^n \rightarrow \C)$ be a holomorphic function.
We define the Siegel $\Phi$-operator:
\begin{eqnarray*}
 \Phi(\phi)\left(\tau_1, z_1 \right)
 :=
 \lim_{t \rightarrow + \infty}
   \phi\left(\begin{pmatrix} \tau_1 & 0 \\ 0 & i t \end{pmatrix}, \begin{pmatrix} z_1 \\ 0 \end{pmatrix} \right),
\end{eqnarray*}
where $\tau_1 \in \H_{n-1}$ and $z_1 \in \C^{n-1}$.

It is known that if $\phi \in J_{k,m}^{(n)}$ is a Jacobi form,
then the function $\Phi(\phi)$ is also a Jacobi form which belongs to $J_{k,m}^{(n-1)}$.

%%%%%%
\subsection{The Satake isomorphism and the Siegel $\Phi$-operator}\label{ss:Satake_Siegel_Phi}
%%%%%%

Let $\mathcal{H}_p^n$ be the local Hecke ring with respect to the Hecke pair
$(\Gamma_n, \mbox{GSp}_n^+(\R)\cap M_{2n}(\Z[p^{-1}]))$.
We denote by
 $\C[X_0^{\pm 1}, ... , X_n^{\pm1}]^{W_n}$
the subring of the polynomial ring $\C[X_0^{\pm 1}, ... , X_n^{\pm1}]$
which is invariant under the action of the Weyl group $W_n$ associated to the symplectic group.
The Satake isomorphism $\varphi_n \, : \, \mathcal{H}_p^n \rightarrow \C[X_0^{\pm 1}, ... , X_n^{\pm1}]^{W_n}$
is given by
\begin{eqnarray*}
 \Gamma_n g \Gamma_n =
 \bigcup_i \Gamma_n \begin{pmatrix} p^{l}\, {^t D_i}^{-1} & B_i \\ 0 & D_i \end{pmatrix}
 \mapsto
 X_0^{l }\sum_i  \prod_j \left( \frac{X_j}{p^j} \right)^{l_{i,j}},
\end{eqnarray*}
where
$\nu(g) = p^{l}$
and
$D_i
 = 
 \begin{pmatrix} p^{l_{i,1}} & * & * \\
                 & \ddots & * \\ & & p^{l_{i,n}} \end{pmatrix}$
(cf. Andrianov~\cite{An}.)

We write $\varphi = \varphi_n$ for simplicity.
In this article we consider the subring of $\mathcal{H}_p^n$ which is generated by
$T_{0.n}(p^2)^{\pm1}$ and $T_{l,n-l}(p^2)$ $(l = 1,...,n)$.
%\todo{$\mathcal{H}_{p^2}^n$ is necessary?}

The following proposition follows from~\cite[Satz]{Kr}.
\begin{prop}\label{prop:krieg}
If $n \geq 2$ we have
\begin{eqnarray*}
 \varphi(T_{n,0}(p^2))
 &=&
 X_n \left\{\left(X_{n}^{-1} + (p-1)p^{-1} + X_{n}\right) \varphi(T_{n-1,0}(p^2)) \right. \\
 &&
 + \left. (p^2 - 1) p^{-1} \varphi(T_{n-2,1}(p^2))
 \right\}, \\
 \varphi(T_{1,n-1}(p^2))
 &=&
 X_n \left\{ p^{1-n}\, \varphi(T_{1,n-2}(p^2)) \right. \\
 &&
 + \left. \left( X_{n}^{-1} + (p-1)p^{-n} + X_{n} \right) \varphi(T_{0,n-1}(p^2)) \right\},\\
 \varphi(T_{0,n}(p^2))
 &=&
  X_n \left\{ p^{-n} \, \varphi(T_{0,n-1}(p^2)) \right\},
\end{eqnarray*}
and for $1 < j < n$ we have 
\begin{eqnarray*}
 \varphi(T_{j,n-j}(p^2))
 &=&
 X_{n} \left\{
 p^{j-n}  \varphi(T_{j,n-j-1}(p^2)) \right. \\
 &&
 + \left( X_{n}^{-1} + p^{j-n-1}(p-1) + X_{n} \right) \varphi(T_{j-1,n-j}(p^2)) \\
 &&
 + \left. (p^{2n-2j+2}-1) p^{j-n-1} \varphi(T_{j-2,n-j+1}(p^2)) \right\}.
\end{eqnarray*}
\end{prop}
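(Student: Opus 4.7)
The plan is to derive all four recursions uniformly from a careful analysis of coset representatives of $\Gamma_n g_j \Gamma_n$, with $g_j := \mbox{diag}(1_{j}, p 1_{n-j}, p^{2} 1_{j}, p 1_{n-j})$, grouped according to the exponent of the $(n,n)$-entry of the triangular part. This is essentially the strategy underlying the Satz in~\cite{Kr}.

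First I would write each coset representative in the standard Iwasawa form $h = \smat{p^{2}\,{^{t}D}^{-1}}{B}{0}{D}$ with $D$ upper triangular having diagonal $(p^{l_{1}}, \ldots, p^{l_{n}})$, $l_{k} \in \{0,1,2\}$, and with $B$ ranging over an explicit complete set of residues compatible with the symplectic symmetry condition $D \,{^{t}B} = B \,{^{t}D}$. Under $\varphi = \varphi_{n}$ each such $h$ contributes the monomial $X_{0}^{2}\prod_{k=1}^{n}(X_{k}/p^{k})^{l_{k}}$, so $X_{n}$ enters only through the factor $(X_{n}/p^{n})^{l_{n}}$. This singles out $l_{n}$ as the natural variable along which to partition the coset sum.

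The second step is to group the representatives by the value $a := l_{n} \in \{0,1,2\}$. For each fixed $a$, the top-left $(n-1) \times (n-1)$ block of $h$ together with the upper-left $(n-1) \times (n-1)$ part of $B$ forms the Iwasawa form of a coset representative for a double coset $\Gamma_{n-1}\, g_{j'}^{(n-1)} \Gamma_{n-1}$ in the smaller Hecke ring, with $j'$ determined by $(j,a)$ via the ``removal'' of the last diagonal block (informally: $a=0$ removes a $1$ from the diagonal of $g_{j}$, $a=1$ removes a $p$, $a=2$ removes a $p^{2}$). Enumerating the freedom in the last row of $D$ and in the last row and column of $B$ for each $a$ produces, after applying $\varphi_{n-1}$ to the top-left block, a term of the form $(X_{n}/p^{n})^{a}$ times an explicit combinatorial factor in $p$ times $\varphi_{n-1}(T_{j',n-1-j'}(p^{2}))$. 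Collecting the contributions from $a \in \{0,1,2\}$ and factoring out the common $X_{n}$ yields the four claimed recursions; the corner cases $j \in \{0,1,n\}$ correspond to values of $a$ that cannot be realised in the given double coset and to the convention that $T_{-1,*}(p^{2})$ and $T_{*,-1}(p^{2})$ are absent.

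The main technical obstacle is the combinatorial bookkeeping that produces the scalar coefficients $(p-1)p^{j-n-1}$ and $(p^{2n-2j+2}-1)p^{j-n-1}$ attached to the middle terms and to the $\varphi(T_{j-2,n-j+1}(p^{2}))$ term. These arise from counting (i) the admissible last column of $D$ compatible with the Hermite-normal-form convention, (ii) the $p^{n-j}$ or $p^{j}$ residue classes available for the last row of $B$ when a new $p$- or $p^{2}$-block is inserted, and (iii) the $\mbox{GL}_{n-1}(\Z/p\Z)$-orbit counts arising from the overcount when rank-$(n-1)$ cosets are completed to rank-$n$ cosets. Once these multiplicities are tabulated, the statement follows by direct algebraic manipulation of $X_{0}^{2}\prod_{k}(X_{k}/p^{k})^{l_{k}}$ and of the Satake image at level $n-1$.
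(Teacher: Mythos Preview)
Your outline is correct in spirit and is indeed the strategy behind Krieg's Satz in~\cite{Kr}: partition the standard upper-triangular representatives by the exponent $l_n$ at the $(n,n)$-position, recognise the remaining block as a representative for a level-$(n-1)$ double coset, and count the extra freedom in the last row/column. The combinatorics you describe (last column of $D$, residues for the last row of $B$, orbit overcounts) do account for the factors $(p-1)p^{j-n-1}$ and $(p^{2n-2j+2}-1)p^{j-n-1}$, although in your write-up this remains an announcement rather than a computation.

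The paper, however, does not redo any of this. Its proof is a single sentence: the stated identities are exactly the Satz of~\cite{Kr}, which is formulated for the Siegel $\Phi$-operator acting on weight-$k$ forms and therefore has a numerical quantity (written $p^{-r}$ in Krieg) in place of your Satake variable. Passing from Krieg's formulation to the Satake-polynomial identity amounts to replacing $p^{-r}$ by $p^{-n}X_n$, since the $\Phi$-operator specialises $X_n$ to a power of $p$ determined by the weight. So the paper treats the proposition as a known result requiring only a change of variables, whereas you are proposing to re-derive that known result from scratch. Both routes are valid; yours is self-contained but longer, while the paper's buys the result immediately from the literature.
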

\begin{proof}
We obtain this proposition by replacing $p^{-r}$ in~\cite[Satz]{Kr} by $p^{-n} X_n$.
For the detail the reader is referred to \cite[Satz]{Kr}.
\end{proof}

Now for integers $l$ $(2 \leq l)$, $t$ $(0 \leq t \leq l)$, $j$ $(0 \leq j \leq l)$, we put
\begin{eqnarray*}
 b_{t,j} := b_{t,j,l,p}(X_l) =
 \begin{cases}
  (p^{2l-2j+2}-1)p^{j-1-l} X_l & \mbox{ if } t = j - 2,\\
  1 + p^{j-1-l}(p-1) X_l + X_l^2 & \mbox{ if } t = j - 1,\\
  p^{-l+j} X_l & \mbox{ if } t = j, \\
  0 & \mbox{ otherwise},
 \end{cases}
\end{eqnarray*}
and we put a matrix
\begin{eqnarray*}
  B_{l,l+1}(X_l)
   &:=& \left(b_{t,j} \right)_{\begin{smallmatrix} t = 0,...,l-1 \\ j = 0,...,l \end{smallmatrix}}
    = \begin{pmatrix} b_{0,0} & \cdots &b_{0,l} \\
                                    \vdots & \cdots & \vdots \\
                                    b_{l-1,0} & \cdots & b_{l-1,l} \end{pmatrix}
\end{eqnarray*}
with entries in $\C[X_l,X_l^{-1}]$.
From Proposition~\ref{prop:krieg} and from the definition of $B_{l,l+1}(X_l)$,
we have the identity:
\begin{eqnarray*}
 (\varphi(T_{0,l}(p^2)) , ... , \varphi(T_{l,0}(p^2)))
 &=&
 (\varphi(T_{0,l-1}(p^2)), ... , \varphi(T_{l-1,0}(p^2)))
 B_{l,l+1}(X_l).
\end{eqnarray*}

For Jacobi forms we obtain the following lemma.
\begin{lemma}\label{lem:Phi_Tauschen}
Let $\phi$ $\in$ $J_{k,m}^{(l)}$ be a Jacobi form
such that $\Phi(\phi)$ is not identically zero.
Then we have
\begin{eqnarray*}
    \Phi(\phi|(V_{0,l}(p^2),...,V_{l,0}(p^2)))
  &=&
    \left(\Phi(\phi)|(V_{0,l-1}(p^2),...,V_{l-1,0}(p^2)) \right) B_{l,l+1}(p^{l-k}),
\end{eqnarray*}
where we put $\phi|(V_{0,l}(p^2),...,V_{l,0}(p^2)) := (\phi|V_{0,l}(p^2),...,\phi|V_{l,0}(p^2))$.
\end{lemma}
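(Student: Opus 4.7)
The identity is the Jacobi-form counterpart of Krieg's relation in Proposition~\ref{prop:krieg} specialized at $X_l = p^{l-k}$. My plan is to establish it by a direct coset computation that mirrors Krieg's argument while carrying along the Jacobi automorphy factor, and by tracking which left-coset representatives survive the limit defining the Siegel $\Phi$-operator.

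First I would choose, for each $j \in \{0, \ldots, l\}$, a complete set of left coset representatives $g_{i,j} = \left(\begin{smallmatrix} p^2\,{}^tD_{i,j}^{-1} & B_{i,j} \\ 0 & D_{i,j}\end{smallmatrix}\right)$ (with $D_{i,j}$ upper triangular and in reduced form) for the double coset $\Gamma_l\,\mbox{diag}(1_{j}, p 1_{l-j}, p^2 1_{j}, p 1_{l-j})\Gamma_l$. Using the explicit formula for the slash operator $|_{k,m}$ recalled in Section~\ref{ss:Jacobi_group}, I would then evaluate $\phi|_{k,m}[g_{i,j},(0,0),0]$ at $\tau = \mbox{diag}(\tau_1, it)$, $z = {}^t(z_1, 0)$ and pass to the limit $t \to +\infty$.

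The core observation is that $g_{i,j}$ contributes to $\Phi$ only when the last diagonal entry $d \in \{1, p, p^2\}$ of $D_{i,j}$, together with the last row of $B_{i,j}$, has a constrained shape that lets the $it$-direction separate cleanly from the remaining degree-$(l-1)$ block. Grouping the surviving representatives by the value of $d$, the three cases $d = 1, p, p^2$ produce respectively left-coset representatives of the degree $l-1$ operators $V_{j,l-1-j}(p^2)$, $V_{j-1,l-j}(p^2)$, $V_{j-2,l+1-j}(p^2)$ acting on $\Phi(\phi)$. Meanwhile the automorphy factor $\det(C\tau+D)^{-k}$ in $|_{k,m}$ restricted to these survivors contributes a scalar $d^{-k}$, which combined with the powers of $p$ coming from the other block sizes of $D_{i,j}$ and from the Jacobi automorphy factor gives exactly the specialization $X_l \mapsto p^{l-k}$ of the formal variable in $B_{l,l+1}(X_l)$.

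The main obstacle is bookkeeping: for each $j$, verifying that the multiplicity of surviving representatives with a given $d$, together with the weight factor $d^{-k}$, matches the entries $b_{j,j}$, $b_{j-1,j}$, $b_{j-2,j}$ of $B_{l,l+1}(p^{l-k})$, including the $q$-analogue coefficients $(p^{2l-2j+2}-1)$ and $(p-1)$. Since the combinatorial structure of these coset representatives is the same as in the proof of~\cite[Satz]{Kr}, the count works out column by column, and the lemma then follows from the definitions of $\Phi$ and $V_{j,l-j}(p^2)$.
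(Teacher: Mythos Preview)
Your proposal is correct and follows essentially the same route as the paper: the paper's proof consists of the single observation that for an upper-triangular representative $\gamma = \left[\begin{pmatrix} A & B \\ 0 & D \end{pmatrix},(0,0),0\right]$ with $D = \begin{pmatrix} D^* & \mathfrak{d} \\ 0 & d \end{pmatrix}$ one has $\Phi(\phi|_{k,m}\gamma) = d^{-k}\,\Phi(\phi)|_{k,m}\gamma^*$, after which the coset combinatorics are identical to the Siegel case in~\cite[Satz]{Kr}. One small clarification: since $C=0$ and the Heisenberg part is trivial, \emph{every} upper-triangular representative survives the limit defining $\Phi$ (none vanish); the bookkeeping is purely about the multiplicities with which they collapse onto degree-$(l-1)$ representatives, weighted by $d^{-k}$, exactly as in Krieg.
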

\begin{proof}
Let $\gamma = \left[\begin{pmatrix} A&B \\ 0 & D \end{pmatrix}, (0,0),0 \right] \in G_l^J$
with $A = \begin{pmatrix} A^*& 0 \\ \mathfrak{a}  & a \end{pmatrix}$,
$B = \begin{pmatrix} B^*& \mathfrak{b}_1 \\ \mathfrak{b}_2 & b \end{pmatrix}$,
$D = \begin{pmatrix} D^*& \mathfrak{d} \\ 0 & d \end{pmatrix}$,
where $A^*$, $D^* \in \mbox{GL}_{l-1}(\R)$ and $B^* \in M_{l-1}(\R)$.
Then
\begin{eqnarray*}
    \Phi( \phi |_{k,m} \gamma)
  &=&
   d^{-k} \Phi(\phi) |_{k,m} \gamma^*,
\end{eqnarray*}
where $\gamma^* = \left[\begin{pmatrix} A^*& B^* \\ 0 & D^* \end{pmatrix},(0,0),0 \right]
\in G_{l-1}^J$.

The rest of the proof of this lemma is the same to the case of Siegel modular forms (cf.~\cite[Satz]{Kr}.)
Thus we conclude this lemma.
\end{proof}

We define a matrix
\begin{eqnarray*}
    B_{2,n+1}(X_2,X_3,...,X_n)
  &:=& \displaystyle{\prod_{l=2}^n B_{l,l+1}(X_l)},
\end{eqnarray*}
which entries are in $\C[X_2^{\pm},...,X_n^{\pm}]$.
Then we have
\begin{eqnarray*}
  (\varphi(T_{0,n}(p^2)) , ... , \varphi(T_{n,0}(p^2)))
  &=&
  (\varphi(T_{0,1}(p^2)),\varphi(T_{1,0}(p^2))) B_{2,n+1}(X_2,...,X_n).
\end{eqnarray*}

The precise expression of $\varphi(T_{l,n-l}(p^2))$ by using the elementary symmetric polynomials
has been given in~\cite[Korollar 2]{Kr}.

To explain our results we define two matrices $A_{2,n+1}^{p,k}$ and $A'_{2,2n}(X_p)$.
First we define a $2 \times (n+1)$ matrix
\begin{eqnarray*}
    A_{2,n+1}^{p,k}
 &:=&
    B_{2,n+1}(p^{2-k},p^{3-k},...,p^{n-k}).
\end{eqnarray*}
We remark that the matrix $A_{2,n+1}^{p,k}$ 
depends only on the prime $p$ and the integer $k>0$.

We set a $2\times 2n$ matrix
\begin{eqnarray*}
 B'_{2,2n}(X_2,...,X_{2n-1})
 &:=&
 \left(\prod_{i=2}^{2n-1} X_i\right)^{-1} B_{2,2n}(X_2,...,X_{2n-1}).
\end{eqnarray*}
From the definition of $B_{2,2n}(X_2,...,X_{2n-1})$
it is not difficult to see that the entries in
the matrix $B'_{2,2n}(X_2,...,X_{2n-1})$
belong to $\C[X_2+X_2^{-1},...,X_{2n-1}+X_{2n-1}^{-1}]$.
We define a $2\times 2n$ matrix
 \begin{eqnarray*}
     A'_{2,2n}(X_p)
   &:=&
     B'_{2,2n}(p^{\frac32-n} X_p, p^{\frac52-n} X_p ,..., p^{-\frac32+n} X_p).
 \end{eqnarray*}
In Section~\ref{ss:pr_of_thm2} we will show $A'_{2,2n}(X_p) = A'_{2,2n}(X_p^{-1})$.

%%%%%%%%%%%%%%%%%%%%%%
\section{Jacobi-Eisenstein series}\label{sec:Jacobi-Eisenstein}
%%%%%%%%%%%%%%%%%%%%%%

The goal of this section is to show a certain relation
among Jacobi-Eisenstein series with respect to
the index-shift maps $V_{l,n-l}(p^2)$ $(l=0,...,n)$.
%The symbol $p$ is always reserved for a prime number.

\subsection{Definition of Jacobi-Eisenstein series} %$E_{k,m}^{(n)}$}

For integers $k$, $m$ and $n$, we define the Jacobi-Eisenstein series
of weight $k$ of index $m$ of degree $n$ by
\begin{eqnarray*}
  E_{k,m}^{(n)}(\tau,z)
  &:=&
  \sum_{\gamma \in \Gamma_{n,0}^J \backslash \Gamma_{n}^J}
    (1|_{k,m} \gamma),
\end{eqnarray*}
where we put
\begin{eqnarray*}
 \Gamma_{n,0}^J
  &:=&
 \left\{
  \left(\begin{smallmatrix} A & 0 & B & 0 \\ 0 & 1 & 0 & 0 \\ 0 & 0 & D & 0 \\ 0 & 0 & 0 & 1 \end{smallmatrix} \right)
  \left(\begin{smallmatrix} 1_n & 0 & 0 & \mu \\ 0 & 1 & {^t \mu} & \kappa \\ 0 & 0 & 1_n & 0 \\ 0 & 0 & 0 & 1 \end{smallmatrix} \right)
 \in \Gamma_{n}^J \, \left| \, \begin{pmatrix} A & B \\ 0 & D \end{pmatrix} \in \Gamma_n, \mu \in \Z^n, \kappa \in \Z \right. \right\}.
\end{eqnarray*}
It is known that if $k > n + 2$, then $E_{k,m}^{(n)}$ converges and belongs to $J_{k,m}^{(n)}$ (cf. Ziegler~\cite{Zi}.)

The purpose of this section is to show that $E_{k,m}^{(n)}|V_{l,n-l}(p^2)$
is a linear combination of three forms $E_{k,\frac{m}{p^2}}^{(n)}|U(p^2)$,
$E_{k,m}^{(n)}|U(p)$ and $E_{k,mp^2}^{(n)}$.

\begin{lemma}\label{lemma:linear_independent}
Let $m$ and $n$ be positive integers.
Then the forms $\left\{ E_{k,\frac{m}{d^2}}^{(n)}|U(d) \right\}_d$ are linearly independent,
where $d$ runs over all positive integers such that $d^2 | m$.
\end{lemma}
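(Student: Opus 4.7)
The plan is to extract the coefficients $\lambda_d$ from a hypothetical linear relation by comparing Fourier expansions. For $\phi \in J_{k,m'}^{(n)}$, write $\phi(\tau,z) = \sum_{T,r} c_\phi(T,r)\,e(T\tau)\,e({}^t r z)$ with $T \in \mathrm{Sym}_n^*$ and $r \in \Z^n$. Since $(\phi|U(d))(\tau,z) = \phi(\tau, dz)$, the Fourier coefficient of $E_{k,m/d^2}^{(n)}|U(d)$ at $(T,r)$ equals $c_{m/d^2}(T, r/d)$ when every component of $r$ is divisible by $d$ and vanishes otherwise (writing $c_{m'}$ for the Fourier coefficients of $E_{k,m'}^{(n)}$). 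A hypothetical relation $\sum_{d^2 \mid m}\lambda_d\,E_{k,m/d^2}^{(n)}|U(d)=0$ therefore yields, at every $(T,r)$,
\[
\sum_{d\,:\,d \mid \gcd(r_1,\dots,r_n)} \lambda_d\, c_{m/d^2}(T, r/d) \;=\; 0.
\]

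I would then proceed by induction on the divisibility partial order of $\{d > 0 : d^2 \mid m\}$. For $(T,r)$ with $\gcd(r_1,\dots,r_n)=1$, only $d=1$ contributes, so the relation reduces to $\lambda_1\,c_m(T,r)=0$. Having established $\lambda_d = 0$ for every proper divisor $d$ of a given $d_0$, the relation at any $(T,r)$ with $\gcd(r_1,\dots,r_n)=d_0$ simplifies to $\lambda_{d_0}\,c_{m/d_0^2}(T,r/d_0)=0$ with $r/d_0$ primitive. Thus the lemma reduces to the following non-vanishing claim: for every positive integer $m'$, $E_{k,m'}^{(n)}$ admits at least one non-zero Fourier coefficient at some $(T^*,r^*)$ with $\gcd(r^*)=1$.

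To establish this non-vanishing I would use the Siegel $\Phi$-operator to reduce to the degree-one case. The standard identity $\Phi(E_{k,m'}^{(n)}) = E_{k,m'}^{(n-1)}$ (valid for $k>n+2$), combined with the explicit formula for $\Phi$, yields the compatibility $c_{m'}^{(n-1)}(T_1,r_1) = c_{m'}^{(n)}\!\left(\mathrm{diag}(T_1,0),\,{}^t(r_1,0)\right)$ on Fourier coefficients. Iterating $n-1$ times, any non-zero Fourier coefficient of $E_{k,m'}^{(1)}$ at some $(n_0,\varepsilon)$ with $\varepsilon=\pm 1$ lifts to a non-zero Fourier coefficient of $E_{k,m'}^{(n)}$ at $\bigl(\mathrm{diag}(n_0,0,\dots,0),\,{}^t(\varepsilon,0,\dots,0)\bigr)$, whose $r$-vector has $\gcd$ equal to $1$.

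The main obstacle is therefore the degree-one claim, namely that $E_{k,m'}^{(1)}$ has a non-zero Fourier coefficient at some $(n_0,\pm 1)$. This is a consequence of the classical explicit formula for the Fourier coefficients of the degree-one Jacobi--Eisenstein series (Eichler--Zagier for $m'=1$, and its extensions to arbitrary index by, e.g., Gross--Kohnen--Zagier, Arakawa, or B\"ocherer), which expresses $c_{m'}^{(1)}(n_0,\pm 1)$ as the product of an elementary factor with a positive class-number sum or $L$-value, non-vanishing for $n_0$ chosen so that, for instance, $4n_0 m'-1$ is a fundamental discriminant coprime to $m'$.
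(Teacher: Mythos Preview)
Your proof is correct and follows essentially the same approach as the paper: both arguments exploit that the $(T,r)$-th Fourier coefficient of $E_{k,m/d^2}^{(n)}|U(d)$ vanishes unless $d\mid r$, use the Siegel $\Phi$-operator identity $\Phi(E_{k,m'}^{(n)})=E_{k,m'}^{(n-1)}$ to reduce to degree one, and then invoke the non-vanishing of the Fourier coefficients of $E_{k,m'}^{(1)}$ from Eichler--Zagier. The only difference is organizational---the paper applies $\Phi$ first to reduce the entire linear-independence statement to $n=1$, whereas you carry out the triangular induction in degree $n$ and apply $\Phi$ only to establish the non-vanishing input; the paper's ordering is slightly more economical since $\Phi$ commutes with $U(d)$.
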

\begin{proof}
Let $\Phi$ be the Siegel $\Phi$-operator
for Jacobi forms introduced in Section~\ref{ss:Phi_Jacobi}.
It follows from the definition that $\Phi(E_{k,m}^{(n)}) = E_{k,m}^{(n-1)}$.
Hence it is enough to show that
the forms $\left\{ E_{k,\frac{m}{d^2}}^{(1)}|U(d) \right\}_d$
are linearly independent.

Let $E_{k,m}^{(1)}(\tau,z) = \sum_{n',r} c(n',r)\, e(n'\tau + r z)$ be the Fourier expansion
of $E_{k,m}^{(1)}$.
We call $c(n',r)$ the $(n',r)$-th Fourier coefficient of $E_{k,m}^{(1)}$.
Let $n' > 0$ and $r \geq 0$ be integers such that $4n' m - r^2 > 0$.
Then it is known that the $(n',r)$-th Fourier coefficient of $E_{k,m}^{(1)}$ is not zero
(cf. Eichler-Zagier~\cite[p.17--p.20]{EZ}.)
On the other hand, for any $d > 1$ such that $d^2 | m$,
the $(n',r)$-th Fourier coefficient
of $E_{k,\frac{m}{d^2}}^{(1)}|U(d)$ is zero unless $d|r$.
Therefore we obtain this lemma.
\end{proof}

\subsection{Definition of a form $K_{i,j}^{(n)}$}\label{ss:def_Kij}

We quote some symbols from~\cite{Ya2}.
For a fixed prime $p$ and for $0 \leq i \leq j \leq n$, we put
\begin{eqnarray*}
 \delta_{i,j} := \mbox{diag}(1_{i}, p 1_{j-i}, p^2 1_{n-j})
\end{eqnarray*}
and
\begin{eqnarray*}
 \delta_{i} := \delta_{i,n} = \mbox{diag}(1_{i}, p 1_{n-i}).
\end{eqnarray*}
And for $x = \mbox{diag}(0_{i}, x_{2,2}, 0_{n-j})$ with $x_{2,2} = {^t x_{2,2}} \in M_{j-i}(\Z)$ we set 

\begin{eqnarray*}
  \delta_{i,j}(x) := \begin{pmatrix} p^2 \delta_{i,j}^{-1} & x \\ 0_n & \delta_{i,j} \end{pmatrix}.
\end{eqnarray*}

We denote by $\Gamma_{n,0}$ the set of all matrices $\begin{pmatrix} A & B \\ 0_n & D \end{pmatrix}$
in $\Gamma_n$.
We set
\begin{eqnarray*}
 \Gamma(\delta_{i,j})
 &:=&
 \left. \left\{
    \begin{pmatrix}
        A   & B \\
        0_n & D
    \end{pmatrix} \in \Gamma_{n,0}
    \, \right| \,
    A \in \delta_{i,j} \mbox{GL}_n(\Z) \, \delta_{i,j}^{-1}
 \right\},\\
  \Gamma(\delta_{i})
 &:=&
 \left. \left\{
    \begin{pmatrix}
        A   & B \\
        0_n & D
    \end{pmatrix} \in \Gamma_{n,0}
    \, \right| \,
    A \in \delta_{i} \mbox{GL}_n(\Z) \, \delta_{i}^{-1}
 \right\}
\end{eqnarray*}
and put a subgroup $\Gamma(\delta_{i,j}(x))$ of $\Gamma(\delta_{i,j})$:
\begin{eqnarray*}
 \Gamma(\delta_{i,j}(x))
 &:=&
 \Gamma_n \cap (\delta_{i,j}(x)^{-1} \Gamma_{n,0}\, \delta_{i,j}(x)).
\end{eqnarray*}

For $\lambda \in \Z^n$ and for $M \in \mbox{GSp}_n^+(\R)\cap M_{2n}(\Z)$ we put
\begin{eqnarray*}
 j(k,m;M,\lambda)(\tau,z)
   &:=&
 (1|_{k,m}[1_{2n},(\lambda,0),0] [M,(0,0),0])(\tau,z).
\end{eqnarray*}

For two matrices $x = \mbox{diag}(0_i,x_{2,2},0_{n-j})$ and $y = \mbox{diag}(0_j,y_{2,2},0_{n-j})$ such that
$x_{2,2} = {^t x_{2,2}}$, $y_{2,2} = {^t y_{2,2}}$ $\in M_{j-i}(\Z)$,
we say they are equivalent and write $[x] = [y]$,
if there exists a matrix
$
   u =
    \begin{pmatrix}
      u_{1,1}     & u_{1,2} & u_{1,3} \\
      p\, u_{2,1} & u_{2,2} & u_{2,3} \\
      p^2 u_{3,1} & p\, u_{3,2} & u_{3,3}
    \end{pmatrix} \in \delta_{i,j}\mbox{GL}_n(\Z) \delta_{i,j}^{-1} \cap \mbox{GL}_n(\Z)
$
which satisfies $u_{2,2} x_{2,2} {^t u_{2,2}} \equiv y_{2,2} \!\! \mod$ $p$,
where $u_{2,2} \in M_{j-i}(\Z)$, $u_{1,1} \in M_i(\Z)$ and $u_{3,3} \in M_{n-j}(\Z)$.

We define a function $K_{i,j}^{\alpha}$ on $(\tau,z) \in \H_n \times \C^n$ by
\begin{eqnarray*}
 K_{i,j}^{\alpha}
 &:=&
 K_{i,j,m,p}^{\alpha}(\tau,z)
 \ = \
   \sum_{\begin{smallmatrix} [x] \\ rank_p(x) = \alpha \end{smallmatrix}}
   \sum_{M \in \Gamma(\delta_{i,j}(x))\backslash \Gamma_n}
   \sum_{\lambda \in \Z^n} j(k,m; \delta_{i,j}(x) M,\lambda)(\tau,z),
\end{eqnarray*}
where in the first summation in the RHS, $[x]$ runs over all equivalence classes
which satisfy $\mbox{rank}_p(x) = \alpha$.

\begin{prop}[Yamazaki \cite{Ya2}]\label{prop:Ya2_decomposition}
 The double coset $\Gamma_n \begin{pmatrix} \delta_l & 0_n \\ 0_n & p^2 \delta_l^{-1} \end{pmatrix} \Gamma_n$
 is a disjoint union
 \begin{eqnarray*}
     \Gamma_n \begin{pmatrix} \delta_l & 0_n \\ 0_n & p^2 \delta_l^{-1} \end{pmatrix} \Gamma_n
   &=&
     \bigcup_{\begin{smallmatrix} i,j \\ 0 \leq i \leq j \leq n \end{smallmatrix}}
     \bigcup_{\begin{smallmatrix} [x] \\ rank_p(x) = l-n-i+j \end{smallmatrix}}
      \Gamma_{n,0} \delta_{i,j}(x) \Gamma_n,
 \end{eqnarray*}
 where in the last union in the RHS, $[x]$ runs over all equivalence classes
 which satisfy $\mbox{rank}_p(x) = l-n-i+j$.
\end{prop}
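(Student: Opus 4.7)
The plan is to carry out the standard parabolic/elementary-divisor analysis of the double coset $\Gamma_n M \Gamma_n$ with $M = \smat{\delta_l}{0_n}{0_n}{p^2\delta_l^{-1}}$ (of similitude $p^2$), identifying its right $\Gamma_{n,0}$-cosets up to right $\Gamma_n$-action.

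First I would normalize a representative. By the standard parabolic decomposition, any element of $\Gamma_n M \Gamma_n$ is congruent modulo $\Gamma_{n,0}$ on the left and $\Gamma_n$ on the right to a matrix $\smat{A}{B}{0_n}{D}$ with $A\,{^t D} = p^2 \cdot 1_n$ and $A\,{^t B}$ symmetric. Since the elementary divisors of $A$ then divide $p^2$, there exist unique $0 \le i \le j \le n$ with $A \in \mathrm{GL}_n(\Z)\,\delta_{i,j}\,\mathrm{GL}_n(\Z)$; absorbing one $\mathrm{GL}_n(\Z)$-factor into $\Gamma_{n,0}$ via $\mathrm{diag}(u, {^t u}^{-1})$ and the other into the right $\Gamma_n$-action reduces us to $A = \delta_{i,j}$ and hence $D = p^2\delta_{i,j}^{-1}$.

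Next I would analyze the freedom in $B$. Splitting $B$ into nine blocks according to the partition $(I_1, I_2, I_3) = (\{1,\ldots,i\},\{i{+}1,\ldots,j\},\{j{+}1,\ldots,n\})$, the symmetry condition $\delta_{i,j}\,{^t B} = B\,\delta_{i,j}$ together with integrality and right multiplication by $\smat{1_n}{S}{0}{1_n} \in \Gamma_n$ (with $S$ symmetric integral) kills every block of $B$ except the $(I_2, I_2)$-block modulo $p$. The remaining datum is a symmetric matrix $x_{2,2} \in M_{j-i}(\Z/p\Z)$, giving a representative $\delta_{i,j}(x)$ with $x = \mathrm{diag}(0_i, x_{2,2}, 0_{n-j})$. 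The residual left $\Gamma_{n,0}$-stabilizer $\delta_{i,j}\mathrm{GL}_n(\Z)\delta_{i,j}^{-1} \cap \mathrm{GL}_n(\Z)$ acts on $x_{2,2}$ through its $(2,2)$-block by $x_{2,2} \mapsto u_{2,2}\,x_{2,2}\,{^t u_{2,2}} \bmod p$, which is exactly the relation $[x] = [y]$ of Section~\ref{ss:def_Kij}; hence distinct $[x]$-classes give disjoint pieces $\Gamma_{n,0}\,\delta_{i,j}(x)\,\Gamma_n$ inside $\Gamma_n M \Gamma_n$.

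Finally I would match the rank condition $\mathrm{rank}_p(x) = l - n - i + j$. A Smith normal form computation on $\delta_{i,j}(x)$ (viewed as a plain $2n \times 2n$ integer matrix) shows that at $x = 0$ the elementary divisors are $\{1^{\,i+n-j},\, p^{\,2(j-i)},\, (p^2)^{\,i+n-j}\}$, while those of $M$ are $\{1^l,\, p^{\,2(n-l)},\, (p^2)^l\}$. Each unit entry in the mod-$p$ reduction of $x_{2,2}$ pivots against the $p$-diagonal of $\delta_{i,j}$ in $I_2$ and the matching $p$-row of $p^2\delta_{i,j}^{-1}$, converting a pair of $p$-divisors into one $1$-divisor and one $p^2$-divisor (preserving the overall determinant $p^{2n}$). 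After $\alpha := \mathrm{rank}_p(x_{2,2})$ such pivots the divisors become $\{1^{\,i+n-j+\alpha},\, p^{\,2(j-i-\alpha)},\, (p^2)^{\,i+n-j+\alpha}\}$, matching those of $M$ precisely when $\alpha = l - n - i + j$. The main obstacle is this last Smith-form bookkeeping: one must verify that the units of $x_{2,2}$ pivot as described with no interaction, which is cleanest to establish by induction on $\mathrm{rank}_p(x_{2,2})$, and that the resulting $(i,j,[x])$-triples exhaust the double coset without overlap.
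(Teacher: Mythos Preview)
The paper does not actually give a proof here; it simply cites \cite[Corollary~2.2]{Ya2}. Your sketch is a correct outline of the standard elementary-divisor argument for this decomposition, and it is essentially the argument Yamazaki carries out. Two small remarks: (i) you normalize the upper-left block to $\delta_{i,j}$, whereas the paper's $\delta_{i,j}(x)$ has upper-left block $p^2\delta_{i,j}^{-1}$; this is only the reindexing $(i,j)\leftrightarrow(n-j,n-i)$ and changes nothing of substance. (ii) In your disjointness step it is worth making explicit the observation that if two upper-triangular representatives lie in the same $(\Gamma_{n,0},\Gamma_n)$-double coset then the connecting $\gamma\in\Gamma_n$ on the right is forced to lie in $\Gamma_{n,0}$ (since $D$ is nonsingular), which is what makes the $\mathrm{GL}_n(\Z)$-elementary divisors of $A$ a genuine invariant and justifies that the residual action on $x_{2,2}$ is exactly the equivalence relation of Section~\ref{ss:def_Kij}.
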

\begin{proof}
%It follows from~\cite[Proposition 2.1]{Ya2} and from identities
%  $\mbox{rank}_p \left( \begin{pmatrix} \delta_l & 0_n \\ 0_n & p^2 \delta_l^{-1} \end{pmatrix} \right)
%    = l$
%  and
%  $\mbox{rank}_p\left( \delta_{i,j}(x) \right) = i + (n-j) + \mbox{rank}_p(x)$.
This proposition has been shown in~\cite[Corollary 2.2]{Ya2}.
\end{proof}

\begin{lemma}\label{lem:E_K}
We obtain
\begin{eqnarray*}
   E_{k,m}^{(n)} | V_{l,n-l}(p^2)
 &=&
   \sum_{\begin{smallmatrix} i,j \\ 0 \leq i \leq j \leq n \end{smallmatrix}}
    K_{i,j}^{l-i-n+j}.
\end{eqnarray*}
\end{lemma}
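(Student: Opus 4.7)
The plan is to rewrite $E_{k,m}^{(n)}|V_{l,n-l}(p^2)$ as a single sum over elements $[M,(\lambda,0,0)]\in G_n^J$ and then match it against the definition of $K_{i,j}^{\alpha}$ term by term, using Proposition~\ref{prop:Ya2_decomposition} as the combinatorial input. First I would observe that $V_{l,n-l}(p^2)$ is the action of the double coset of $g:=\begin{pmatrix}\delta_l & 0_n\\ 0_n & p^2\delta_l^{-1}\end{pmatrix}$, since $\mathrm{diag}(1_l,p1_{n-l},p^2 1_l,p1_{n-l})$ decomposes this way. Next, because $\Gamma_{n,0}^J=\Gamma_{n,0}\ltimes\{(0,\mu,\kappa):\mu\in\Z^n,\kappa\in\Z\}$, every right coset in $\Gamma_{n,0}^J\backslash\Gamma_n^J$ has a unique representative of the form $[\gamma_1,(\lambda,0,0)]$ with $\gamma_1$ running through $\Gamma_{n,0}\backslash\Gamma_n$ and $\lambda$ through $\Z^n$. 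This gives
\begin{eqnarray*}
E_{k,m}^{(n)} &=& \sum_{\gamma_1\in\Gamma_{n,0}\backslash\Gamma_n}\sum_{\lambda\in\Z^n} 1|_{k,m}[\gamma_1,(\lambda,0,0)].
\end{eqnarray*}

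The key calculation is then: if $\Gamma_n g\Gamma_n=\bigsqcup_i\Gamma_n g_i$ is a coset decomposition, then by the semidirect product rule $(g_1,h_1)(g_2,h_2)=(g_1g_2,h_1(g_1\cdot h_2))$ applied with $h_2=(0,0,0)$, one has
\begin{eqnarray*}
[\gamma_1,(\lambda,0,0)]\cdot[g_i,(0,0,0)] &=& [\gamma_1 g_i,(\lambda,0,0)].
\end{eqnarray*}
The definition of $j(k,m;M,\lambda)$ in Section~\ref{ss:def_Kij} is precisely $1|_{k,m}[M,(\lambda,0,0)]$. Now as the pair $(i,\gamma_1)$ varies, $\gamma_1 g_i$ runs bijectively through $\Gamma_{n,0}\backslash\Gamma_n g\Gamma_n$: indeed, distinct $g_i$ lie in distinct $\Gamma_n$-cosets hence in distinct $\Gamma_{n,0}$-cosets, while for fixed $i$ the map $\gamma_1\mapsto\Gamma_{n,0}\gamma_1 g_i$ is exactly the reduction $\gamma_1\mapsto\Gamma_{n,0}\gamma_1$. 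Therefore
\begin{eqnarray*}
E_{k,m}^{(n)}|V_{l,n-l}(p^2)
&=& \sum_{M\in\Gamma_{n,0}\backslash\Gamma_n g\Gamma_n}\sum_{\lambda\in\Z^n} j(k,m;M,\lambda).
\end{eqnarray*}

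Finally, apply Proposition~\ref{prop:Ya2_decomposition} to split $\Gamma_n g\Gamma_n=\bigsqcup_{i,j}\bigsqcup_{[x],\,\mathrm{rank}_p(x)=l-n-i+j}\Gamma_{n,0}\delta_{i,j}(x)\Gamma_n$. A standard argument shows that representatives of $\Gamma_{n,0}\backslash\Gamma_{n,0}\delta_{i,j}(x)\Gamma_n$ are given by $\delta_{i,j}(x)M$ with $M$ ranging through $\Gamma(\delta_{i,j}(x))\backslash\Gamma_n$, since $\delta_{i,j}(x)^{-1}\Gamma_{n,0}\delta_{i,j}(x)\cap\Gamma_n=\Gamma(\delta_{i,j}(x))$ by definition. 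Substituting this back recovers exactly the definition of $K_{i,j}^{l-i-n+j}$, yielding the claim. The main obstacle is the bookkeeping for the semidirect product, i.e.\ verifying both that the chosen coset representatives $[\gamma_1,(\lambda,0,0)]$ are unique and that the Heisenberg component $(\lambda,0,0)$ is genuinely unchanged when multiplied on the right by $[g_i,(0,0,0)]$; once this is done, all remaining steps are purely a re-indexing of summations driven by Proposition~\ref{prop:Ya2_decomposition}.
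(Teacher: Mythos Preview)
Your proposal is correct and follows essentially the same approach as the paper. The paper's own proof is a single sentence stating that the lemma follows from Proposition~\ref{prop:Ya2_decomposition} together with the definitions of $E_{k,m}^{(n)}$, $V_{l,n-l}(p^2)$ and $K_{i,j}^{\alpha}$; what you have written is a careful unpacking of exactly this chain of definitions, including the coset bookkeeping for $\Gamma_{n,0}^J\backslash\Gamma_n^J$ and for $\Gamma_{n,0}\backslash\Gamma_{n,0}\,\delta_{i,j}(x)\,\Gamma_n$. (A minor notational point: the paper writes elements of $G_n^J$ as $[g,(\lambda,\mu),\kappa]$, so your $(\lambda,0,0)$ should be $(\lambda,0),0$.)
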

\begin{proof}
It follows from Proposition~\ref{prop:Ya2_decomposition}
and from the definitions of $E_{k,m}^{(n)}$, $V_{l,n-l}(p^2)$ and $K_{i,j}^{\alpha}$.
\end{proof}

\begin{lemma}\label{lem:kij_yam}
If $p^2 |m$, then
\begin{eqnarray*}
 K_{i,j}^{\alpha}
 &=&
   p^{-k(2n-i-j)+(n-j)(n-i+1)}
   \sum_{\begin{smallmatrix}
          x = diag(0_i,x_{2,2},0_{n-j}) \\
          x_{2,2} = {^t x_{2,2}} \in M_{j-i}(\Z)\, mod\, p \\
          rank_p(x_{2,2}) = \alpha \end{smallmatrix}}
   \sum_{M \in \Gamma(\delta_{i,j}) \backslash \Gamma_n} \\
   && \times 
   \sum_{\lambda \in (p^2 \Z)^i \times (p \Z)^{j-i} \times \Z^{n-j}}
   j\!\left(k,\frac{m}{p^2}; \begin{pmatrix} 1_n & p^{-1} x \\ 0 & 1_n \end{pmatrix} M , \lambda\right)\!(\tau,p^2z).
\end{eqnarray*}

If $p^2 {\not |} m$, then
\begin{eqnarray*}
 K_{i,j}^{\alpha}
 &=&
   p^{-k(2n-i-j)+(n-j)(n-i+1)}
   \sum_{\begin{smallmatrix}
          x = diag(0_i,x_{2,2},0_{n-j}) \\
          x_{2,2} = {^t x_{2,2}} \in M_{j-i}(\Z)\, mod\, p  \\
          rank_p(x_{2,2}) = \alpha \end{smallmatrix}}
   \sum_{M \in \Gamma(\delta_{i,j}) \backslash \Gamma_n} \\
   && \times 
   \sum_{\lambda \in (p \Z)^i \times \Z^{n-i}}
   j\!\left(k,m; \begin{pmatrix} 1_n & p^{-1} x \\ 0 & 1_n \end{pmatrix} M , \lambda\right)\!(\tau,p z).
\end{eqnarray*}
We remark that this lemma has been shown for the case $m=1$ by Yamazaki~\cite{Ya2}.
\end{lemma}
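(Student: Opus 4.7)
The plan is to follow the strategy used by Yamazaki in~\cite{Ya2} for the special case $m=1$, which necessarily falls in the ``$p^2\nmid m$'' regime; the new ingredient is to track how an arbitrary power of $p$ dividing the index $m$ gets redistributed between the index of the target Jacobi form and the argument $z$.

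First I would use the factorization
\[
 \delta_{i,j}(x) \;=\; \begin{pmatrix} 1_n & p^{-1}x \\ 0_n & 1_n \end{pmatrix}\cdot
 \operatorname{diag}(p^2\delta_{i,j}^{-1},\,\delta_{i,j})
\]
and apply the slash operator $|_{k,m}$ factor-by-factor to the constant function $1$. Setting $\Delta := \operatorname{diag}(p^2\delta_{i,j}^{-1},\delta_{i,j})$, one has $\nu(\Delta)=p^2$; when combined with the subsequent $M\in\Gamma_n$, the $\Delta$-slash contributes the Jacobian $\det(\delta_{i,j})^{-k}=p^{-k(2n-i-j)}$ (accounting for the first exponent in the prefactor) and rescales the Jacobi argument by $(\tau,z)\mapsto(p^2\delta_{i,j}^{-1}\tau\delta_{i,j}^{-1},\,p^2\delta_{i,j}^{-1}z)$. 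The upper-triangular factor then produces the translation $\tau\mapsto\tau+p^{-1}x$ inside the slash, giving rise to the matrix $\left(\begin{smallmatrix}1_n & p^{-1}x\\ 0 & 1_n\end{smallmatrix}\right)M$ that appears inside $j(k,m';\,\cdot\,,\lambda)$ in the target expression.

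Next I would recombine the inner sum $\sum_{\lambda\in\Z^n}$ with the $\Delta$-rescaling of $z$. Writing $\lambda,z$ in blocks adapted to $n=i+(j-i)+(n-j)$, the substitution $\lambda=p^{-2}\delta_{i,j}\lambda'$ transforms the $\lambda$-quadratic exponential $e\!\bigl(p^2 m(\tau[\lambda]+2\lambda^t(p^2\delta_{i,j}^{-1}z))\bigr)$ into an exponential in $\lambda'$ whose $z$-dependence can be renormalized by rescaling $z\mapsto p^s z$. The dichotomy is dictated by how many powers of $p$ can be simultaneously absorbed: when $p^2\mid m$, one writes $m=p^2m'$ and pulls two factors of $p$ out of the index (giving target index $m/p^2$) while rescaling $z\mapsto p^2z$, which forces the admissible $\lambda'$ to lie in $(p^2\Z)^i\times(p\Z)^{j-i}\times\Z^{n-j}$; when $p^2\nmid m$, only one such factor can be transferred, so the target index remains $m$, the rescaling is only $z\mapsto pz$, and the constraint becomes $\lambda'\in(p\Z)^i\times\Z^{n-i}$. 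In both cases the remaining ``fractional'' exponentials of the form $e(m\cdot p^{-1}\cdot\mathrm{integer})$ must collapse to $1$, which is exactly what the hypothesis on $m$ ensures.

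Finally I would reconcile the sum $\sum_{[x]}\sum_{M\in\Gamma(\delta_{i,j}(x))\backslash\Gamma_n}$ appearing in the definition of $K_{i,j}^{\alpha}$ with the target sum $\sum_{x_{2,2}\ \mathrm{mod}\ p}\sum_{M\in\Gamma(\delta_{i,j})\backslash\Gamma_n}$ via an orbit--stabilizer computation for the action of the relevant block of $\Gamma(\delta_{i,j})$ on symmetric matrices mod $p$. Writing the orbit of $x_{2,2}$ inside the set of symmetric matrices of rank $\alpha$ and matching it against the index $[\Gamma(\delta_{i,j}):\Gamma(\delta_{i,j}(x))]$, a direct count of elementary divisors produces the second factor $p^{(n-j)(n-i+1)}$ in the prefactor; this is precisely the combinatorial step carried out by Yamazaki for $m=1$ and goes through verbatim here, as it is independent of $m$.

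The main obstacle is the second paragraph: one must check that after the substitution in $\lambda$ and the rescaling of $z$, every phase factor arising from the cocycle of the Heisenberg part either lies in $\Z$ (and hence disappears in $e(\cdot)$) or is correctly absorbed into the target index $m'$. This is where the divisibility condition on $m$ enters in an essential way, and it is the only place where the two cases genuinely differ; the rest of the argument is structurally identical to~\cite{Ya2}.
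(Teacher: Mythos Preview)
Your overall architecture matches the paper's (and Yamazaki's): factor $\delta_{i,j}(x)$, substitute in $\lambda$, reorganize the coset sum. But there is a genuine gap in how you treat the $p^2\nmid m$ case, and it propagates between your second and third paragraphs.

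For the rescaling $(\tau,z)\mapsto(\tau,pz)$ with target index $m$, the correct substitution is $\lambda'=p\,\delta_{i,j}^{-1}\lambda$. This sends $\Z^n$ to $(p\Z)^i\times\Z^{j-i}\times(p^{-1}\Z)^{n-j}$, \emph{not} to $(p\Z)^i\times\Z^{n-i}$ as you claim; the last block is a priori only in $p^{-1}\Z^{n-j}$. The mechanism that forces $\lambda'_3\in\Z^{n-j}$ is not the substitution itself but the insertion of the unipotent subgroup $U=\{\smat{1_n}{s}{0}{1_n}\}$: one enlarges $\Gamma(\delta_{i,j}(x))\backslash\Gamma_n$ to $\Gamma(\delta_{i,j}(x))U\backslash\Gamma_n$ by summing over representatives $U'$, and this produces the character sum
\[
\sum_{s\in U'} e\!\bigl(p^2 m\,{}^t\lambda\,\delta_{i,j}^{-1}s\,\delta_{i,j}^{-1}\lambda\bigr).
\]
When $p^2\mid m$ every phase is an integer and the sum collapses to $|U'|=p^{(n-j)(n-i+1)}$; when $p^2\nmid m$ the sum equals $|U'|$ if $p\mid\lambda_3$ and vanishes otherwise, which is exactly what cuts the lattice down to $(p\Z)^i\times\Z^{n-i}$. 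Thus the prefactor $p^{(n-j)(n-i+1)}$ and, in the second case, the integrality constraint on $\lambda'$ come from the \emph{same} unipotent character sum.

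Consequently your third paragraph misattributes the prefactor. A pure orbit--stabilizer identity between $\sum_{[x]}\sum_{\Gamma(\delta_{i,j}(x))\backslash\Gamma_n}$ and $\sum_{x_{2,2}\bmod p}\sum_{\Gamma(\delta_{i,j})\backslash\Gamma_n}$ is not available, because the unipotent part of $\Gamma(\delta_{i,j})$ acts on the summand by nontrivial phases, not by permuting $x$. What is genuinely $m$-independent is only the final recombination of $[x]$ with representatives of $\Gamma(\delta_{i,j}(x))U\backslash\Gamma(\delta_{i,j})$ into a single sum over $x_{2,2}\bmod p$; that step contributes no numerical factor. So the dichotomy between the two cases lives in the unipotent sum, not (only) in the Heisenberg cocycle you flag in your last paragraph.
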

\begin{proof}
The proof of this lemma is an analogue to~\cite{Ya2} and straightforward.
If $p^2 {\not |} m$, then the proof is similar to the case $m=1$.
Hence we assume $p^2 | m$ and shall  prove this lemma.

We put
$U
 :=
 \left\{\left(\begin{smallmatrix}1_n & s \\ 0 & 1_n \end{smallmatrix}\right) \, | \,
 s = ^t s \in M_n(\Z)\right\}$.
Then the set
\[
 U' :=
\left\{ \left(\begin{smallmatrix} 1_n & s \\ 0 & 1_n \end{smallmatrix}\right) \, \left| \,
   s = \left(\begin{smallmatrix} 0 & 0 & 0 \\ 0 & 0 & s_{23} \\ 0 & ^t s_{23} & s_{33} \end{smallmatrix}\right) \!\!\!\!\mod p,\
   s_{23} \in M_{j-i,n-j}(\Z) ,\
   s_{33} = {^t s_{33}} \in M_{n-j}(\Z)
 \right\} \right.
\]
is a complete set of representatives of
$\Gamma(\delta_{i,j}(x))\backslash \Gamma(\delta_{i,j}(x))U$.
Thus
\begin{eqnarray*}
  &&
   \sum_{\begin{smallmatrix} [x] \\ rank_p(x) = \alpha \end{smallmatrix}}
   \sum_{M \in \Gamma(\delta_{i,j}(x))\backslash \Gamma_n}
   \sum_{\lambda \in \Z^n} j(k,m; \delta_{i,j}(x) M,\lambda)(\tau,z) \\
  &=&
   \sum_{\begin{smallmatrix} [x] \\ rank_p(x) = \alpha \end{smallmatrix}}
   \sum_{M \in \Gamma(\delta_{i,j}(x)) U \backslash \Gamma_n}
   \sum_{\lambda \in \Z^n}
     j(k,m:\delta_{i,j}(x)M,\lambda)(\tau,z)
    \!\!
   \sum_{\left(\begin{smallmatrix}1_n & s \\ 0 & 1_n \end{smallmatrix}\right) \in U'}
    \!\!
    e(p^2 m {^t \lambda} \delta_{i,j}^{-1} s \delta_{i,j}^{-1} \lambda)\\
  &=&
   p^{(j-i)(n-j)+ (n-j)(n-j+1)}
   \sum_{\begin{smallmatrix} [x] \\ rank_p(x) = \alpha \end{smallmatrix}}
   \sum_{M \in \Gamma(\delta_{i,j}(x)) U \backslash \Gamma_n}
   \sum_{\lambda \in \Z^n}
     j(k,m:\delta_{i,j}(x)M,\lambda)(\tau,z).
\end{eqnarray*}
We remark
\begin{eqnarray*}
    j(k,m:\delta_{i,j}(x),\lambda)(\tau,z)
  &=&
    p^{-k(2n-i-j)}e\!\left(m{^t \lambda}(p^2\delta_{i,j}^{-1}\tau\delta_{i,j}^{-1}+ p^{-1} x)\lambda
                    + 2p^2 m {^t \lambda} \delta_{i,j}^{-1}z \right).
\end{eqnarray*}
Hence if we put $\lambda' = p^2 \delta_{i,j}^{-1} \lambda$,
then $\lambda' \in (p^2 \Z)^i \times (p\Z)^{j-i} \times \Z^{n-j}$
and we have
\begin{eqnarray*}
    j(k,m:\delta_{i,j}(x),\lambda)(\tau,z)
  &=&
    p^{-k(2n-i-j)} j(k, p^{-2}m :\left(\begin{smallmatrix} 1_n & p^{-1} x \\ 0 & 1_n \end{smallmatrix}\right), \lambda').
    (\tau,p^2 z).
\end{eqnarray*}
Thus
\begin{eqnarray*}
   K_{i,j}^{\alpha}
  &=&
   p^{-k(2n-i-j)+(n-j)(n-i+1)}
   \sum_{\begin{smallmatrix} [x] \\ rank_p(x) = \alpha \end{smallmatrix}}
   \sum_{M \in \Gamma(\delta_{i,j}(x)) U \backslash \Gamma_n}\\
  &&
   \times
   \sum_{\lambda' \in (p^2 \Z)^i \times (p\Z)^{j-i} \times \Z^{n-j}}
     j(k, p^{-2}m :\left(\begin{smallmatrix} 1_n & p^{-1} x \\ 0 & 1_n \end{smallmatrix}\right)M, \lambda')(\tau,p^2 z)\\
  &=&
   p^{-k(2n-i-j)+(n-j)(n-i+1)}
   \sum_{\begin{smallmatrix} [x] \\ rank_p(x) = \alpha \end{smallmatrix}}
   \sum_{\left(\begin{smallmatrix} u & 0 \\ 0 & {^t u}^{-1} \end{smallmatrix}\right)
           \in \Gamma(\delta_{i,j}(x)) U \backslash \Gamma(\delta_{i,j})}
   \sum_{M \in \Gamma(\delta_{i,j}) \backslash \Gamma_n}\\
  &&
   \times
   \sum_{\lambda' \in (p^2 \Z)^i \times (p\Z)^{j-i} \times \Z^{n-j}}
     j(k, p^{-2}m :\left(\begin{smallmatrix} 1_n & p^{-1} u^{-1}x{^t u}^{-1} \\ 0 & 1_n \end{smallmatrix}\right)M,
       {^t u}\lambda')(\tau,p^2 z).
\end{eqnarray*}
Here, the matrix $u$ in the above summation belongs to $\delta_{i,j} \mbox{GL}(n,\Z) \delta_{i,j}^{-1} \cap \mbox{GL}(n,\Z)$.
Hence ${^t u}$ stabilizes the lattice $(p^2 \Z)^i \times (p\Z)^{j-i} \times \Z^{n-j}$.
Furthermore, the summation over the equivalent class $[x]$ and the summation over the representatives
of $\Gamma(\delta_{i,j}(x)) U \backslash \Gamma(\delta_{i,j})$
turns into the summation over $x = \mbox{diag}(0,x_{2,2},0)$ such that
$x_{2,2} = {^t x_{2,2}} \in M_{j-i}(\Z) \!\!\mod p$ and $\mbox{rank}_p(x) = \alpha$.
Therefore we conclude this lemma.
\end{proof}

%%%%%%
\subsection{Summation $ G_j^n(m, \lambda)$}
%%%%%%

We define
\begin{eqnarray*}
 g_p(n,i) &:=&
 \begin{cases}
   \prod_{a = 1}^i (p^{n-a+1}-1) (p^a - 1)^{-1} & \mbox{ if } 1 \leq i \leq n,\\
   1 & \mbox{ if } i = 0,\\
   0 & \mbox{ otherwise}.
 \end{cases}
\end{eqnarray*}

For any $\lambda \in \Z^n$ and for $0 \leq j \leq n$ we define
\begin{eqnarray*}
 G_j^n(m, \lambda)
 &:=&
 \sum_{\begin{smallmatrix} x = ^t x \in M_{n}(\Z/p\Z) \\ rank_p x = j\end{smallmatrix}}
  e\left(\frac{m}{p} {^t \lambda} x \lambda \right) .
\end{eqnarray*}
%and we put
%\begin{eqnarray*}
% G_j^n(\lambda) := G_j^n(1,\lambda).
%\end{eqnarray*}

\begin{prop}\label{prop:gauss}
For $m \in \Z$ and for $\lambda \in \Z^n$ we have
\begin{eqnarray*}
 G_j^n(m, \lambda)
 &=&
 \begin{cases}
  p^{\lfloor \frac{j}{2} \rfloor \left(\lfloor \frac{j}{2} \rfloor + 1 \right)}
  g_p(n,j)
  \displaystyle{\prod_{\begin{smallmatrix} \alpha = 1 \\ \alpha\, : \, odd \end{smallmatrix}}^j
  (p^\alpha - 1)}
  & \mbox{ if } m \lambda \equiv 0 \mod p, \\
  (-1)^j p^{\lfloor \frac{j}{2} \rfloor \left( \lfloor \frac{j}{2} \rfloor + 1 \right)}
  g_p\!\left(n-1, 2 \lfloor \frac{j}{2} \rfloor \right)
  \displaystyle{
   \prod_{\begin{smallmatrix} \alpha = 1 \\ \alpha\, :\, odd \end{smallmatrix}}^{j-1}
   (p^\alpha - 1)
  }
  & \mbox{ if } m \lambda \not \equiv 0 \mod p.
 \end{cases}
\end{eqnarray*}
\end{prop}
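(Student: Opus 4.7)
The plan is to reduce $G_j^n(m,\lambda)$ to a canonical form via a change of variables on $x$, and then to evaluate the resulting character sums by stratifying the symmetric matrices by their kernel.

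For any $g \in \mbox{GL}_n(\Z/p\Z)$ the substitution $x \mapsto {^tg}\, x\, g$ is a rank-preserving bijection on the set of symmetric $n \times n$ matrices over $\Z/p\Z$, under which the exponent $m\, {^t\lambda}\, x\, \lambda$ becomes $m\, {^t(g\lambda)}\, x\, (g\lambda)$. Hence $G_j^n(m,\lambda)$ depends only on the $\mbox{GL}_n(\Z/p\Z)$-orbit of $m\lambda \bmod p$, and there are exactly two such orbits: $m\lambda \equiv 0$ and $m\lambda \not\equiv 0 \pmod p$. In the latter case we may assume $m\lambda \equiv e_1 \pmod p$, so that $G_j^n$ becomes $\sum_{x \text{ sym, rank } j} \psi(x_{11})$, writing $\psi(t) := e^{2\pi i t/p}$.

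When $m\lambda \equiv 0 \pmod p$, the character is trivial and $G_j^n$ counts rank-$j$ symmetric matrices over $\Z/p\Z$. Every such matrix is determined by its kernel $K \subset (\Z/p\Z)^n$ (an $(n-j)$-dimensional subspace, contributing $g_p(n,j)$ choices) together with the induced non-degenerate symmetric form on $(\Z/p\Z)^n/K$. Writing $N_j$ for the number of non-singular symmetric $j \times j$ matrices over $\Z/p\Z$, one has $G_j^n = g_p(n,j) N_j$. The closed form $N_j = p^{\lfloor j/2 \rfloor (\lfloor j/2 \rfloor + 1)} \prod_{\alpha \text{ odd},\, \alpha \leq j}(p^\alpha - 1)$ can then be obtained by induction on $j$ using the block decomposition $y = \smat{a}{{^tb}}{b}{y'}$ and the Schur complement, treating the parities of $j$ separately.

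When $m\lambda \equiv e_1 \pmod p$, I would stratify by whether $e_1 \in K = \ker x$. If $e_1 \in K$ then $\psi(x_{11}) = 1$; the number of $(n-j)$-dimensional $K$ containing $e_1$ equals $g_p(n-1, n-j-1) = g_p(n-1, j)$, so the contribution is $g_p(n-1, j) N_j$. If $e_1 \notin K$, a change of basis in $(\Z/p\Z)^n/K$ sending the image of $e_1$ to the first basis vector reduces the inner sum to $M_j := \sum_{y \in \mbox{Sym}_j\text{ nonsing}} \psi(y_{11})$, contributing $(g_p(n, j) - g_p(n-1, j)) M_j$. To compute $M_j$ I would split by $y_{11} = 0$ versus $y_{11} \neq 0$ and invoke the Schur complement (using $\sum_{c \neq 0} \psi(c) = -1$), obtaining the identity $M_j = N_j - p^j N_{j-1}$; substituting the explicit form of $N_j$ yields $M_j = 0$ for $j$ even and $M_j = -N_{j-1}$ for $j$ odd.

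Finally, the Pascal-type identity $g_p(n, j) = g_p(n-1, j-1) + p^j g_p(n-1, j)$ for Gauss binomial coefficients combines the two contributions into the claimed formula in both parities. The main obstacle is the explicit evaluation of $N_j$: the closed form requires a careful case-analysis on the parity of $j$ (reflecting the split versus non-split structure of non-degenerate symmetric forms), and once $N_j$ is in hand the remaining steps are routine manipulations of Gauss binomials.
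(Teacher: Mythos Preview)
Your approach is correct and considerably more detailed than the paper's. The paper's proof is essentially two lines: observe that $G_j^n(m,\lambda) = G_j^n(1,0)$ when $p \mid m$ and $G_j^n(m,\lambda) = G_j^n(1,\lambda)$ when $p \nmid m$ (your $\mbox{GL}_n(\Z/p\Z)$-orbit reduction subsumes this), and then cite \cite[Lemma~3.1]{Ya2} for the evaluation of $G_j^n(1,\lambda)$. You instead carry out that evaluation from scratch by stratifying rank-$j$ symmetric matrices by their kernel, computing the character sum $M_j = N_j - p^j N_{j-1}$ via a Schur-complement argument, and assembling the two contributions with the $q$-Pascal identity $g_p(n,j) = g_p(n-1,j-1) + p^j g_p(n-1,j)$. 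The payoff is a self-contained proof; the cost is that you must supply (or at least sketch, as you do) the classical closed form for the count $N_j$ of non-singular symmetric $j\times j$ matrices over $\Z/p\Z$, which the paper simply imports from the reference. Both routes are sound; yours is the one to use when \cite{Ya2} is not at hand.
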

\begin{proof}
If $p|m$, then $ G_j^n(m,\lambda) = G_j^n(1,0)$.
And if $p {\not |} m$, then
$  G_j^n(m,\lambda) = G_j^n(1,\lambda)$.
Hence we need to calculate the case $m=1$.
The calculation of $G_j^n(1,\lambda)$ has already been obtained by \cite[Lemma 3.1]{Ya2}.
\end{proof}

%%%%%%
\subsection{Some cardinalities}
%%%%%%
In this subsection we will give some lemmas to
calculate $K_{i,j}^{\alpha}$.

For $0 \leq i \leq j \leq n$,
%we define matrices
%\begin{eqnarray*}
% \delta_{i,j} &:=& \delta_{i,j}(p) =
% \begin{pmatrix} 1_{i} & & \\ & p\cdot 1_{j-i} &  \\  &  & p^2 \cdot 1_{n-j} \end{pmatrix}, \\
% \delta_{i} &:=& \delta_{i,n} =
% \begin{pmatrix} 1_{i} & \\ & p \cdot 1_{n-i} \end{pmatrix}.
%\end{eqnarray*}
we put
\begin{eqnarray*}
 H_i &:=& \delta_{i} \mbox{GL}_n(\Z) \delta_{i}^{-1} \cap \mbox{GL}_n(\Z), \\
 H_{i,j} &:=&  \delta_{i,j} \mbox{GL}_n(\Z) \delta_{i,j}^{-1} \cap \mbox{GL}_n(\Z).
\end{eqnarray*}
We define two sets
\begin{eqnarray*}
  S_{i}
 &:=& \left. \left\{ \begin{pmatrix} * & * \\ p\, {^t b} & * \end{pmatrix}^{-1} \in \mbox{GL}_n(\Z)
              \, \right| \,
              b \in \Z^i
      \right\}, \\
  S_{i,j}
 &:=& \left. \left\{ \begin{pmatrix}       *  &       * & * \\
                                      p^2 {^t b_1} & p\, {^t b_2} & *\\
                     \end{pmatrix}^{-1} \in \mbox{GL}_n(\Z)
              \, \right| \,
              b_1 \in \Z^i, \, b_2 \in \Z^{j-i} \right\},
\end{eqnarray*}
where $b$, $b_1$ and $b_2$ in the above sets are column vectors.
%And we define $S_n := \emptyset$ and $S_{i,n} := \emptyset$.

%It is not difficult to see that
%$H_i$ acts on $\mbox{GL}_n(\Z)$ and on $S_i$,
%and $H_{i,j}$ acts on $\mbox{GL}_n(\Z)$, on $S_i$ and on $S_{i,j}$
%by multiplication from left.
%\todo{multiplication from left}

%\todo{Is the above explanation necessary?}

\begin{lemma}\label{lem:value_a}
We have
\begin{eqnarray*}
 \left| H_i \backslash \mbox{GL}_n(\Z) \right|
 &=&
  g_p(n,i), \\
 \left| H_i \backslash S_{i} \right|
 &=&
  g_p(n-1,i).
\end{eqnarray*}
Furthermore, we have
\begin{eqnarray*}
 \left| H_{i,j} \backslash \mbox{GL}_n(\Z) \right|
 &=&
 p^{i(n-j)} g_p(n,j)\, g_p(j,i), \\
 \left| H_{i,j} \backslash S_{i} \right|
 &=&
 p^{i(n-j)}  g_p(n-1,i)\, g_p(n-i,n-j), \\
 \left| H_{i,j} \backslash S_{i,j} \right|
 &=&
 p^{i(n-1-j)} g_p(n-1,j)\, g_p(j,i).
\end{eqnarray*}
\end{lemma}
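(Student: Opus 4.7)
The plan is to interpret each coset $HA$ as a sublattice of $\Z^n$ with prescribed elementary divisors, and then count these sublattices by reducing modulo $p$ or $p^2$ and applying standard enumeration of subspaces and flags in $\mathbb{F}_p$-vector spaces.

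The subgroups $H_i$ and $H_{i,j}$ are precisely the $\mathrm{GL}_n(\Z)$-stabilizers of the sublattices $L_i := \delta_i \Z^n$ and $L_{i,j} := \delta_{i,j}\Z^n$: indeed $A \in H_{i,j}$ iff $\delta_{i,j}^{-1} A \delta_{i,j} \in \mathrm{GL}_n(\Z)$ iff $A \cdot L_{i,j} = L_{i,j}$. By the $\mathrm{GL}_n(\Z)$-transitivity on sublattices with a given invariant-factor type, each coset (passing, if needed, to right cosets via $A \mapsto A^{-1}$) is uniquely labelled by the sublattice $N_A := A L_{i,j}$. For $|H_i \backslash \mathrm{GL}_n(\Z)|$ this reduces to counting $i$-dimensional subspaces of $\mathbb{F}_p^n$, giving $g_p(n,i)$. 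For $|H_{i,j} \backslash \mathrm{GL}_n(\Z)|$, a submodule $\bar N \subset (\Z/p^2)^n$ of type $(\Z/p^2)^i \oplus (\Z/p)^{j-i}$ is described by the pair $(U_N, W'_N)$ of subspaces of $\mathbb{F}_p^n$ where $U_N := \pi(\bar N)$ has dimension $i$, $W'_N$ (corresponding to $\bar N \cap p(\Z/p^2)^n$ under the natural isomorphism $p(\Z/p^2)^n \cong \mathbb{F}_p^n$) has dimension $j$ and contains $U_N$, together with $p^{i(n-j)}$ choices of lift of $U_N$; summing over such pairs gives $p^{i(n-j)} g_p(n,j) g_p(j,i)$.

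For the $S$-type cardinalities I would first verify, by a block-matrix computation analogous to the manipulations inside the proof of Lemma~\ref{lem:kij_yam}, that $S_i$ and $S_{i,j}$ are stable under left multiplication by $H_i$ and $H_{i,j}$ respectively. Translating the condition defining $S_i$ or $S_{i,j}$ (a divisibility condition on the last row of $A^{-1}$) into a column condition on the basis $\{d_k a_k\}$ of $N_A$ (where $a_k$ are the columns of $A$ and $d_k \in \{1,p,p^2\}$ are the diagonal entries of $\delta_{i,j}$) shows that $A \in S_i$ is equivalent to $N_A \subset \Z^{n-1} \oplus p\Z$, i.e.\ $U_{N_A} \subset \mathbb{F}_p^{n-1}$, while $A \in S_{i,j}$ is equivalent to the stronger $N_A \subset \Z^{n-1}\oplus p^2\Z$, i.e.\ $\bar N_A \subset (\Z/p^2)^{n-1}$, equivalently both $U_{N_A}$ and $W'_{N_A}$ lie inside $\mathbb{F}_p^{n-1}$. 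Restricting the pair enumeration from the previous paragraph to $U \subset \mathbb{F}_p^{n-1}$ gives $|H_{i,j}\backslash S_i|$, and further restricting $W' \subset \mathbb{F}_p^{n-1}$ while replacing the lift-count exponent $n-j$ by $n-1-j$ (since the ambient module becomes $(\Z/p^2)^{n-1}$) gives $|H_{i,j}\backslash S_{i,j}|$; the $|H_i\backslash\cdot|$ identities follow in the same way, with $W'$ absent.

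The main technical step is this lattice-containment translation, which phrases the mixed mod-$p$/mod-$p^2$ vanishing of the last row of $A^{-1}$ as an intrinsic property of $N_A$. Once it is in place, each of the five identities follows at once from standard Gaussian-binomial computations.
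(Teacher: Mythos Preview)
The paper itself gives no argument here---it simply declares the identities ``elementary'' and leaves the details to the reader---so your proposal is supplying a proof where none is written. Your approach (identifying left cosets $H_{i,j}A$, via $A\mapsto A^{-1}$, with sublattices $A^{-1}L_{i,j}\subset\Z^n$ of fixed elementary-divisor type, and then counting these by the flag--plus--lift parametrization over $\Z/p\Z$ and $\Z/p^2\Z$) is the natural one, and your final counts are all correct.

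One inaccuracy worth fixing: in the $S_{i,j}$ case you assert that $\bar N_A\subset(\Z/p^2\Z)^{n-1}$ is \emph{equivalent} to the pair of inclusions $U_{N_A},\,W'_{N_A}\subset(\Z/p\Z)^{n-1}$. The forward implication is fine, but the converse fails: with $n=2$ and $i=j=1$, the cyclic submodule $\bar N=(\Z/p^2\Z)\cdot(1,p)\subset(\Z/p^2\Z)^2$ has $U_N=W'_N=(\Z/p\Z)\times 0$ yet $\bar N\not\subset(\Z/p^2\Z)\times 0$. The missing constraint is on the lift of $U$, which must also lie in the smaller ambient module. You in fact impose this when you pass to the ambient $(\Z/p^2\Z)^{n-1}$ and reduce the lift exponent from $i(n-j)$ to $i(n-1-j)$, so your enumeration is right; only the word ``equivalently'' needs to be softened. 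A minor related point: for $|H_{i,j}\backslash S_i|$ to make sense you also need $S_i$ stable under left multiplication by $H_{i,j}$, which follows immediately from the inclusion $H_{i,j}\subset H_i$.
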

\begin{proof}
These are elementary.
We leave details to the reader.
\end{proof}

\begin{lemma}\label{lem:Blambda}
Let $B(\lambda)$ be a function on $\lambda \in \Z^n$.
We put $L_0 := \left(p^2\Z \right)^i \times \left(p\Z \right)^{j-i} \times \Z^{n-j}$.
We assume that the sum
$\displaystyle{
    \sum_{A \in H_{i,j}\backslash GL_n(\Z)} \sum_{\lambda \in L_0} B({^t A} \lambda)}$
converges absolutely.
Then we have
\begin{eqnarray*}
\sum_{A \in H_{i,j}\backslash GL_n(\Z)} \sum_{\lambda \in L_0} B({^t A} \lambda)
    &=& a_0 \sum_{\lambda \in \Z^n} B(\lambda)
   + a_1 \sum_{\lambda \in \Z^n} B(p\lambda) 
   + a_2 \sum_{\lambda \in \Z^n} B(p^2 \lambda),
\end{eqnarray*}
where $a_0$, $a_1$ and $a_2$ are integers which satisfy
\begin{eqnarray*}
 a_0 + a_1 + a_2 = \left| H_{i,j} \backslash  \mbox{GL}_n(\Z) \right|,\
 a_0 + a_1 =\left| H_{i,j} \backslash S_{i} \right|
\mbox{ and }
 a_0 =\left| H_{i,j} \backslash S_{i,j} \right|.
\end{eqnarray*}
\end{lemma}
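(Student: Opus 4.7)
The plan is to interchange the order of summation and express the desired identity as a pointwise statement in $\mu \in \Z^n$. Setting $U := \mbox{GL}_n(\Z)$ and
\[
  N(\mu) \; := \; \#\left\{ A \in H_{i,j} \backslash U \;:\; {}^tA^{-1}\mu \in L_0 \right\},
\]
one has $\sum_{A,\lambda} B({}^tA\lambda) = \sum_{\mu \in \Z^n} N(\mu) B(\mu)$, while the right-hand side of the claimed identity equals $\sum_{\mu \in \Z^n}\bigl(a_0 + a_1 \mathbf{1}_{p\Z^n}(\mu) + a_2 \mathbf{1}_{p^2\Z^n}(\mu)\bigr) B(\mu)$. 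Since $B$ is arbitrary (and absolutely summable), the lemma reduces to showing that $N(\mu)$ is precisely this step function, with partial sums $a_0$, $a_0+a_1$, $a_0+a_1+a_2$ on the three divisibility classes $\Z^n \setminus p\Z^n$, $p\Z^n \setminus p^2\Z^n$, $p^2\Z^n$ equal to $|H_{i,j}\backslash S_{i,j}|$, $|H_{i,j}\backslash S_i|$, $|H_{i,j}\backslash U|$ respectively. As a preliminary I will verify the routine fact that $H_{i,j}$ acts on both $S_{i,j}$ and $S_i$ by left multiplication, so that the coset spaces in the statement are well defined.

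The heart of the argument is the $U$-equivariance of $N$. First, since the condition ${}^tA^{-1}\mu \in L_0$ only involves the first $j$ coordinates of ${}^tA^{-1}\mu$ modulo $p$ or $p^2$, $N(\mu)$ depends only on $\mu$ modulo $p^2$. Second, for any $C \in U$ the map $[A] \mapsto [A\cdot {}^tC]$ is a well-defined bijection on $H_{i,j}\backslash U$ (left $H_{i,j}$-cosets commute with right multiplication by a fixed element), and the easy identity
\[
  {}^t(A\cdot {}^tC)^{-1}(C\mu) \; = \; {}^tA^{-1}\mu
\]
shows that this bijection carries the set defining $N(C\mu)$ onto the set defining $N(\mu)$, yielding $N(C\mu) = N(\mu)$. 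Combined with the surjection $\mbox{SL}_n(\Z) \twoheadrightarrow \mbox{SL}_n(\Z/p^2\Z)$ and the standard transitivity of $\mbox{SL}_n$ on primitive vectors (modulo $p$ and modulo $p^2$), this implies, for $n \geq 2$, that $U$ acts transitively modulo $p^2$ on each of the three divisibility classes, so $N$ is constant on each. The $n=1$ case is immediate by direct inspection, since then $H_{i,j}\backslash U$ is a single point and $N$ equals the indicator of $L_0$, which is itself one of the three level sets.

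It remains to compute the three constant values at convenient representatives. For $\mu = 0$ the condition is trivially satisfied, so $N(0) = |H_{i,j}\backslash U|$. For $\mu = p e_n$, the identity ${}^tA^{-1}(pe_n) = p \bigl(\mbox{last row of }A^{-1}\bigr)^T$ reveals that ${}^tA^{-1}(pe_n) \in L_0$ is equivalent to "the first $i$ entries of the last row of $A^{-1}$ lie in $p\Z$" (the constraints on the next $j - i$ entries and on the last $n - j$ entries becoming automatic after the factor $p$), which is precisely the defining condition for $A \in S_i$; hence $N(pe_n) = |H_{i,j}\backslash S_i|$. For $\mu = e_n$, the same computation shows the condition becomes "the first $i$ entries of the last row of $A^{-1}$ lie in $p^2\Z$ and the next $j-i$ entries lie in $p\Z$", i.e.\ $A \in S_{i,j}$, giving $N(e_n) = |H_{i,j}\backslash S_{i,j}|$. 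Setting $a_0 := |H_{i,j}\backslash S_{i,j}|$, $a_1 := |H_{i,j}\backslash S_i| - a_0$, and $a_2 := |H_{i,j}\backslash U| - |H_{i,j}\backslash S_i|$ then closes the argument. The main obstacle will be setting up the $U$-invariance bijection correctly; once this is in hand, the identification of the conditions at $e_n$ and $pe_n$ with membership in $S_{i,j}$ and $S_i$ is a direct unpacking of the definitions.
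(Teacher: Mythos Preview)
Your proof is correct and follows essentially the same route as the paper. Both arguments interchange the sums to reduce to computing the counting function $N(\mu) = \#\{A : \mu \in {}^tA L_0\}$, show it is $\mbox{GL}_n(\Z)$-invariant, and evaluate it at the representatives $e_n$, $pe_n$, $p^2e_n$ (or $0$), identifying the resulting conditions with membership in $S_{i,j}$, $S_i$, and $\mbox{GL}_n(\Z)$. The only cosmetic difference is that the paper invokes the single fact that any $\lambda \in \Z^n$ can be carried by some $B \in \mbox{GL}_n(\Z)$ to $\gcd(\lambda)\, e_n$, whereas you pass through reduction modulo $p^2$ and transitivity of $\mbox{SL}_n$ on primitive vectors; these are equivalent phrasings of the same invariance.
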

\begin{proof}
For $\lambda \in \Z^n$ we denote by $\mbox{gcd}(\lambda)$
the greatest common divisor of all entries in $\lambda$.
Let $X$ be a complete set of representatives of $H_{i,j} \backslash \mbox{GL}_n(\Z)$.
For $\lambda \in \Z^n$ we define
\begin{eqnarray*}
    N(\lambda)
  &:=&
    \left|\left\{ A \in X \, | \, \lambda \in {^t A}L_0 \right\} \right|.
\end{eqnarray*}
We remark that $N(\lambda)$ does not depend on the choice of $X$.
To show this lemma, it is enough to calculate $N(\lambda)$ for given $\lambda \in \Z^n$.
%We fix $\lambda \in \Z^n$.
%It is enough to calculate $N(\lambda)$.

By the definition of $S_{i,j}$ and $S_i$, we have
\begin{eqnarray*}
 S_{i,j} &=& \left\{ A \in \mbox{GL}_n(\Z) \, | \, {^t (0,...,0,1)} \in {^t A} L_0 \right\}, \\
 S_i &=& \left\{ A \in \mbox{GL}_n(\Z) \, | \,  {^t (0,...,0,p)} \in {^t A} L_0 \right\}.
\end{eqnarray*}
Hence we have
$N({^t (0,...,0,1)}) = |H_{i,j}\backslash S_{i,j}|$
and
$N({^t (0,...,0,p)}) = |H_{i,j}\backslash S_{i}|$.
Furthermore, we have
$N({^t (0,...,0,p^2)}) = |H_{i,j}\backslash \mbox{GL}_n(\Z)|$.

For any $\lambda \in \Z^n$, there exists a matrix $B \in \mbox{GL}_n(\Z)$
such that ${^t B} \lambda = \mbox{gcd}(\lambda) {^t (0,...,0,1)}$.
Thus we have $N(\lambda) = N(\mbox{gcd}(\lambda) {^t (0,...,0,1)})$.
Hence $N(\lambda)$ equals to
  $\left| H_{i,j} \backslash S_{i,j} \right|$,
  $|H_{i,j}\backslash S_{i}|$ or
  $\left| H_{i,j} \backslash  \mbox{GL}_n(\Z) \right|$,
  according as
  $\mbox{gcd}(p^2, \mbox{gcd}(\lambda)) = 1$,
  $p$ or $p^2$.
Therefore we obtain this lemma.
\end{proof}

\subsection{Calculation of the function $K_{i,j}^{\alpha}$}
For simplicity we define
\begin{eqnarray*}
  G_j^n(m) := G_j^n(m,\lambda),
\end{eqnarray*}
where $\lambda \in \Z^n$ is an vector which satisfy $\lambda \not \equiv 0 \mod p$.
Due to Proposition~\ref{prop:gauss}, the value $G_j^n(m)$
does not depend on the choice of $\lambda$.

\begin{lemma}\label{lem:K_linear}
 If $p^2|m$, then we have
 \begin{eqnarray*}
     K_{i,j}^{\alpha}
  &=&
       p^{-k(2n-i-j)+(n-j)(n-i+1)} G_{\alpha}^{j-i}(0)\\
  &&
      \times
      \left\{ a_0 E_{k,\frac{m}{p^2}}^{(n)}(\tau,p^2 z)
               + a_1 E_{k,m}^{(n)}(\tau,p z) 
               + a_2 E_{k,mp^2}^{(n)}(\tau,z) 
      \right\},
 \end{eqnarray*}
 where
 \begin{eqnarray*}
     a_0 + a_1 + a_2 = \left| H_{i,j} \backslash  \mbox{GL}_n(\Z) \right|,\
     a_0 + a_1 =\left| H_{i,j} \backslash S_{i} \right|
    \mbox{ and }
     a_0 =\left| H_{i,j} \backslash S_{i,j} \right|.
 \end{eqnarray*}
 If $p^2 {\not |} m$, then we have
 \begin{eqnarray*}
      K_{i,j}^{\alpha}
   &=&
      p^{-k(2n-i-j)+(n-j)(n-i+1)}
      \left\{
        (G_{\alpha}^{j-i}(0) - G_{\alpha}^{j-i}(m))
        \left[\Gamma(\delta_{j}) ; \Gamma(\delta_{i,j}) \right]
      \right.\\
   &&
        \times 
        \left\{
          g_p(n-1,j) E_{k,m}^{(n)}(\tau,pz)
          + p^{n-j} g_p(n-1,j-1) E_{k,mp^2}^{(n)}(\tau,z)
        \right\} \\
   &&
      +\
      G_{\alpha}^{j-i}(m)
      \left[\Gamma(\delta_{i}) ; \Gamma(\delta_{i,j}) \right]\\
   && \left.
        \times 
        \left\{
          g_p(n-1,i) E_{k,m}^{(n)}(\tau,pz)
          + p^{n-i} g_p(n-1,i-1) E_{k,mp^2}^{(n)}(\tau,z)
        \right\}
      \right\},
 \end{eqnarray*}
 where $\Gamma(\delta_{i,j})$ and $\Gamma(\delta_{i})$ are groups denoted in Section~\ref{ss:def_Kij}.
% \todo{delete the above relations?}

 In particular, the function $K_{i,j}^{\alpha}$ is a linear combination of $E_{k,\frac{m}{p^2}}^{(n)}|U(p^2)$,
 $E_{k,m}^{(n)}| U(p)$ and $E_{k,mp^2}^{(n)}$.
\end{lemma}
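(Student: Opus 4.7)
The plan is to start from the explicit rewritings of $K_{i,j}^{\alpha}$ given in Lemma~\ref{lem:kij_yam} and process the three nested sums, over $x$, over $M \in \Gamma(\delta_{i,j})\backslash\Gamma_n$, and over $\lambda \in L_0$, from the innermost outward. Writing $\lambda=(\lambda_1,\lambda_2,\lambda_3)$ in the block structure matching $x=\mbox{diag}(0_i,x_{2,2},0_{n-j})$, the only $x$-dependence of the phase introduced by the shift $\smat{1_n}{p^{-1}x}{0}{1_n}$ reduces to $e\!\left(\tfrac{m}{p}\,{}^t\lambda_2\, x_{2,2}\,\lambda_2\right)$ after the rescaling induced by $L_0$. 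Summing over $x_{2,2}$ with $\mbox{rank}_p=\alpha$ thus identifies a factor $G_\alpha^{j-i}(m,\lambda_2)$, and Proposition~\ref{prop:gauss} evaluates it: when $p^2\mid m$, $p\mid m$ forces $G_\alpha^{j-i}(m,\lambda_2)=G_\alpha^{j-i}(0)$ independently of $\lambda_2$; when $p^2\nmid m$ the value is $G_\alpha^{j-i}(0)$ or $G_\alpha^{j-i}(m)$ according as $m\lambda_2\equiv 0$ or $\not\equiv 0\pmod p$.

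Next I would reorganize the $(M,\lambda)$-sum using the factorisation $\Gamma(\delta_{i,j})\backslash\Gamma_n\simeq (H_{i,j}\backslash\mbox{GL}_n(\Z))\times(\Gamma_{n,0}\backslash\Gamma_n)$ given by the upper-left block, splitting $\sum_{M}\sum_{\lambda\in L_0}$ as $\sum_{A}\sum_{M'}\sum_{\lambda\in L_0}$ and changing variables $\lambda\mapsto {}^tA\lambda$ in the inner sum. For $p^2\mid m$ the Gauss factor is $\lambda_2$-independent, so Lemma~\ref{lem:Blambda} applies verbatim to $L_0=(p^2\Z)^i\times(p\Z)^{j-i}\times\Z^{n-j}$ and expresses the remainder as a linear combination of the three model sums $\sum_{\lambda\in\Z^n} B(p^a\lambda)$ for $a=0,1,2$, with coefficients $a_0,a_1,a_2$ satisfying the three cardinality identities stated in the conclusion. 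For $p^2\nmid m$ I would first split the $\lambda$-sum over $L_0=(p\Z)^i\times\Z^{n-i}$ according to whether $\lambda_2\in(p\Z)^{j-i}$, writing it as the sum over $(p\Z)^i\times\Z^{n-i}$ weighted by $G_\alpha^{j-i}(m)$ plus the sum over $(p\Z)^j\times\Z^{n-j}$ weighted by $G_\alpha^{j-i}(0)-G_\alpha^{j-i}(m)$. These two lattices are attached to $\delta_i$ and $\delta_j$ respectively, so passing from $\Gamma(\delta_{i,j})$-cosets to $\Gamma(\delta_i)$- or $\Gamma(\delta_j)$-cosets introduces the indices $[\Gamma(\delta_i):\Gamma(\delta_{i,j})]$ and $[\Gamma(\delta_j):\Gamma(\delta_{i,j})]$. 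An analogue of Lemma~\ref{lem:Blambda} for these simpler lattices (with only the two model sums $\sum B(\lambda)$ and $\sum B(p\lambda)$) together with the cardinalities $g_p(n-1,j)$, $g_p(n-1,j-1)$, $g_p(n-1,i)$, $g_p(n-1,i-1)$ from Lemma~\ref{lem:value_a} then provides the coefficients of the two surviving model sums.

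To identify each model sum with an Eisenstein series I would use the cocycle identity
\[
 j(k,m;M,p^a\lambda)(\tau,z)=j(k,mp^{2a};M,\lambda)(\tau,p^a z),
\]
which follows directly from the explicit $|_{k,m}$-formula recalled in Section~\ref{ss:Jacobi_group}. Combined with the definition $E_{k,m}^{(n)}(\tau,z)=\sum_{M'\in\Gamma_{n,0}\backslash\Gamma_n}\sum_{\lambda\in\Z^n}j(k,m;M',\lambda)(\tau,z)$, this rewrites the three (resp.\ two) model sums as $E_{k,m/p^2}^{(n)}(\tau,p^2 z)$, $E_{k,m}^{(n)}(\tau,p z)$, and $E_{k,m p^2}^{(n)}(\tau,z)$. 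Assembling the prefactor $p^{-k(2n-i-j)+(n-j)(n-i+1)}$ from Lemma~\ref{lem:kij_yam}, the Gauss factors from Proposition~\ref{prop:gauss}, and the coefficients from Lemmas~\ref{lem:Blambda} and~\ref{lem:value_a} yields exactly the two displayed formulas.

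The main obstacle is the bookkeeping in the $p^2\nmid m$ case, where one must verify that the contributions from the two sub-lattices $(p\Z)^i\times\Z^{n-i}$ and $(p\Z)^j\times\Z^{n-j}$, together with the change of coset space from $\Gamma(\delta_{i,j})$ to $\Gamma(\delta_i)$ and $\Gamma(\delta_j)$, conspire so that the $\sum_{\lambda\in\Z^n}B(p^2\lambda)$ term of Lemma~\ref{lem:Blambda} (which would otherwise produce an $E_{k,m/p^2}^{(n)}|U(p^2)$ contribution) is cancelled out, leaving only $E_{k,m}^{(n)}|U(p)$ and $E_{k,m p^2}^{(n)}$, with the precise coefficients $g_p(n-1,j)$ and $p^{n-j}g_p(n-1,j-1)$ (and their $i$-counterparts).
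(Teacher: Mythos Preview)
Your proposal is correct and follows essentially the same route as the paper's proof: sum over $x$ in Lemma~\ref{lem:kij_yam} to extract the Gauss factor, factor $\Gamma(\delta_{i,j})\backslash\Gamma_n$ through $H_{i,j}\backslash\mbox{GL}_n(\Z)$, then apply Lemma~\ref{lem:Blambda} (respectively its two-term analogue for the lattices $(p\Z)^i\times\Z^{n-i}$ and $(p\Z)^j\times\Z^{n-j}$) and identify the resulting model sums with the three Eisenstein series. Two minor corrections: your displayed cocycle identity should read $j(k,m;M,p^a\lambda)(\tau,z)=j(k,mp^{2a};M,\lambda)(\tau,p^{-a}z)$ with exponent $-a$ rather than $a$ (your subsequent identifications are nonetheless correct, since in the actual application the base point is $(\tau,p^2z)$ or $(\tau,pz)$); and your final worry about a spurious $\sum B(p^2\lambda)$ term is unfounded, because in the $p^2\nmid m$ case Lemma~\ref{lem:kij_yam} already hands you the lattice $(p\Z)^i\times\Z^{n-i}$, whose analogue of Lemma~\ref{lem:Blambda} produces only two model sums from the outset, so no cancellation is required.
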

\begin{proof}
First we assume $p^2|m$.
In this case the sum $G_{\alpha}^{j-i}(m,\lambda')$ equals to $G_{\alpha}^{j-i}(0)$
for any $\lambda' \in \Z^{j-i}$.
Hence
due to Lemma~\ref{lem:kij_yam},
we obtain
\begin{eqnarray*}
 K_{i,j}^{\alpha}
 &=&
   p^{-k(2n-i-j)+(n-j)(n-i+1)} G_{\alpha}^{j-i}(0)
   \sum_{M \in \Gamma(\delta_{i,j}) \backslash \Gamma_n} \\
   && \times 
   \sum_{\lambda \in (p^2 \Z)^i \times (p \Z)^{j-i} \times \Z^{n-j}}
   j\!\left(k,\frac{m}{p^2}; M , \lambda\right)\!(\tau,p^2z).
\end{eqnarray*}
If $\{A_l\}_l$ is a complete set of representatives of $H_{i,j}  \backslash \mbox{GL}_n(\Z)$, then
the set $\left\{\begin{pmatrix} A_l & 0 \\ 0 & {^t A_l}^{-1} \end{pmatrix} \right\}_l$
is a complete set of representatives of $\Gamma(\delta_{i,j})\backslash \Gamma_{n,0}$.
Hence we have
\begin{eqnarray*}
 K_{i,j}^{\alpha}
 &=&
   p^{-k(2n-i-j)+(n-j)(n-i+1)} G_{\alpha}^{j-i}(0)
   \sum_{M \in \Gamma_{n,0} \backslash \Gamma_n} \sum_{A \in H_{i,j}  \backslash GL_n(\Z)} \\
   && \times 
   \sum_{\lambda \in (p^2 \Z)^i \times (p \Z)^{j-i} \times \Z^{n-j}}
   j\!\left(k,\frac{m}{p^2}; M , {^t A} \lambda\right)\!(\tau,p^2z).
% &=& 
%   p^{-k(2n-i-j)+(n-j)(n-i+1)} G_{\alpha}^{j-i}(0)
%   \sum_{M \in \Gamma_{n,0} \backslash \Gamma_n} \\
%   && \times 
%   \sum_{L}
%   \sum_{\lambda \in L}
%   j\!\left(k,\frac{m}{p^2}; M , \lambda\right)\!(\tau,p^2z),
\end{eqnarray*}
%where, in the second summation, $L$ runs over all lattices in $\Z^n$ such that
%$\Z^n/L \cong \left(\Z/p^2\Z \right)^i \times \left(\Z/p\Z \right)^{j-i}$.
%Now for any function $B(\lambda)$ on $\lambda \in \Z^n$, we have
%\begin{eqnarray*}
%\sum_{\begin{smallmatrix} L \\ \Z^n/L \cong (\Z/p^2\Z)^i \times (\Z/p\Z)^{j-i}
%             \end{smallmatrix}} \sum_{\lambda \in L} B(\lambda)
%    &=& a_0 \sum_{\lambda \in \Z^n} B(\lambda)
%   + a_1 \sum_{\lambda \in \Z^n} B(p\lambda) 
%   + a_2 \sum_{\lambda \in \Z^n} B(p^2 \lambda),
%\end{eqnarray*}
%if the above sums are absolutely convergent. 

%Let $a_0$, $a_1$ and $a_2$ be integers which satisfy
%\begin{eqnarray*}
% a_0 + a_1 + a_2 = \left| H_{i,j} \backslash  \mbox{GL}_n(\Z) \right|,\
% a_0 + a_1 =\left| H_{i,j} \backslash S_{i} \right|
%\mbox{ and }
% a_0 =\left| H_{i,j} \backslash S_{i,j} \right|.
%\end{eqnarray*}
From Lemma~\ref{lem:Blambda} we obtain
\begin{eqnarray*}
 K_{i,j}^{\alpha}
 &=&
    p^{-k(2n-i-j)+(n-j)(n-i+1)} G_{\alpha}^{j-i}(0)
   \sum_{M \in \Gamma_{n,0} \backslash \Gamma_n}
   \left\{
     a_0 \sum_{\lambda \in \Z^n} j\!\left(k,\frac{m}{p^2}; M , \lambda\right)\!(\tau,p^2z) \right. \\
  && \left.
     + a_1 \sum_{\lambda \in \Z^n} j\!\left(k,\frac{m}{p^2}; M , p \lambda\right)\!(\tau,p^2z)
     + a_2 \sum_{\lambda \in \Z^n} j\!\left(k,\frac{m}{p^2}; M , p^2 \lambda\right)\!(\tau,p^2z)
   \right\}.
\end{eqnarray*}
Due to the two identities
\begin{eqnarray*}
 j\!\left(k,\frac{m}{p^2}; M , p \lambda\right)\!(\tau,p^2z)
 =
 j\!\left(k,m; M ,  \lambda\right)\!(\tau,p z)
\end{eqnarray*}
and
\begin{eqnarray*}
 j\!\left(k,\frac{m}{p^2}; M , p^2 \lambda\right)\!(\tau,p^2z)
 =
 j\!\left(k,m p^2; M , \lambda\right)\!(\tau, z),
\end{eqnarray*}
we have
\begin{eqnarray*}
   K_{i,j}^{\alpha}
 &=&
   p^{-k(2n-i-j)+(n-j)(n-i+1)} G_{\alpha}^{j-i}(0)\\
   &&
   \times
   \left\{ a_0 E_{k,\frac{m}{p^2}}^{(n)}(\tau,p^2 z)
            + a_1 E_{k,m}^{(n)}(\tau,p z) 
            + a_2 E_{k,mp^2}^{(n)}(\tau,z) 
   \right\}.
\end{eqnarray*}
Thus we showed this lemma for the case $p^2|m$.

We now assume $p^2 {\not |} m$.
%The proof for the case $p^2 {\not |} m$ is similar to the case $p^2|m$.
In this case the sum $G_{\alpha}^{j-i}(m,\lambda')$ equals to $G_{\alpha}^{j-i}(0)$
or $G_{\alpha}^{j-i}(m)$, according as $\lambda' \in p \Z^{j-i}$ or $\lambda' {\not \in} p \Z^{j-i}$.
Thus
due to Lemma~\ref{lem:kij_yam} we have
\begin{eqnarray*}
 K_{i,j}^{\alpha}
 &=&
   p^{-k(2n-i-j)+(n-j)(n-i+1)}
   \left\{
    (G_{\alpha}^{j-i}(0) - G_{\alpha}^{j-i}(m))
   \sum_{M \in \Gamma(\delta_{i,j}) \backslash \Gamma_n}
      \sum_{\lambda \in (p\Z)^j \times \Z^{n-j}}
   \right. \\
   &&
   \left.
   \times
   j\!\left(k, m; M , \lambda\right)\!(\tau,p z)
   +
   G_{\alpha}^{j-i}(m)
   \sum_{M \in \Gamma(\delta_{i,j}) \backslash \Gamma_n} 
   \sum_{\lambda \in (p\Z)^i \times \Z^{n-i}}
   j\!\left(k, m; M , \lambda\right)\!(\tau,p z)
   \right\} .
\end{eqnarray*}
Here we have
\begin{eqnarray*}
 &&
      \sum_{M \in \Gamma(\delta_{i,j}) \backslash \Gamma_n}
      \sum_{\lambda \in (p\Z)^j \times \Z^{n-j}}
      j\!\left(k, m; M , \lambda\right)\!(\tau,p z)  \\
 &=&
     \left[\Gamma(\delta_{j}) ; \Gamma(\delta_{i,j}) \right]
     \sum_{M \in \Gamma(\delta_{j}) \backslash \Gamma_n}
      \sum_{\lambda \in (p\Z)^j \times \Z^{n-j}}
      j\!\left(k, m; M , \lambda\right)\!(\tau,p z) \\
 &=&
     \left[\Gamma(\delta_{j}) ; \Gamma(\delta_{i,j}) \right]
     \sum_{M \in \Gamma_{n,0} \backslash \Gamma_n}
     \sum_{A \in H_j \backslash GL_n(\Z)}
      \sum_{\lambda \in (p\Z)^j \times \Z^{n-j}}
      j\!\left(k, m; M , {^t A} \lambda\right)\!(\tau,p z) \\
 &=&
    \left[\Gamma(\delta_{j}) ; \Gamma(\delta_{i,j}) \right]
    \left\{
      g_p(n-1,j) E_{k,m}^{(n)}(\tau,pz)
      + p^{n-j} g_p(n-1,j-1) E_{k,mp^2}^{(n)}(\tau,z)
    \right\}
\end{eqnarray*}
and
\begin{eqnarray*}
 &&
      \sum_{M \in \Gamma(\delta_{i,j}) \backslash \Gamma_n}
      \sum_{\lambda \in (p\Z)^i \times \Z^{n-i}}
      j\!\left(k, m; M , \lambda\right)\!(\tau,p z)  \\
 &=&
    \left[\Gamma(\delta_{i}) ; \Gamma(\delta_{i,j}) \right]
    \left\{
      g_p(n-1,i) E_{k,m}^{(n)}(\tau,pz)
      + p^{n-i} g_p(n-1,i-1) E_{k,mp^2}^{(n)}(\tau,z)
    \right\}.
\end{eqnarray*}
Hence we showed this lemma also for the case $p^2 {\not |} m$.
\end{proof}

The following proposition has been shown by Yamazaki~\cite[Theorem 3.3]{Ya2}
for the case $m=1$.
We generalize it for any positive-integer $m$.
\begin{prop}\label{prop:E_linear}
 For any natural number $l$ $(0 \leq l \leq n)$, the form
 $E_{k,m}^{(n)} | V_{l,n-l}(p^2)$ is a linear combination of
 $E_{k,\frac{m}{p^2}}^{(n)}|U(p^2)$, $E_{k,m}^{(n)}| U(p)$ and $E_{k,mp^2}^{(n)}$
 over $\C$.
\end{prop}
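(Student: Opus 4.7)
The plan is to chain together the two preceding lemmas. By Lemma~\ref{lem:E_K}, the form $E_{k,m}^{(n)}|V_{l,n-l}(p^2)$ decomposes as a finite sum
\begin{eqnarray*}
E_{k,m}^{(n)}|V_{l,n-l}(p^2) &=& \sum_{0\le i\le j\le n} K_{i,j}^{l-i-n+j}.
\end{eqnarray*}
Here the decomposition came from the double-coset decomposition in Proposition~\ref{prop:Ya2_decomposition}, and the parameter $\alpha=l-i-n+j$ records the $p$-rank type that shows up when $V_{l,n-l}(p^2)$ is applied.

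Next I would apply Lemma~\ref{lem:K_linear} termwise. That lemma treats the two cases $p^2\mid m$ and $p^2\nmid m$ separately, but in either case it expresses $K_{i,j}^{\alpha}$ as a $\C$-linear combination of precisely the three forms $E_{k,m/p^2}^{(n)}|U(p^2)$, $E_{k,m}^{(n)}|U(p)$, and $E_{k,mp^2}^{(n)}$ (with the convention that $E_{k,m/p^2}^{(n)}$ is identically zero when $p^2\nmid m$, so only two of the three forms appear in that case). The coefficients are explicit polynomials in $p,k$ built from the Gauss-type sums $G_{\alpha}^{j-i}(0)$, $G_{\alpha}^{j-i}(m)$ evaluated in Proposition~\ref{prop:gauss}, together with the subgroup indices and cardinalities computed in Lemma~\ref{lem:value_a} and Lemma~\ref{lem:Blambda}.

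Substituting these expressions back into the sum over $(i,j)$ produced by Lemma~\ref{lem:E_K} gives $E_{k,m}^{(n)}|V_{l,n-l}(p^2)$ as a $\C$-linear combination of the three forms $E_{k,m/p^2}^{(n)}|U(p^2)$, $E_{k,m}^{(n)}|U(p)$ and $E_{k,mp^2}^{(n)}$, which is exactly the claim. Since Lemma~\ref{lem:E_K} and Lemma~\ref{lem:K_linear} have already been established, the proof is essentially a bookkeeping exercise; no genuine obstacle remains, although one should note that the explicit coefficients (which will be needed later in order to match the identity of Theorem~\ref{thm:eins}) are most conveniently packaged by keeping the three-term basis throughout rather than collapsing terms, and by using Lemma~\ref{lemma:linear_independent} to guarantee that the resulting expression is unique.
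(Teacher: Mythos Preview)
Your proposal is correct and matches the paper's own proof essentially verbatim: the paper simply states that the proposition follows from Lemma~\ref{lem:E_K} and Lemma~\ref{lem:K_linear}, which is exactly the two-step chaining you describe. The additional remarks you make about the explicit coefficients and the use of Lemma~\ref{lemma:linear_independent} for uniqueness are not needed for this proposition (it only asserts existence of a linear combination), though they correctly anticipate what is used later in Proposition~\ref{prop:EV}.
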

\begin{proof}
%The proof of this proposition is an analogue to~\cite[Theorem 3.3]{Ya2}.
%\begin{eqnarray*}
%   E_{k,m}^{(n)} | V_{l,n-l}(p^2)
% &=&
%   \sum_{\begin{smallmatrix} i,j \\ 0 \leq i \leq j \leq n \end{smallmatrix}}
%    K_{i,j}^{l-i-n+j}.
%\end{eqnarray*}
This proposition follows from Lemma~\ref{lem:E_K} and Lemma~\ref{lem:K_linear}.
\end{proof}

\subsection{Relation among Jacobi-Eisenstein series}%\label{ss:commu_Phi}
%\subsection{Commutativity with the Siegel $\Phi$-operators}%\label{ss:commu_Phi}

Now we shall calculate the coefficients in
the linear combinations in Proposition~\ref{prop:E_linear}.
This calculation can be directly done by using the values of
$G_{j-i}^{\alpha}(m)$ and $g_p(a,b)$.
However, we will here use the Siegel $\Phi$-operators
for simplicity.
%The original idea of the use of the Siegel $\Phi$-operator
%has been written in~\cite{Ya2}.

We set
 \begin{eqnarray*}
   \begin{pmatrix}
     a_{0,m,p,k}\\
     a_{1,m,p,k}\\
     a_{2,m,p,k}
   \end{pmatrix}
  &:=&
   \begin{cases}
    \begin{pmatrix}
      p^{-2k+2}\\
      p^{-k}(p-1)\\
      1 
    \end{pmatrix}
    &
    \mbox{if } p^2|m,
    \\
    \begin{pmatrix}
      0 \\
      p^{-2k+2}+p^{-k+1}-p^{-k}\\
      1 
    \end{pmatrix}
    &
    \mbox{if } p^2 {\not |}m \mbox{ and } p|m,
    \\
    \begin{pmatrix}
      0 \\
      p^{-2k+2}-p^{-k}\\
      p^{-k+1}+1
    \end{pmatrix}
    &
    \mbox{if } p {\not |}m.
   \end{cases}
 \end{eqnarray*}
%where
% \begin{eqnarray*}
%  a_{0,m,p,k} &=&
%  \begin{cases}
%   p^{-2k+2} & \mbox{ if } p^2|m , \\
%   0                & \mbox{ if } p^2 {\not |} m,
%  \end{cases} \\
%  a_{1,m,p,k} &=&
%  \begin{cases}
%   p^{-k}(p-1) & \mbox{ if } p^2|m , \\
%   p^{-2k+2} + p^{-k+1} - p^{-k} & \mbox{ if } p^2 {\not |} m  \mbox{ and } p|m , \\
%   p^{-2k+2} - p^{-k} & \mbox{ if } p {\not |} m,
%  \end{cases} \\
%  a_{2,m,p,k} &=&
%  \begin{cases}
%   1 & \mbox{ if } p|m , \\
%   p^{-k+1} + 1 & \mbox{ if } p {\not |} m .
%  \end{cases}
% \end{eqnarray*}

\begin{lemma}\label{lem:E_V_one}
For the Jacobi-Eisenstein series $E_{k,m}^{(1)}$ of degree $1$,
we have the identity
\begin{eqnarray*}
 E_{k,m}^{(1)} | \left( V_{0,1}(p^2), V_{1,0}(p^2) \right)
 &=&
 \left(E_{k,\frac{m}{p^2}}^{(1)}|U(p^2), E_{k,m}^{(1)}|U(p), E_{k,mp^2}^{(1)} \right)
 \begin{pmatrix}
  0         & a_{0,m,p,k} \\
  p^{-k} & a_{1,m,p,k} \\
  0         & a_{2,m,p,k}
 \end{pmatrix}.
\end{eqnarray*}

\end{lemma}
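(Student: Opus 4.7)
\emph{Plan.} The identity has two columns; I handle them separately. Note first that by Lemma~\ref{lemma:linear_independent} the three forms $E_{k,m/p^2}^{(1)}|U(p^2)$, $E_{k,m}^{(1)}|U(p)$, $E_{k,mp^2}^{(1)}$ on the right-hand side are linearly independent (with the convention that $E_{k,m/p^2}^{(1)}$ is zero if $p^2\nmid m$), so once one knows that the left-hand side lies in their span, the coefficients on the right are uniquely determined.

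\emph{First column ($V_{0,1}(p^2)$).} The matrix $\mbox{diag}(p,p)=p\cdot 1_2$ is central in $\mbox{GSp}_1^+(\R)$, so the double coset $\Gamma_1(p\cdot 1_2)\Gamma_1$ reduces to the single coset $\Gamma_1\cdot(p\cdot 1_2)$. A direct evaluation of the slash operator $|_{k,m}$ from Section~\ref{ss:Jacobi_group} (with $C=0$, $D=p$, $\nu(g)=p^2$, $\lambda=\mu=\kappa=0$) gives $(E_{k,m}^{(1)}|_{k,m}[p\cdot 1_2,(0,0),0])(\tau,z) = p^{-k}E_{k,m}^{(1)}(\tau,pz) = p^{-k}(E_{k,m}^{(1)}|U(p))(\tau,z)$, which is exactly the first column $(0,p^{-k},0)^T$ of the matrix in the statement.

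\emph{Second column ($V_{1,0}(p^2)$).} Proposition~\ref{prop:E_linear} already guarantees that $E_{k,m}^{(1)}|V_{1,0}(p^2)$ lies in the $\C$-span of the three basis forms. To compute the coefficients explicitly I would specialise Lemma~\ref{lem:E_K} at $n=1$, $l=1$, obtaining $E_{k,m}^{(1)}|V_{1,0}(p^2) = K_{0,0}^0 + K_{0,1}^1 + K_{1,1}^0$ (the three pairs $(i,j)$ with $0\le i\le j\le 1$ giving $\alpha=l-i-n+j$ equal to $0,1,0$ respectively), and then evaluate each $K_{i,j}^\alpha$ using Lemma~\ref{lem:K_linear}. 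The ingredients required are the Gauss sums $G_\alpha^{j-i}(0)$ and, for $p^2\nmid m$, also $G_\alpha^{j-i}(m)$ from Proposition~\ref{prop:gauss}; the cardinalities $|H_{i,j}\backslash\mbox{GL}_1(\Z)|$, $|H_{i,j}\backslash S_i|$, $|H_{i,j}\backslash S_{i,j}|$ from Lemma~\ref{lem:value_a}; the indices $[\Gamma(\delta_j):\Gamma(\delta_{i,j})]$ and $[\Gamma(\delta_i):\Gamma(\delta_{i,j})]$; and the values $g_p(0,i)$, which vanish for $i\ge 1$ and collapse several branches in the $p^2\nmid m$ formula. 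At $n=1$ many of these structures degenerate because $\mbox{GL}_1(\Z)=\{\pm 1\}$, and summing the three $K_{i,j}^\alpha$ contributions yields the second column.

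\emph{Main obstacle.} The computation in the second column bifurcates according to the $p$-adic valuation of $m$: Lemma~\ref{lem:K_linear} gives different formulas for $p^2|m$ and for $p^2\nmid m$, and within the latter the Gauss sum $G_\alpha^{j-i}(m)$ again splits at $p|m$ versus $p\nmid m$. Thus the proof really consists of three parallel arithmetic verifications, one for each regime of $v_p(m)$, each culminating in the identification of the coefficient triple $(a_{0,m,p,k},a_{1,m,p,k},a_{2,m,p,k})^T$ defined just before the statement. This final matching is elementary but error-prone, owing to the many small cardinalities and index calculations at $n=1$; it is the only routine-but-tedious step of the argument.
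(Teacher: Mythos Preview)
Your proposal is correct and follows essentially the same route as the paper. The paper likewise dispatches the first column in one line (the double coset is a single coset, giving $p^{-k}E_{k,m}^{(1)}|U(p)$), and for the second column invokes Lemma~\ref{lem:E_K} to write $E_{k,m}^{(1)}|V_{1,0}(p^2)=K_{0,0}^0+K_{0,1}^1+K_{1,1}^0$, then evaluates each $K_{i,j}^\alpha$ via Lemmas~\ref{lem:value_a} and~\ref{lem:K_linear}, splitting into the cases $p^2\mid m$, $p\mid m$ but $p^2\nmid m$, and $p\nmid m$ exactly as you outline. The only difference is that the paper records the explicit values of the three $K_{i,j}^\alpha$ in each regime and sums them, while you stop at describing the computation; your appeals to Proposition~\ref{prop:E_linear} and Lemma~\ref{lemma:linear_independent} at the outset are harmless scaffolding but not actually needed once the direct evaluation is carried out.
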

\begin{proof}
Since
$\Gamma_1(p^2 1_2) \Gamma_1
  = \Gamma_1 (p^2 1_2)$,
the relation
$E_{k,m}^{(1)}|V_{0,1}(p^2) = p^{-k} E_{k,m}^{(1)}|U(p)$
is obvious.

From Lemma~\ref{lem:E_K} we obtain
\begin{eqnarray*}
 E_{k,m}^{(1)}|V_{1,0}(p^2)
 &=&
 K_{0,0}^0 + K_{0,1}^1 + K_{1,1}^0.
\end{eqnarray*}

Due to Lemma~\ref{lem:value_a} and Lemma~\ref{lem:K_linear}, we have
\begin{eqnarray*}
 K_{0,0}^0 &=&
 \begin{cases}
   p^{-2k+2} E_{k,\frac{m}{p^2}}^{(1)}(\tau,p^2 z) & \mbox{ if } p^2 | m, \\
   p^{-2k+2} E_{k,m}^{(1)}(\tau,pz) & \mbox{ if } p^2 {\not | }m,
 \end{cases} \\
 K_{0,1}^1 &=&
 \begin{cases}
   p^{-k}(p-1) E_{k,m}^{(1)}(\tau,pz) & \mbox{ if } p | m, \\
   p^{-k+1} E_{k,mp^2}^{(1)}(\tau,z) - p^{-k} E_{k,m}^{(1)}(\tau,pz) & \mbox{ if } p {\not |} m,
 \end{cases} \\
 K_{1,1}^0 &=&
   E_{k, m p^2}^{(1)}(\tau,z).
\end{eqnarray*}
Therefore this lemma follows.
%\todo{more explaination}
\end{proof}

%Let $B_{l,l+1}(X_l) \in M_{l,l+1}(\C[X_l])$ and
%$B_{2,n+1}(X_2,...,X_n) = \displaystyle{\prod_{l=2}^n B_{l,l+1}(X_l) \in M_{2,n+1}(\C[X_2,...,X_n])}$
Let $B_{l,l+1}(X_l)$,
$B_{2,n+1}(X_2,...,X_n)$ and $A_{2,n+1}^{p,k}$ be matrices introduced in Section~\ref{ss:Satake_Siegel_Phi}.
We recall
$    A_{2,n+1}^{p,k}
 =
    B_{2,n+1}(p^{2-k},p^{3-k},...,p^{n-k})
$
 and the matrix $A_{2,n+1}^{p,k}$ has the size $2$ times $(n+1)$.

The following proposition has been shown by Yamazaki~\cite[Theorem 4.1]{Ya2}
for the case $m=1$.
We generalize it for any positive-integer $m$.
\begin{prop}\label{prop:EV}
For any Jacobi-Eisenstein series $E_{k,m}^{(n)}$ of degree $n$,
the following identity holds
\[
  E_{k,m}^{(n)} | \left(V_{0,n}(p^2), ..., V_{n,0}(p^2)\right)\\
 =
  \left(E_{k,\frac{m}{p^2}}^{(n)}|U(p^2),
          E_{k, m}^{(n)}|U(p),
          E_{k, m p^2}^{(n)}
  \right)
  \begin{pmatrix}
   0         & a_{0,m,p,k} \\
   p^{-k} & a_{1,m,p,k} \\
   0         & a_{2,m,p,k}
  \end{pmatrix}
  A_{2,n+1}^{p,k}.
\]
%Here $A_{2,n+1}^{p,k}$ depends only on $p$ and $k$.
\end{prop}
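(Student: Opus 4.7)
The plan is to prove Proposition~\ref{prop:EV} by induction on the degree $n$, with base case $n=1$ already supplied by Lemma~\ref{lem:E_V_one}. The inductive step will exploit the Siegel $\Phi$-operator together with the fact that, by the very definition of $A_{2,n+1}^{p,k}$ as a telescoping product of the matrices $B_{l,l+1}$, one has the factorization
\[
  A_{2,n+1}^{p,k} \;=\; A_{2,n}^{p,k}\cdot B_{n,n+1}(p^{n-k}).
\]
This factorization meshes exactly with Lemma~\ref{lem:Phi_Tauschen}, which computes the effect of $\Phi$ on $\phi\,|\,(V_{0,n}(p^2),\ldots,V_{n,0}(p^2))$ via precisely the matrix $B_{n,n+1}(p^{n-k})$.

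Fix $n\geq 2$ and assume the proposition holds in degree $n-1$. By Proposition~\ref{prop:E_linear} there exists a $3\times(n+1)$ complex matrix $C = C(n,m,p,k)$ with
\[
  E_{k,m}^{(n)}\,\big|\,(V_{0,n}(p^2),\ldots,V_{n,0}(p^2))
   = \bigl(E_{k,m/p^2}^{(n)}|U(p^2),\,E_{k,m}^{(n)}|U(p),\,E_{k,mp^2}^{(n)}\bigr)\,C.
\]
Denoting by $M$ the $3\times 2$ matrix appearing in Lemma~\ref{lem:E_V_one}, the proposition amounts to the identity $C = M\cdot A_{2,n+1}^{p,k}$. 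I would apply $\Phi$ to both sides. On the right, the identity $\Phi(E_{k,m'}^{(n)}) = E_{k,m'}^{(n-1)}$ (noted in the proof of Lemma~\ref{lemma:linear_independent}) together with the fact that $\Phi$ commutes with $U(d)$ (immediate from the definitions of both operators) turns that side into the analogous expression in degree $n-1$, still multiplied by the very same matrix $C$. On the left, Lemma~\ref{lem:Phi_Tauschen} followed by the induction hypothesis produces
\[
  \bigl(E_{k,m/p^2}^{(n-1)}|U(p^2),\,E_{k,m}^{(n-1)}|U(p),\,E_{k,mp^2}^{(n-1)}\bigr)\cdot M\cdot A_{2,n}^{p,k}\cdot B_{n,n+1}(p^{n-k}),
\]
and the trailing product of matrices collapses to $M\cdot A_{2,n+1}^{p,k}$ by the factorization above.

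Equating the two expressions yields
\[
  \bigl(E_{k,m/p^2}^{(n-1)}|U(p^2),\,E_{k,m}^{(n-1)}|U(p),\,E_{k,mp^2}^{(n-1)}\bigr)\cdot\bigl(C - M\cdot A_{2,n+1}^{p,k}\bigr) = 0,
\]
so it remains to observe that the three entries in the row vector are linearly independent in $J_{k,mp^2}^{(n-1)}$. Rewriting them as $E_{k,(mp^2)/d^2}^{(n-1)}|U(d)$ for $d = p^2,p,1$, this is exactly a case of Lemma~\ref{lemma:linear_independent}, and we conclude $C = M\cdot A_{2,n+1}^{p,k}$. The main subtlety I foresee is bookkeeping around the convention that $E_{k,m/p^2}^{(n)}\equiv 0$ when $p^2\nmid m$: in that situation the first column of $M$ is zero and the first entry of the row vector vanishes, so the identity reads correctly only on the surviving components; happily, Lemma~\ref{lemma:linear_independent} still supplies the linear independence of the nonzero forms that remain, and the argument goes through without modification.
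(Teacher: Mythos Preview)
Your proof is correct and follows essentially the same route as the paper: use Proposition~\ref{prop:E_linear} to get an undetermined coefficient matrix, push the identity down via $\Phi$ using Lemma~\ref{lem:Phi_Tauschen} and $\Phi(E_{k,m}^{(n)})=E_{k,m}^{(n-1)}$, invoke Lemma~\ref{lem:E_V_one} at the bottom, and conclude by linear independence (Lemma~\ref{lemma:linear_independent}). The only cosmetic difference is that the paper applies $\Phi^{n-1}$ in one stroke to land directly in degree~$1$, whereas you phrase the same descent as an induction stepping down one degree at a time; the factorization $A_{2,n+1}^{p,k}=A_{2,n}^{p,k}\,B_{n,n+1}(p^{n-k})$ you isolate is exactly what makes the two presentations equivalent.

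One small slip in your final paragraph: when $p^2\nmid m$ it is the first \emph{row} of $M$ that vanishes (since $a_{0,m,p,k}=0$), not the first column. Your conclusion is unaffected: with the first entry of the row vector and the first row of $M$ both zero, the identity is determined by the remaining two rows, and Lemma~\ref{lemma:linear_independent} still gives the needed independence of $E_{k,m}^{(n-1)}|U(p)$ and $E_{k,mp^2}^{(n-1)}$.
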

\begin{proof}
Let m be a positive-integer. Let $\Phi$ be the Siegel $\Phi$-operator
for Jacobi forms introduced in Section~\ref{ss:Phi_Jacobi}.
From~Lemma~\ref{lem:Phi_Tauschen} and
from the fact that $\Phi(E_{k,m}^{(n)}) = E_{k,m}^{(n-1)}$,
we have
\begin{eqnarray*}
   \Phi(E_{k,m}^{(n)}|(V_{0,n}(p^2),...,V_{n,0}(p^2)))
  &=&
   E_{k,m}^{(n-1)}|(V_{0,n-1}(p^2),...,V_{n-1,0}(p^2)) B_{n,n+1}(p^{n-k}).
\end{eqnarray*}
Hence
by using Siegel $\Phi$-operator $n-1$ times and by using Lemma~\ref{lem:E_V_one}, we have
\begin{eqnarray*}
  &&
   \Phi^{(n-1)}(E_{k,m}^{(n)}|(V_{0,n}(p^2),...,V_{n,0}(p^2))) \\
  &=&
   E_{k,m}^{(1)}|(V_{0,1}(p^2),V_{1,0}(p^2)) B_{2,n+1}(p^{2-k},p^{3-k},...,p^{n-k}) \\
  &=&
    \left(E_{k,\frac{m}{p^2}}^{(1)}|U(p^2), E_{k,m}^{(1)}|U(p), E_{k,mp^2}^{(1)} \right)
    \begin{pmatrix}
     0         & a_{0,m,p,k} \\
     p^{-k} & a_{1,m,p,k} \\
     0         & a_{2,m,p,k}
    \end{pmatrix}
    B_{2,n+1}(p^{2-k},p^{3-k},...,p^{n-k}).
\end{eqnarray*}

On the other hand,
due to Proposition~\ref{prop:E_linear}, there exists a $3 \times (n+1)$ matrix $B_n^k$
which satisfies
\begin{eqnarray*}
    E_{k,m}^{(n)}|(V_{0,n}(p^2),...,V_{n,0}(p^2))
  &=&
    \left(E_{k,\frac{m}{p^2}}^{(n)}|U(p^2), E_{k,m}^{(n)}|U(p), E_{k,mp^2}^{(n)} \right)
    B_n^k.
\end{eqnarray*}
From this identity we have
\begin{eqnarray*}
   \Phi^{(n-1)}(E_{k,m}^{(n)}|(V_{0,n}(p^2),...,V_{n,0}(p^2))) 
  &=&
   \left(E_{k,\frac{m}{p^2}}^{(1)}|U(p^2), E_{k,m}^{(1)}|U(p), E_{k,mp^2}^{(1)} \right) B_n^k.
\end{eqnarray*}

Because three forms $E_{k,\frac{m}{p^2}}^{(1)}|U(p^2)$, $E_{k,m}^{(1)}|U(p)$
and $E_{k,mp^2}^{(1)}$ are linearly independent (see Lemma~\ref{lemma:linear_independent}),
we obtain
\begin{eqnarray*}
   B_n^k
 &=&
    \begin{pmatrix}
     0         & a_{0,m,p,k} \\
     p^{-k} & a_{1,m,p,k} \\
     0         & a_{2,m,p,k}
    \end{pmatrix}
    B_{2,n+1}(p^{2-k},p^{3-k},...,p^{n-k})
 \ =\
    \begin{pmatrix}
     0         & a_{0,m,p,k} \\
     p^{-k} & a_{1,m,p,k} \\
     0         & a_{2,m,p,k}
    \end{pmatrix}
    A_{2,n+1}^{p,k}.\\ 
\end{eqnarray*}
%Therefore, the matrix $A_{2,n+1}^{p,k}$ in Proposition~\ref{prop:EV} and in Theorem~\ref{thm:eins} is
%\begin{eqnarray*}
%  A_{2,n+1}^{p,k}
% &=&
%  B_{2,n+1}(p^{2-k},p^{3-k},...,p^{n-k}).
%\end{eqnarray*}
Thus this proposition follows for any positive-integer $m$.
\end{proof}

%%%%%%%%%%%%%%%%%%%%%%%%%%%%%%%%%%%%%%%%%%%%%%%%%%%%
\section{Generalized Maass relation for Siegel-Eisenstein series}\label{sec:Fourier-Jacobi}
%%%%%%%%%%%%%%%%%%%%%%%%%%%%%%%%%%%%%%%%%%%%%%%%%%%%

The purpose of this section is to prove Theorem~\ref{thm:eins}.
Let $e_{k,m}^{(n)}$ be the $m$-th Fourier-Jacobi coefficient
of Siegel-Eisenstein series $E_k^{(n+1)}$,
which is denoted in Section~\ref{ss:results}.

In this section
we write
$\displaystyle{\sum_{d | m}}$ for
$\displaystyle{\sum_{\begin{smallmatrix} d > 0 \\ d | m \end{smallmatrix}}}$,
and
$\displaystyle{\sum_{d^2 | m}}$ for
$\displaystyle{\sum_{\begin{smallmatrix} d > 0 \\ d^2 | m \end{smallmatrix}}}$,
for simplicity.

\subsection{Fourier-Jacobi coefficients}
We define an arithmetic function
\begin{eqnarray*}
 g_k(m)
 &:=&
 \sum_{d^2 | m}
   \mu(d) \, \sigma_{k-1}\left( \frac{m}{d^2} \right),
\end{eqnarray*}
where $\mu(d)$ is the M\"obius function and we put
$\sigma_{k-1}(m) := \displaystyle{\sum_{d|m} d^{k-1}}$
as usual.

\begin{lemma}\label{lem:gkm}
We obtain
\begin{eqnarray*}
  g_k(mp)
 &=&
  \begin{cases}
    \left( p^{k-1} + 1 \right) g_k(m) & \mbox{ if } p {\not |} m, \\
    p^{k-1} g_k(m)                    & \mbox{ if } p | m.
  \end{cases}
\end{eqnarray*}
\end{lemma}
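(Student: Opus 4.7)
The plan is to show that $g_k$ is a multiplicative function and then compute it explicitly at prime powers; the claimed recursion will then follow by comparing $g_k(p^{e+1})$ to $g_k(p^e)$ where $e=v_p(m)$.

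First I would observe that the arithmetic function
\[
   h(d) := \begin{cases} \mu(\sqrt{d}) & \text{if } d \text{ is a square}, \\ 0 & \text{otherwise} \end{cases}
\]
is multiplicative (since $\mu$ is, and a product of coprime integers is a square iff each factor is). Then $g_k = h * \sigma_{k-1}$ as Dirichlet convolution, so $g_k$ is multiplicative as a convolution of two multiplicative functions. Hence it suffices to evaluate $g_k(p^e)$ at prime powers and verify the stated ratios $g_k(p^{e+1})/g_k(p^e)$.

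Next I would compute $g_k(p^e)$. Since $h(p^f)=0$ for $f\geq 2$, $h(1)=1$, and $h(p)=-1$, we get
\[
   g_k(p^e) \;=\; \sigma_{k-1}(p^e) \;-\; \sigma_{k-1}(p^{e-2}) \qquad (e\geq 2),
\]
and $g_k(1) = 1$, $g_k(p) = \sigma_{k-1}(p) = 1 + p^{k-1}$. Using the closed form $\sigma_{k-1}(p^e) = 1 + p^{k-1} + \cdots + p^{e(k-1)}$, the telescoping gives, for every $e \geq 1$,
\[
   g_k(p^e) \;=\; p^{(e-1)(k-1)}\bigl(1 + p^{k-1}\bigr),
\]
and $g_k(1)=1$.

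Finally, writing $e := v_p(m)$ and using multiplicativity, we have
\[
   \frac{g_k(mp)}{g_k(m)} \;=\; \frac{g_k(p^{e+1})}{g_k(p^e)}.
\]
If $p \nmid m$ then $e=0$ and the ratio equals $g_k(p)/g_k(1) = 1 + p^{k-1}$; if $p \mid m$ then $e \geq 1$ and the ratio equals $p^{e(k-1)}/p^{(e-1)(k-1)} = p^{k-1}$. This yields the two cases of the lemma. There is no real obstacle here; the only step requiring any care is identifying $g_k$ as a Dirichlet convolution to justify multiplicativity cleanly, after which the computation at prime powers is routine.
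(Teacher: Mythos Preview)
Your approach is essentially the same as the paper's: establish that $g_k$ is multiplicative and then evaluate it at prime powers. The paper records the closed form $g_k(m)=m^{k-1}\prod_{q\mid m}(1+q^{-(k-1)})$ and reads the lemma off from that; your Dirichlet-convolution argument arrives at the same prime-power values.

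There is one slip to fix. Your stated values of $h$ at prime powers are wrong: since $h(d)=\mu(\sqrt d)$ on squares and $0$ otherwise, one has $h(p)=0$ (not $-1$), $h(p^2)=\mu(p)=-1$, and $h(p^f)=0$ for $f\ge 3$ (not ``$h(p^f)=0$ for $f\ge 2$''). The formula you then write, $g_k(p^e)=\sigma_{k-1}(p^e)-\sigma_{k-1}(p^{e-2})$ for $e\ge 2$, is indeed correct, but it follows from these corrected values, not from the ones you listed. With that adjustment the remainder of your computation and the final case split are fine.
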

\begin{proof}
The function $g_{k}(m)$ is a multiplicative function, namely
$g_{k}(ml) = g_{k}(m) g_{k}(l)$ if $\mbox{gcd}(m,l)=1$.
Hence we obtain the identity
 $g_k(m)
   =
  m^{k-1}
  \displaystyle{\prod_{\begin{smallmatrix} q\, :\, prime \\ q|m \end{smallmatrix}}}
  \left( 1 + \frac{1}{q^{k-1}}\right)$.
This lemma follows from this identity.
\end{proof}

The following proposition is a special case of a result in~\cite[Satz 7]{Bo}.
\begin{prop}[Boecherer~\cite{Bo}]\label{prop:Bo_Satz}
We have
\begin{eqnarray*}
 e_{k,m}^{(n)}
 &=&
 \sum_{d^2 | m }
   g_k\!\left(\frac{m}{d^2}\right)\, E_{k,\frac{m}{d^2}}^{(n)}|U(d).
\end{eqnarray*}
\end{prop}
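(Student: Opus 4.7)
The plan is to deduce the identity from Boecherer's Satz 7 in~\cite{Bo}, which gives a formula for the Fourier-Jacobi coefficients of the Siegel-Eisenstein series at general matrix index in terms of Jacobi-Eisenstein series and index-shift operators; specializing the index matrix to the $1 \times 1$ matrix $(m)$ yields the proposition. Since the specialization step is essentially bookkeeping, let me outline the underlying direct argument.

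First, I would expand
\[
 E_k^{(n+1)}(Z) = \sum_{M = \bigl(\begin{smallmatrix} A & B \\ C & D \end{smallmatrix}\bigr) \in \Gamma_{n+1,0}\backslash \Gamma_{n+1}} \det(CZ+D)^{-k}
\]
and choose coset representatives compatible with the Jacobi parabolic associated to the cusp $\omega \to i\infty$ in the last variable of $Z = \bigl(\begin{smallmatrix}\tau & z \\ {}^t z & \omega\end{smallmatrix}\bigr)$. Extracting the $m$-th Fourier-Jacobi coefficient by integrating against $e^{-2\pi i m\omega}$ leaves a sum supported on those representatives whose contribution transforms by the character $e(m\omega)$.

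Next, I would partition the surviving cosets according to the ``content'' $d$ of the auxiliary column vector in each representative that couples the $\omega$-variable to the first $n$ variables; for a nonzero contribution at Fourier-Jacobi index $m$, one is forced to have $d^2 \mid m$. For each fixed $d$, the subsum recombines, after rescaling $z \mapsto d z$, into a sum over $\Gamma_{n,0}^J \backslash \Gamma_n^J$ defining the Jacobi-Eisenstein series $E_{k, m/d^2}^{(n)}$ of index $m/d^2$, with the $U(d)$-twist arising precisely from the rescaling of $z$. Thus each content-$d$ piece equals $c_d \cdot E_{k,m/d^2}^{(n)} | U(d)$ for some constant $c_d$ depending only on $k$, $d$, and $m/d^2$.

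Finally, the multiplicity $c_d$ must be identified with $g_k(m/d^2)$. This would follow from a local computation at each prime: using the multiplicativity of $g_k$ from Lemma~\ref{lem:gkm} one reduces to prime powers, where $g_k(p^a)$ is the M\"obius-twisted divisor sum featuring $\sigma_{k-1}$. The orbit counts of $\Gamma_n$ acting on the content-$d$ data modulo $p^{v_p(m)}$ then have to reproduce exactly this divisor sum. The main obstacle is this last combinatorial/arithmetic bookkeeping — parametrizing the cosets of $\Gamma_{n+1,0} \backslash \Gamma_{n+1}$ compatibly with the Jacobi parabolic, extracting the correct scaling factor, and verifying the local multiplicity — which is precisely the technical content of \cite[Satz~7]{Bo}. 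For our purposes, the cleanest route is therefore to quote that result and specialize.
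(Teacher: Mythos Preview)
Your proposal is correct and essentially matches the paper's approach: the paper's own proof consists of a single sentence referring the reader to \cite[Theorem~5.5]{Ya}, so both proofs defer to the existing literature rather than carrying out the coset computation in full. The only difference is cosmetic---you cite Boecherer's original \cite[Satz~7]{Bo} (to whom the proposition is attributed) and supply a helpful sketch of the underlying argument, whereas the paper cites Yamazaki's reformulation; either reference suffices.
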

\begin{proof}
For the proof of this proposition,
the reader is referred to~\cite[Theorem 5.5]{Ya}.
\end{proof}

\begin{prop}\label{prop:sum_EU}
For any $n > 0$ and for any $m>0$
we have the identity
\begin{eqnarray*}
 &&
\sum_{ d^2 | m }
  g_k\! \left(\frac{m}{d^2}\right) \,
  \left(E_{k,\frac{m}{p^2 d^2}}^{(n)}|U(p^2 d),
          E_{k,\frac{m}{d^2}}^{(n)}|U(p d),
          E_{k,\frac{m p^2}{d^2}}^{(n)}|U(d)
  \right)\!\!
  \begin{pmatrix}
   a_{0,\frac{m}{d^2},p,k} \\
   a_{1,\frac{m}{d^2},p,k} \\
   a_{2,\frac{m}{d^2},p,k}
  \end{pmatrix}
  \\
  &=&
\left(
  e_{k,\frac{m}{p^2}}^{(n)}|U(p^2), 
  e_{k,m}^{(n)}|U(p),
  e_{k,mp^2}^{(n)}
\right)\!\!
\begin{pmatrix}
  1 \\
  p^{-k}(-1+p\, \delta_{p|m})  \\
  p^{-2k+2}
\end{pmatrix},
\end{eqnarray*}
where $\delta_{p|m}$ is $1$ or $0$, according as $p|m$ or $p {\not |} m$.
\end{prop}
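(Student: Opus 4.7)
The plan is to apply Boecherer's formula (Proposition~\ref{prop:Bo_Satz}) to both sides of the identity so that everything becomes an explicit linear combination of Jacobi--Eisenstein series of a common Jacobi index $mp^2$, and then match coefficients using the linear independence from Lemma~\ref{lemma:linear_independent}.

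First I would substitute $e_{k,\mu}^{(n)}=\sum_{d^2\mid \mu}g_k(\mu/d^2)\,E_{k,\mu/d^2}^{(n)}|U(d)$ into each of the three pieces $e_{k,m/p^2}^{(n)}|U(p^2)$, $e_{k,m}^{(n)}|U(p)$ and $e_{k,mp^2}^{(n)}$ appearing on the right-hand side. After this expansion, both sides are finite linear combinations of forms $E_{k,\mu}^{(n)}|U(\nu)$ whose Jacobi index $\mu\nu^2$ is uniformly equal to $mp^2$. Reparametrizing each contribution in the normalized shape $E_{k,mp^2/e^2}^{(n)}|U(e)$ with $e^2\mid mp^2$, Lemma~\ref{lemma:linear_independent} guarantees that these forms are linearly independent in $J_{k,mp^2}^{(n)}$, so the proposition reduces to a coefficient-by-coefficient identity indexed by the positive divisors $e$ with $e^2\mid mp^2$.

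Next I would carry out the comparison by a case analysis on $v_p(e)\in\{0,1,\ge 2\}$, since different terms contribute in each case. If $v_p(e)=0$, only the $a_2$-piece on the left and the $p^{-2k+2}e_{k,mp^2}^{(n)}$-piece on the right contribute. If $v_p(e)=1$, writing $e=pf$ with $\gcd(f,p)=1$, the $a_1$-piece on the left always contributes, the $a_2$-piece contributes exactly when $p^2\mid m$, and on the right both $e_{k,m}^{(n)}|U(p)$ and $e_{k,mp^2}^{(n)}$ contribute. If $v_p(e)\ge 2$, all three terms on each side may contribute. Within each of these cases one further splits according to $v_p(m)$, which selects the correct branch in the piecewise definitions of $a_{0,\cdot,p,k}$, $a_{1,\cdot,p,k}$, $a_{2,\cdot,p,k}$ and pins down the value of $\delta_{p\mid m}$.

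In each subcase, Lemma~\ref{lem:gkm} reduces the identity to an elementary polynomial identity in $p^{k-1}$: writing the relevant index in the form $p^\alpha M$ with $\gcd(M,p)=1$, one has $g_k(p^\alpha M)=p^{(\alpha-1)(k-1)}(p^{k-1}+1)g_k(M)$ for $\alpha\ge 1$, so $g_k(M)$ factors out of every term and only a rational identity in $p$ and $p^{-k}$ remains. The main obstacle is the bookkeeping: roughly ten subcases arise, and in each one must select the correct branches in the piecewise formulas for $a_0,a_1,a_2$ and apply Lemma~\ref{lem:gkm} the appropriate number of times. No new conceptual ingredient is needed beyond Proposition~\ref{prop:Bo_Satz}, Lemma~\ref{lem:gkm}, and Lemma~\ref{lemma:linear_independent}; the statement is a numerical consequence of the Boecherer expansion, the explicit values of the $a_i$, and the multiplicative behavior of $g_k$.
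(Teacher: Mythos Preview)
Your proposal is correct and follows essentially the same route as the paper: both arguments expand via Proposition~\ref{prop:Bo_Satz}, use Lemma~\ref{lem:gkm} to handle the $g_k$ factors, and carry out a case analysis on the $p$-divisibility. The only organizational difference is that the paper transforms the left-hand side directly into the right-hand side by partitioning the sum over $d$ according to $\mathrm{ord}_p(m/d^2)$ (defining nine auxiliary pieces $Eg_1,\dots,Eg_9$ and regrouping), without ever invoking linear independence; your use of Lemma~\ref{lemma:linear_independent} is harmless but not needed, since once both sides are written in the same explicit form the identity is immediate.
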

\begin{proof}
Due to Proposition~\ref{prop:Bo_Satz} and Lemma~\ref{lem:gkm},
this proposition is obtained by straightforward calculation as follows.
We set nine functions
\begin{eqnarray*}
   Eg_1
  &:=&
   \sum_{\begin{smallmatrix} d^2|m \\ \frac{m}{d^2} \equiv 0\, (p^2)\end{smallmatrix}}
       p^{-2k+2} E_{k,\frac{m}{p^2d^2}}^{(n)}|U(p^2d)\, g\!\left(\frac{m}{d^2}\right),\\
   Eg_2
  &:=&
   \sum_{\begin{smallmatrix} d^2|m \\ \frac{m}{d^2} \equiv 0\, (p^2)\end{smallmatrix}}
       p^{-k}(p-1) E_{k,\frac{m}{d^2}}^{(n)}|U(pd)\, g\!\left(\frac{m}{d^2}\right),\\
   Eg_3
  &:=&
   \sum_{\begin{smallmatrix} d^2|m \\ \frac{m}{d^2} \equiv 0\, (p^2)\end{smallmatrix}}
       E_{k,\frac{p^2m}{d^2}}^{(n)}|U(d)\, g\!\left(\frac{m}{d^2}\right),
\end{eqnarray*}
\begin{eqnarray*}
   Eg_4
  &:=&
   \sum_{\begin{smallmatrix} d^2|m \\ \frac{m}{d^2} \equiv 0\, (p) \\ \frac{m}{d^2} \not \equiv 0\, (p^2)\end{smallmatrix}}
       p^{-2k+2} E_{k,\frac{m}{d^2}}^{(n)}|U(pd) \, g\!\left(\frac{m}{d^2}\right),\\
   Eg_5
  &:=&
   \sum_{\begin{smallmatrix} d^2|m \\ \frac{m}{d^2} \equiv 0\, (p) \\ \frac{m}{d^2} \not \equiv 0\, (p^2)\end{smallmatrix}}
       (p^{-k+1}-p^{-k}) E_{k,\frac{m}{d^2}}^{(n)}|U(pd)\, g\!\left(\frac{m}{d^2}\right),\\
   Eg_6
  &:=&
   \sum_{\begin{smallmatrix} d^2|m \\ \frac{m}{d^2} \equiv 0\, (p) \\ \frac{m}{d^2} \not \equiv 0\, (p^2)\end{smallmatrix}}
       E_{k,\frac{p^2m}{d^2}}^{(n)}|U(d)\, g\!\left(\frac{m}{d^2}\right),
\end{eqnarray*}
\begin{eqnarray*}
   Eg_7
  &:=&
   \sum_{\begin{smallmatrix} d^2|m \\ \frac{m}{d^2} \not \equiv 0\, (p) \end{smallmatrix}}
       p^{-2k+2} E_{k,\frac{m}{d^2}}^{(n)}|U(pd) \, g\!\left(\frac{m}{d^2}\right),\\
   Eg_8
  &:=&
   \sum_{\begin{smallmatrix} d^2|m \\ \frac{m}{d^2} \not \equiv 0\, (p) \end{smallmatrix}}
       (-p^{-k}) E_{k,\frac{m}{d^2}}^{(n)}|U(pd) \, g\!\left(\frac{m}{d^2}\right),
\end{eqnarray*}
and
\begin{eqnarray*}
   Eg_9
  &:=&
   \sum_{\begin{smallmatrix} d^2|m \\ \frac{m}{d^2} \not \equiv 0\, (p) \end{smallmatrix}}
       (p^{-k+1}+1) E_{k,\frac{p^2m}{d^2}}^{(n)}|U(d) \, g\!\left(\frac{m}{d^2}\right).
\end{eqnarray*}
If $\mbox{ord}_p m \equiv 1 \, (2)$, then
\begin{eqnarray*}
 &&
  \sum_{d^2 | m }
    g_k\! \left(\frac{m}{d^2}\right) \,
    \left(E_{k,\frac{m}{p^2 d^2}}^{(n)}|U(p^2 d),
            E_{k,\frac{m}{d^2}}^{(n)}|U(p d),
            E_{k,\frac{m p^2}{d^2}}^{(n)}|U(d)
    \right)\!\!
    \begin{pmatrix}
     a_{0,\frac{m}{d^2},p,k} \\
     a_{1,\frac{m}{d^2},p,k} \\
     a_{2,\frac{m}{d^2},p,k}
    \end{pmatrix}\\
    &=&
    Eg_1 + Eg_2 + Eg_3 + Eg_4 + Eg_5 + Eg_6,
\end{eqnarray*}
and
\begin{eqnarray*}
  Eg_1 &=& e_{k,\frac{m}{p^2}}^{(n)}|U(p^2), \\
  Eg_2 + Eg_5 &=& p^{-k}(p-1) e_{k,m}^{(n)}|U(p),\\
  Eg_3 + Eg_4 + Eg_6 &=& p^{-2k+2} e_{k,mp^2}^{(n)}.
\end{eqnarray*}
Because of the assumption $\mbox{ord}_p m \equiv 1\, (2)$, we have $\delta_{p|m} = 1$.

Hence this proposition follows for the case $\mbox{ord}_p m \equiv 1 \, (2)$.

If $\mbox{ord}_p m \equiv 0 \, (2)$, then
\begin{eqnarray*}
 &&
  \sum_{d^2 | m }
    g_k\! \left(\frac{m}{d^2}\right) \,
    \left(E_{k,\frac{m}{p^2 d^2}}^{(n)}|U(p^2 d),
            E_{k,\frac{m}{d^2}}^{(n)}|U(p d),
            E_{k,\frac{m p^2}{d^2}}^{(n)}|U(d)
    \right)\!\!
    \begin{pmatrix}
     a_{0,\frac{m}{d^2},p,k} \\
     a_{1,\frac{m}{d^2},p,k} \\
     a_{2,\frac{m}{d^2},p,k}
    \end{pmatrix}\\
    &=&
    Eg_1 + Eg_2 + Eg_3 + Eg_7 + Eg_8 + Eg_9,
\end{eqnarray*}
and
\begin{eqnarray*}
   Eg_1
  &=&
   \delta_{p^2|m}\left\{ e_{k,\frac{m}{p^2}}^{(n)}|U(p^2)
   + p^{-k+1} \sum_{\begin{smallmatrix} d^2|m \\ \frac{m}{d^2} \not \equiv 0\, (p^2) \end{smallmatrix}}
     E_{k,\frac{m}{d^2}}^{(n)}|U(pd)\, g\!\left(\frac{m}{d^2}\right)\right\},\\
   Eg_2+Eg_8
  &=&
   \delta_{p^2|m} \left\{p^{-k+1} e_{k,m}^{(n)}|U(p)
    - p^{-k+1} \sum_{\begin{smallmatrix} d^2|m \\ \frac{m}{d^2} \not \equiv 0\, (p^2) \end{smallmatrix}}
     E_{k,\frac{m}{d^2}}^{(n)}|U(pd)\, g\!\left(\frac{m}{d^2}\right) \right\} \\
  &&
    -p^{-k} e_{k,m}^{(n)}|U(p),\\
   Eg_3 + Eg_7 + Eg_9
  &=&
   p^{-2k+2} e_{k,mp^2}^{(n)}.
\end{eqnarray*}
Here $\delta_{p^2|m}$ is defined by $1$ or $0$, according as $p^2|m$ or $p^2 {\not |} m$.
Because of the assumption $\mbox{ord}_p m \equiv 0\, (2)$, we have $\delta_{p^2|m} = \delta_{p|m}$.

Therefore this proposition follows also for the case $\mbox{ord}_p m \equiv 0 \, (2)$.
\end{proof}

\subsection{Proof of Theorem~\ref{thm:eins}}
Now we shall prove Theorem~\ref{thm:eins}.
%\noindent
%\textit{Proof of Theorem~\ref{thm:eins}.}
For any prime $p$ and for any positive-integer $d$, the operators $V_{l,n-l}(p^2)$ and $U(d)$ are compatible.
Hence from Proposition~\ref{prop:Bo_Satz}, Proposition~\ref{prop:EV} and Proposition~\ref{prop:sum_EU},
we have
\begin{eqnarray*}
  &&
    e_{k,m}^{(n)}|\left(V_{0,n}(p^2), ..., V_{n,0}(p^2)\right) \\
  &=&
    \sum_{ d^2 | m }
       g_k\!\left(\frac{m}{d^2}\right)\,
          \left(E_{k,\frac{m}{d^2}}^{(n)}|\left(V_{0,n}(p^2), ..., V_{n,0}(p^2)\right)|U(d)\right)\\
  &=&
    \sum_{d^2 | m }
       g_k\!\left(\frac{m}{d^2}\right)\,
      \left(E_{k,\frac{m}{p^2 d^2}}^{(n)}|U(p^2 d),
          E_{k, \frac{m}{d^2}}^{(n)}|U(p d),
          E_{k, \frac{m p^2}{d^2}}^{(n)}|U(d)
      \right)
      \begin{pmatrix}
       0         & a_{0,\frac{m}{d^2},p,k} \\
       p^{-k} & a_{1,\frac{m}{d^2},p,k} \\
       0         & a_{2,\frac{m}{d^2},p,k}
      \end{pmatrix}
      A_{2,n+1}^{p,k}\\
  &=&
     \left(
      e_{k,\frac{m}{p^2}}^{(n)}|U(p^2), 
      e_{k,m}^{(n)}|U(p),
      e_{k,mp^2}^{(n)}
    \right)\!\!
    \begin{pmatrix}
      0      & 1 \\
      p^{-k} & p^{-k}(-1+p\, \delta_{p | m})  \\
      0      & p^{-2k+2}
    \end{pmatrix}
    A_{2,n+1}^{p,k}.
\end{eqnarray*}
Thus we obtain Theorem~\ref{thm:eins}.

%%%%%%%%%%%%%%%%%%%%%%%%%%%%%%%%%%%%%%%%%%%%%%%%%%%%
\section{Generalized Maass relation for the Miyawaki-Ikeda lifts}\label{sec:Miyawaki-Ikeda}
%%%%%%%%%%%%%%%%%%%%%%%%%%%%%%%%%%%%%%%%%%%%%%%%%%%%

In this section we shall show Theorem~\ref{thm:phi_maass}, Theorem~\ref{thm:drei}
and Corollary~\ref{coro:vier}.
%Now we consider the case of Siegel modular forms of weight $k+n$ of degree $2n$.
Let $\phi_m \in J_{k+n,m}^{(2n-1)}$ be the $m$-th Fourier-Jacobi coefficient
of the Duke-Imamoglu-Ibukiyama-Ikeda lift $F$ stated in Theorem~\ref{thm:phi_maass}.

In this section the letters $p$ and $q$ are reserved for prime numbers.
For example, the symbol $\prod_{p|N}$ denotes the product over primes $p$ such that $p|N$.

%%%%%%
\subsection{Fourier coefficients of $\phi_m$}\label{ss:fourier_phi}
%%%%%%

We take the Fourier expansion of $\phi_m$:
\begin{eqnarray*}
 \phi_m(\tau,z)
 &=&
   \sum_{N,R} C_m(N,R) e(N\tau) e({^t R} z),
\end{eqnarray*}
where in the summation $N \in \mbox{Sym}_{2n-1}^*$ and $R \in \Z^{2n-1}$
run over all elements which satisfy $4 Nm - R {^t R} > 0$.
We set
$M = \begin{pmatrix} N& \frac12 R \\ \frac12 {^t R}& m \end{pmatrix}$.
We denote by $D_M$ and by $f_M$ the integers such that
$(-1)^n \det(2 M) = D_M f_M^2$, where $D_M$ is a fundamental discriminant
and $f_M$ is a positive integer.
Then the $(N,R)$-th Fourier coefficient $C_m(N,R)$ of $\phi_m$ is
\begin{eqnarray*}
    C_m(N,R)
  &=&
    C(|D_M|) f_{M}^{k-\frac12} \prod_{p|f_M} \widetilde{F}_p(M;\alpha_p),
\end{eqnarray*}
where $C(|D_M|)$ is the $|D_M|$-th Fourier coefficient of $h$ which corresponds
to $g$ by Shimura correspondence, and
$\widetilde{F}_p(M;X_p) \in \C[X_p + X_p^{-1}]$ is a certain Laurent polynomial
introduced in~\cite[\S 1]{Ik}.

%%%%%%
\subsection{Matrix $A_{2,2n}^{p,k+n}$}%\label{ss:A'}
%%%%%%

Let $A_{2,n+1}^{p,k}$ and $A'_{2,2n}(X_p)$ be the matrices introduced in Section~\ref{ss:Satake_Siegel_Phi}.
 
\begin{lemma}\label{lem:Adash}
For any even integer $k$ we obtain
\begin{eqnarray*}
    A_{2,2n}^{p,k+n}
  &=&
    p^{-(n-1)(2k-1)} A'_{2,2n}(p^{-(k-\frac12)}).
\end{eqnarray*}
\end{lemma}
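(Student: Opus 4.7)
The plan is to prove this by direct unfolding of the definitions of $A_{2,2n}^{p,k+n}$ and $A'_{2,2n}(X_p)$ and comparison of the two specializations of $B_{2,2n}$ that appear on either side. By definition of $A_{2,n+1}^{p,k}$ in Section~\ref{ss:Satake_Siegel_Phi}, the left-hand side equals
\[
 A_{2,2n}^{p,k+n} \;=\; B_{2,2n}\!\left( p^{2-(k+n)}, p^{3-(k+n)}, \ldots, p^{(2n-1)-(k+n)} \right).
\]
On the other hand, setting $X_p = p^{-(k-\frac12)}$ in
$A'_{2,2n}(X_p) = B'_{2,2n}(p^{\frac32-n} X_p, p^{\frac52-n} X_p, \ldots, p^{-\frac32+n} X_p)$
replaces the $i$-th entry $p^{(i-\frac12)-n} X_p$ by $p^{(i-\frac12)-n-(k-\frac12)} = p^{i-(k+n)}$. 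So after unfolding, both sides involve evaluation of $B_{2,2n}$ (resp.\ $B'_{2,2n}$) at the common tuple $X_i = p^{i-(k+n)}$ for $i = 2, \ldots, 2n-1$.

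Next, I would invoke the relation
\[
 B'_{2,2n}(X_2, \ldots, X_{2n-1}) \;=\; \left(\prod_{i=2}^{2n-1} X_i\right)^{-1} B_{2,2n}(X_2, \ldots, X_{2n-1})
\]
from the definition of $B'_{2,2n}$ in Section~\ref{ss:Satake_Siegel_Phi}. Plugging in $X_i = p^{i-(k+n)}$, the claim reduces to the scalar identity
\[
 \prod_{i=2}^{2n-1} p^{\,i-(k+n)} \;=\; p^{-(n-1)(2k-1)}.
\]

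The only remaining step is the elementary computation of the exponent. I would evaluate
\[
 \sum_{i=2}^{2n-1} \bigl( i - (k+n) \bigr)
 \;=\; \Bigl( \tfrac{(2n-1)2n}{2} - 1 \Bigr) - (2n-2)(k+n)
 \;=\; n(2n-1) - 1 - (2n-2)(k+n),
\]
expand $(2n-2)(k+n) = 2nk + 2n^2 - 2k - 2n$, and simplify to obtain $-2nk + 2k + n - 1 = -(2k-1)(n-1)$, which gives the desired equality. Once this arithmetic check is done, combining the three steps yields the lemma; no step poses a real obstacle beyond care with the shift conventions in the defining formulas for $A_{2,n+1}^{p,k}$ versus $A'_{2,2n}(X_p)$.
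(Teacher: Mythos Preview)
Your proof is correct and follows essentially the same approach as the paper's own proof: both unfold the definitions of $A_{2,2n}^{p,k+n}$ and $A'_{2,2n}(X_p)$, observe that they involve the same specialization $X_i = p^{i-(k+n)}$ of $B_{2,2n}$ (resp.\ $B'_{2,2n}$), and then verify the scalar factor $\prod_{i=2}^{2n-1} p^{i-(k+n)} = p^{-(n-1)(2k-1)}$ directly.
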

\begin{proof}
From the definition of $A_{2,n+1}^{p,k}$ we get
\begin{eqnarray*}
  A_{2,2n}^{p,k+n}
 &=&
  B_{2,2n}(p^{2-n-k},p^{3-n-k},...,p^{n-1-k})\\
 &=&
  \left(\prod_{i=2}^{2n-1}p^{i-n-k}\right)
   B'_{2,2n}(p^{\frac32-n-(k-\frac12)},p^{\frac52-n-(k-\frac12)},...,p^{-\frac32+n-(k-\frac12)})\\
% &=&
%  p^{-(n-1)(2k-1)}\, B'_{2,2n}(p^{\frac32-n-(k-\frac12)},p^{\frac52-n-(k-\frac12)},...,p^{-\frac32+n-(k-\frac12)})\\
 &=&
  p^{-(n-1)(2k-1)}\, A'_{2,2n}(p^{-(k-\frac12)}).
\end{eqnarray*}
\end{proof}

\subsection{Proof of Theorem~\ref{thm:phi_maass}}\label{ss:pr_of_thm2}

Let $g \in \mbox{GSp}_{2n-1}^+(\R) \cap M_{4n-2}(\Z)$ be a matrix
such that the similitude of $g$ is $\nu(g) = p^2$.
We write the coset decomposition
$\Gamma_{2n-1} g \Gamma_{2n-1} = \displaystyle{\bigcup_i \Gamma_n g_i}$
with $g_i = \begin{pmatrix} A_i & B_i \\ 0_{2n-1} & D_i \end{pmatrix}$.
We take the Fourier expansion of $\phi_m|V(\Gamma_{2n-1}g\Gamma_{2n-1})$:
\begin{eqnarray*}
   \left(\phi_m|V(\Gamma_{2n-1}g\Gamma_{2n-1})\right)(\tau,z)
 &=&
   \sum_{N,R} C_m(g;N,R)\, e(N\tau)\, e({^t R} z),
\end{eqnarray*}
where in the summation $N \in \mbox{Sym}_{2n-1}^*$ and $R \in \Z^{2n-1}$
run over all elements which satisfy $4Nmp^2 - R {^t R} > 0$.

We now fix $N \in \mbox{Sym}_{2n-1}^*$ and $R \in \Z^{2n-1}$ such that $4 Nmp^2 - R {^t R} > 0$.
And we set $M_1 = \begin{pmatrix} N & \frac{1}{2p} R \\ \frac{1}{2p} {^t R} & m \end{pmatrix}$.
\begin{lemma}\label{lem:V_Fourier}
The $(N,R)$-th Fourier coefficient $C_m(g;N,R)$ of $\phi_m|V(\Gamma_{2n-1}g\Gamma_{2n-1})$ is
\begin{eqnarray*}
   C_m(g;N,R)
  &=&
    p^{-(2n-1)(k-\frac12)} C\left(|D_{M_1}| \right) f_{M_1}^{k-\frac12}
    \sum_i \det D_i^{-n-\frac12}\\
  && \times\!\!
    \prod_{q|f_{M_1[diag(p^{-1} {^t D_i},1)]}}\!\!
     \widetilde{F}_q\left(M_1[\mbox{diag}(p^{-1} {^t D_i} ,1)];\alpha_q \right).
\end{eqnarray*}
Here we regard $\widetilde{F}_q\left(M_1[\mbox{diag}(p^{-1} {^t D_i} ,1)];X_q \right)$ as $0$,
if $M_1[\mbox{diag}(p^{-1} {^t D_i} ,1)] \not \in \mbox{Sym}_{2n}^*$.
\end{lemma}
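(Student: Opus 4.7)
The plan is to expand $\phi_m|V(\Gamma_{2n-1}g\Gamma_{2n-1})$ as a Fourier series in $(\tau,z)$ and to identify each coefficient via the explicit Duke-Imamoglu-Ibukiyama-Ikeda Fourier coefficient formula of Section~\ref{ss:fourier_phi}. For the upper-triangular representatives $g_i = \begin{pmatrix} A_i & B_i \\ 0 & D_i\end{pmatrix}$ of similitude $p^2$, the similitude relation forces $A_i\,{^t D_i} = p^2\cdot 1_{2n-1}$, so $C_i = 0$ collapses the Jacobi slash of Section~\ref{ss:Jacobi_group} to
\[
  (\phi_m|_{k+n,m}[g_i,(0,0),0])(\tau,z)
  = \det(D_i)^{-(k+n)}\,
    \phi_m\!\left(A_i\tau D_i^{-1} + B_i D_i^{-1},\, p^2\,{^t D_i^{-1}} z\right).
\]
Substituting the Fourier expansion of $\phi_m$ and extracting the coefficient of $e(N\tau)e({^tR}z)$, the unique contributing Fourier index is $N' = p^{-2} D_i N\,{^t D_i}$ and $R' = p^{-2} D_i R$, accompanied by a phase $e(\mbox{tr}(N' B_i D_i^{-1}))$.

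Next I would prove the key block identity
\[
  \begin{pmatrix} N' & R'/2 \\ {^t R'}/2 & m\end{pmatrix}
  \;=\; M_1\!\left[\mbox{diag}(p^{-1}\,{^t D_i},\,1)\right],
\]
by a direct $2\times 2$ block computation using $M_1 = \begin{pmatrix} N & R/(2p) \\ {^t R}/(2p) & m\end{pmatrix}$. This identification is the crux: it converts the Fourier index appearing in $C_m(N',R')$ into the substituted matrix $M'_i := M_1[\mbox{diag}(p^{-1}{^t D_i},1)]$, to which the formula of Section~\ref{ss:fourier_phi} applies, yielding $C_m(N',R') = C(|D_{M'_i}|)\,f_{M'_i}^{k-\frac12}\prod_{q|f_{M'_i}}\widetilde{F}_q(M'_i;\alpha_q)$. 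From $\det(2 M'_i) = p^{-2(2n-1)}\det(D_i)^2\,\det(2 M_1)$ one reads off $D_{M'_i} = D_{M_1}$ and $f_{M'_i} = p^{-(2n-1)}\det(D_i)\,f_{M_1}$, whenever the right-hand side is a positive integer.

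Assembling powers of $\det(D_i)$, the slash operator contributes $\det(D_i)^{-(k+n)}$ while $f_{M'_i}^{k-\frac12}$ contributes $p^{-(2n-1)(k-\frac12)}\det(D_i)^{k-\frac12}\,f_{M_1}^{k-\frac12}$, producing a net exponent $\det(D_i)^{-n-\frac12}$ together with the global prefactor $p^{-(2n-1)(k-\frac12)}\,C(|D_{M_1}|)\,f_{M_1}^{k-\frac12}$. Pulling this prefactor out of the summation over $i$ then yields exactly the formula of the lemma.

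The main obstacle I anticipate is the joint treatment of the phase factor $e(\mbox{tr}(N' B_i D_i^{-1}))$ and the vanishing convention for $\widetilde{F}_q$ when $M'_i \notin \mbox{Sym}_{2n}^*$. One has to verify that the coset representatives may be chosen so that, for each fixed $D_i$, either the phase is trivial and $M'_i$ is half-integral (so the DII formula applies directly), or summation over the $B_i$ for that $D_i$ kills the contribution and $M'_i\notin\mbox{Sym}_{2n}^*$ (consistent with the stated vanishing convention). Discharging this bookkeeping carefully is what makes the clean form of the lemma possible.
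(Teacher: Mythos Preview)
Your approach is essentially the same as the paper's: compute the Fourier coefficient of $\phi_m|V$ term by term via the upper-triangular representatives, identify the resulting Fourier index with $M_1[\mbox{diag}(p^{-1}{^t D_i},1)]$, and then use $D_{M'_i}=D_{M_1}$ together with $f_{M'_i}=p^{-(2n-1)}\det(D_i)\,f_{M_1}$ to factor out the global constant. The paper carries this out in two sentences, simply asserting the first displayed formula and the $f$-relation; your block identity and exponent bookkeeping make explicit exactly the computation the paper suppresses, and you are right that the exponential factor $e(\mbox{tr}(N'_iB_iD_i^{-1}))$ and its interaction with the half-integrality of $M'_i$ is the point that requires care (the paper does not comment on it at all).
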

\begin{proof}
From the definition of $V(\Gamma_{2n-1} g \Gamma_{2n-1})$
the $(N,R)$-th Fourier coefficient of the form $\phi_m|V(\Gamma_{2n-1}g\Gamma_{2n-1})$ is
\begin{eqnarray*}
   \sum_i \det D_i^{-(k+n)} C\left(|D_{M_{1,i}}| \right)
   f_{M_{1,i}}^{k-\frac12}
   \prod_{q|f_{M_{1,i}}}\!\!
   \widetilde{F}_q\left(M_{1,i};\alpha_q \right),
\end{eqnarray*}
where $M_{1,i} := M_1[diag(p^{-1} {^t D_i},1)]$.
Thus this lemma follows from the fact that if $M_{1,i}$
is a half-integral symmetric matrix, then
$D_{M_{1,i}} = D_{M_1}$ and
$f_{M_{1,i}} = p^{-(2n-1)} (\det D_i) f_{M_1}$.
\end{proof}
Now we shall prove Theorem~\ref{thm:phi_maass}.
In the same manner as in Lemma~\ref{lem:V_Fourier} we obtain the fact that the $(N,R)$-th Fourier coefficient of
$e_{k+n,m}^{(2n-1)}|V(\Gamma_{2n-1}g\Gamma_{2n-1})$ is
\begin{eqnarray*}
  &&
    p^{-(2n-1)(k-\frac12)} h_{k+\frac12}\left(|D_{M_1}| \right) f_{M_1}^{k-\frac12}
    \sum_i \det D_i^{-n-\frac12}\\
  && \times\!\!
    \prod_{q|f_{M_1[diag(p^{-1} {^t D_i},1)]}}\!\!
     \widetilde{F}_q\left(M_1[\mbox{diag}(p^{-1} {^t D_i} ,1)]; q^{k-\frac12} \right),
\end{eqnarray*}
where $h_{k+\frac12}(|D_{M_1}|)$ is the $|D_{M_1}|$-th Fourier coefficient
of the Cohen type Eisenstein series of weight $k+\frac12$ which corresponds
to the Eisenstein series of weight $2k$ by the Shimura correspondence.

By the virtue of Theorem~\ref{thm:eins}, the form
$e_{k+n,m}^{(2n-1)}|V(\Gamma_{2n-1}g\Gamma_{2n-1})$
is a linear combination of
  $e_{k+n,\frac{m}{p^2}}^{(2n-1)}|U(p^2)$,
  $e_{k+n,m}^{(2n-1)}|U(p)$
  and $e_{k+n,mp^2}^{(2n-1)}$.
Hence there exists constants $u_0$, $u_1$ and $u_2$, such that
\begin{eqnarray*}
    e_{k+n,m}^{(2n-1)}|V(\Gamma_{2n-1}g\Gamma_{2n-1})
  &=&
      u_0\, e_{k+n,\frac{m}{p^2}}^{(2n-1)}|U(p^2)
    + u_1\, e_{k+n,m}^{(2n-1)}|U(p)
    + u_2\, e_{k+n,mp^2}^{(2n-1)}.
\end{eqnarray*}
We remark that the constants $u_0$, $u_1$ and $u_2$ depend
on the choices of $p$, $k$, $m$ and $n$.
The $(N,R)$-th Fourier coefficient of the form of the above RHS is
\begin{eqnarray*}
   &&
      u_0\,    h_{k+\frac12}\left(|D_{M_1}| \right) p^{-k+\frac12} f_{M_1}^{k-\frac12}
      \prod_{q|f_{M_0}}\!\!
       \widetilde{F}_q\left(M_0; q^{k-\frac12} \right) \\
   && +
      u_1\,    h_{k+\frac12}\left(|D_{M_1}| \right) f_{M_1}^{k-\frac12}
      \prod_{q|f_{M_1}}\!\!
       \widetilde{F}_q\left(M_1; q^{k-\frac12} \right) \\
   && +
      u_2\,    h_{k+\frac12}\left(|D_{M_1}| \right) p^{k-\frac12} f_{M_1}^{k-\frac12}
      \prod_{q|f_{M_2}}\!\!
       \widetilde{F}_q\left(M_2; q^{k-\frac12} \right),
\end{eqnarray*}
where
  $M_0 = \begin{pmatrix} N&\frac{1}{2p^2}R \\ \frac{1}{2p^2} {^t R}& \frac{m}{p^2} \end{pmatrix}$
and
  $M_2 = \begin{pmatrix} N&\frac{1}{2}R \\ \frac{1}{2} {^t R}& m p^2 \end{pmatrix}$.
Because $h_{k+\frac12}\left(|D_{M_1}| \right) \neq 0$, we obtain
\begin{eqnarray}
  && \label{eq:V_Fourier}
  \\
  \nonumber
  &&
    p^{-(2n-1)(k-\frac12)} \sum_i \det D_i^{-n-\frac12}
    \prod_{q|f_{M_1[diag(p^{-1} {^t D_i},1)]}}\!\!
     \widetilde{F}_q\left(M_1[\mbox{diag}(p^{-1} {^t D_i} ,1)]; q^{k-\frac12} \right)\\
  \nonumber
  &=&
      u_0\, p^{-k+\frac12}
      \prod_{q|f_{M_0}}\!\!
       \widetilde{F}_q\left(M_0; q^{k-\frac12} \right) 
      +
      u_1\, 
      \prod_{q|f_{M_1}}\!\!
       \widetilde{F}_q\left(M_1; q^{k-\frac12} \right)
    +
      u_2\, p^{k-\frac12}
      \prod_{q|f_{M_2}}\!\!
       \widetilde{F}_q\left(M_2; q^{k-\frac12} \right).
\end{eqnarray}
%Because this identity holds for infinitely many integer $k>0$.
%Thus, we get
%\begin{eqnarray*}
%  &&
%    p^{-(2n-1)(k-\frac12)} \sum_i \det D_i^{-n-\frac12}
%    \prod_{q|f_{M_1[diag(p^{-1} {^t D_i},1)]}}\!\!
%     \widetilde{F}_q\left(M_1[\mbox{diag}(p^{-1} {^t D_i} ,1)]; X_q \right)\\
%  &=&
%      u_0\, p^{-k+\frac12}
%      \prod_{q|f_{M_0}}\!\!
%       \widetilde{F}_q\left(M_0; X_q \right) 
%      +
%      u_1\,
%      \prod_{q|f_{M_1}}\!\!
%       \widetilde{F}_q\left(M_1; X_q \right)
%    +
%      u_2\, p^{k-\frac12}
%      \prod_{q|f_{M_2}}\!\!
%       \widetilde{F}_q\left(M_2; X_q \right).    
%\end{eqnarray*}
%\todo{non vanishing of $h_{k-\frac12}(|D_M|)$ cf. Eichler-Zagier}
We denote by $c_0(N,R)$, $c_1(N,R)$ and $c_2(N,R)$ the $(N,R)$-th Fourier coefficients
of $e_{k+n,\frac{m}{p^2}}^{(2n-1)}|U(p^2)$, $e_{k+n,m}^{(2n-1)}|U(p)$ and $e_{k+n, m p^2}^{(2n-1)}$, respectively.
We remark $c_0(N,R) = 0$ if $p^2 {\not |} m$.
Furthermore, we remark that $c_0(N,R) = 0$ if $R \notin p^2 \Z^{2n-1}$, and $c_1(N,R) = 0$ if $R \notin p\Z^{2n-1}$.

Because the forms in the set $\left\{E_{\frac{m}{d^2},k}^{(2n-1)}|U(d)\right\}_{d}$,
where $d$ runs over all positive-integers such that $d^2|m$,
are linearly independent (see Lemma~\ref{lemma:linear_independent})
and because of Proposition~\ref{prop:Bo_Satz},
%
%We remark that if one of entries in $R$ is not divisible by $p^2$,
%then $c_0(N,R) = 0$, and simililarly if one of entries in $R$ is not divisible by $p$
%then $c_1(N,R) = 0$.
%
three forms $e_{k+n,\frac{m}{p^2}}^{(2n-1)}|U(p^2)$, $e_{k+n,m}^{(2n-1)}|U(p)$ and $e_{k+n, m p^2}^{(2n-1)}$
are linearly independent.

From now on we assume $p^2|m$ for simplicity.
The proof of Theorem~\ref{thm:phi_maass} for the case $p^2 {\not |} m$ is similar
to the case $p^2 | m$.

There exist pairs of matrices $(N_j,R_j)$ $(j=1,2,3)$
such that
\begin{eqnarray*}
  \det\left(
   \begin{pmatrix}
     c_0(N_1,R_1) & c_1(N_1,R_1) & c_2(N_1,R_1) \\
     c_0(N_2,R_2) & c_1(N_2,R_2) & c_2(N_2,R_2) \\
     c_0(N_3,R_3) & c_1(N_3,R_3) & c_2(N_3,R_3)
   \end{pmatrix}
  \right)
  &\neq&
  0.
\end{eqnarray*}
For $j = 1,2,3$, we define
\begin{eqnarray*}
      M_0^{(j)}
    :=
      \begin{pmatrix}
        N_j & \dfrac{1}{2p^2} R_j \\
        \dfrac{1}{2p^2} R_j & \dfrac{m}{p^2}
      \end{pmatrix},\
      M_1^{(j)}
    :=
      \begin{pmatrix}
        N_j & \dfrac{1}{2p} R_j \\
        \dfrac{1}{2p} R_j & m
      \end{pmatrix}, \
        M_2^{(j)}
    :=
      \begin{pmatrix}
        N_j & \dfrac12 R_j \\
        \dfrac12 R_j & m p^2
      \end{pmatrix},
\end{eqnarray*}
%Then, the $(N_j,R_j)$-th Fourier coefficients of
%  $e_{k+n,\frac{m}{p^2}}^{(n)}|U(p^2)$,
%  $e_{k+n,m}^{(n)}|U(p)$ and $e_{k+n, m p^2}^{(n)}$
%are
%\begin{eqnarray*}
%   &&
%        p^{-(k-\frac12)} C(|D_{M_1}|) f_{M_1}^{k-\frac12}
%        \prod_{q|f_{M_0^{(j)}}}\!\!
%        \widetilde{F}_q\left(M_0^{(j)}; X_q \right),\\
%   &&
%        C(|D_{M_1}|) f_{M_1}^{k-\frac12}
%        \prod_{q|f_{M_1^{(j)}}}\!\!
%        \widetilde{F}_q\left(M_1^{(j)}; X_q \right),
%\end{eqnarray*}
% and
%\begin{eqnarray*}
%   &&
%        p^{k-\frac12} C(|D_{M_1}|) f_{M_1}^{k-\frac12}
%        \prod_{q|f_{M_2^{(j)}}}\!\!
%        \widetilde{F}_q\left(M_2^{(j)}; X_q \right),
%\end{eqnarray*}
%respectively.
and we put a $3\times 3$ matrix
\begin{eqnarray*}
     C\!\left({\left\{(N_j,R_j)\right\}}_j;\, \{X_q\}_{q:prime}\right)
     &:=&
     \left(
        \prod_{q|f_{M_i^{(j)}}}\!\!
         \widetilde{F}_q\left(M_i^{(j)}; X_q \right)
     \right)_{\begin{smallmatrix} j = 1,2,3 \\ i = 0,1,2 \end{smallmatrix}}.
\end{eqnarray*}
Then from the identity (\ref{eq:V_Fourier}) we have
\begin{eqnarray*}
    &&
      p^{-(2n-1)(k-\frac12)} 
      \sum_i \left(\det D_i\right)^{-n-\frac12}\\
    &&
     \times
      \begin{pmatrix}
        \prod_{q|f_{M_1^{(1)}[diag(p^{-1} {^t D_i},1)]}}\!\!
         \widetilde{F}_q\left(M_1^{(1)}[\mbox{diag}(p^{-1} {^t D_i} ,1)]; q^{k-\frac12} \right)\\
         \prod_{q|f_{M_1^{(2)}[diag(p^{-1} {^t D_i},1)]}}\!\!
         \widetilde{F}_q\left(M_1^{(2)}[\mbox{diag}(p^{-1} {^t D_i} ,1)]; q^{k-\frac12} \right)\\
         \prod_{q|f_{M_1^{(3)}[diag(p^{-1} {^t D_i},1)]}}\!\!
         \widetilde{F}_q\left(M_1^{(3)}[\mbox{diag}(p^{-1} {^t D_i} ,1)]; q^{k-\frac12} \right)
       \end{pmatrix}\\
    &=&
        C\!\left({\left\{(N_j,R_j)\right\}}_j;\, \{q^{k-\frac12}\}_{q}\right)
       \,
        \begin{pmatrix}
          u_0\, p^{k-\frac12}\\
          u_1\\
          u_2\, p^{-k+\frac12}
        \end{pmatrix}.
\end{eqnarray*}
Hence we obtain
\begin{eqnarray*}
    &&
      \sum_i \left(\det D_i\right)^{-n-\frac12}\, C\!\left({\left\{(N_j,R_j)\right\}}_j;\, \{q^{k-\frac12}\}_{q}\right)^{-1}\\
    &&
     \times      
      \begin{pmatrix}
        \prod_{q|f_{M_1^{(1)}[diag(p^{-1} {^t D_i},1)]}}\!\!
         \widetilde{F}_q\left(M_1^{(1)}[\mbox{diag}(p^{-1} {^t D_i} ,1)]; q^{k-\frac12} \right)\\
         \prod_{q|f_{M_1^{(2)}[diag(p^{-1} {^t D_i},1)]}}\!\!
         \widetilde{F}_q\left(M_1^{(2)}[\mbox{diag}(p^{-1} {^t D_i} ,1)]; q^{k-\frac12} \right)\\
         \prod_{q|f_{M_1^{(3)}[diag(p^{-1} {^t D_i},1)]}}\!\!
         \widetilde{F}_q\left(M_1^{(3)}[\mbox{diag}(p^{-1} {^t D_i} ,1)]; q^{k-\frac12} \right)
       \end{pmatrix}\\
    &=&
        p^{(2n-1)(k-\frac12)} 
        \begin{pmatrix}
          u_0\, p^{k-\frac12}\\
          u_1\\
          u_2\, p^{-k+\frac12}
        \end{pmatrix}.
\end{eqnarray*}
The RHS of the above identity does not depend on the choices of
$(N_j,R_j)$ $(j=1,2,3)$.
Furthermore, the above identity holds for infinitely many integer $k$.
Therefore there exist Laurent polynomials $\Phi_i(X_p)$ $\in$ $\C[X_p+X_p^{-1}]$ $(i=0,1,2)$
which are independent of the choices of $(N_j,R_j)$ $(j=1,2,3)$,
%operator $V(\Gamma_{2n-1} g \Gamma_{2n-1})$ with $n(g)=p^2$,
such that
%$u_0 = p^{k-\frac12} \Phi_0(p^{k-\frac12})$, $u_1 = \Phi_1(p^{k-\frac12})$
%and $u_2 = p^{-k+\frac12} \Phi_2(p^{k-\frac12})$.
%\todo{}
%Hence
\begin{eqnarray*}
    &&
      \sum_i \left(\det D_i\right)^{-n-\frac12}\, C\!\left( {\left\{(N_j,R_j)\right\}}_j;\, \{X_q\}_{q}\right)^{-1}\\
    &&
     \times      
      \begin{pmatrix}
        \prod_{q|f_{M_1^{(1)}[diag(p^{-1} {^t D_i},1)]}}\!\!
         \widetilde{F}_q\left(M_1^{(1)}[\mbox{diag}(p^{-1} {^t D_i} ,1)]; X_q \right)\\
         \prod_{q|f_{M_1^{(2)}[diag(p^{-1} {^t D_i},1)]}}\!\!
         \widetilde{F}_q\left(M_1^{(2)}[\mbox{diag}(p^{-1} {^t D_i} ,1)]; X_q \right)\\
         \prod_{q|f_{M_1^{(3)}[diag(p^{-1} {^t D_i},1)]}}\!\!
         \widetilde{F}_q\left(M_1^{(3)}[\mbox{diag}(p^{-1} {^t D_i} ,1)]; X_q \right)
       \end{pmatrix}\\
    &=&
    \begin{pmatrix}
      \Phi_0(X_p)\\
      \Phi_1(X_p)\\
      \Phi_2(X_p)
    \end{pmatrix}.
\end{eqnarray*}
In particular, we have
\begin{eqnarray*}
  &&
    \sum_i \det D_i^{-n-\frac12}
    \prod_{q|f_{M_1[diag(p^{-1} {^t D_i},1)]}}\!\!
     \widetilde{F}_q\left(M_1[\mbox{diag}(p^{-1} {^t D_i} ,1)]; X_q \right)\\
  &=&
      \Phi_0(X_p)
      \prod_{q|f_{M_0}}\!\!
       \widetilde{F}_q\left(M_0; X_q \right) 
      +
      \Phi_1(X_p)\,
      \prod_{q|f_{M_1}}\!\!
       \widetilde{F}_q\left(M_1; X_q \right)
    +
      \Phi_2(X_p)\, 
      \prod_{q|f_{M_2}}\!\!
       \widetilde{F}_q\left(M_2; X_q \right).    
\end{eqnarray*}
Therefore,
by substituting $X_q = \alpha_q$ in the above identity and
by using the relations
$p f_{M_0} = f_{M_1} = p^{-1} f_{M_2}$ and
$D_{M_0} = D_{M_1} = D_{M_2}$,
we obtain
\begin{eqnarray*}
  &&
    p^{-(2n-1)(k-\frac12)} C\left(|D_{M_1}| \right) f_{M_1}^{k-\frac12}
    \sum_i \det D_i^{-n-\frac12}\\
  && \times\!\!
    \prod_{q|f_{M_1[diag(p^{-1} {^t D_i},1)]}}\!\!
     \widetilde{F}_q\left(M_1[\mbox{diag}(p^{-1} {^t D_i} ,1)];\alpha_q \right) \\
  &=&
    p^{-(2n-1)(k-\frac12)}
   \left\{
    \Phi_0(\alpha_p)\,  p^{k-\frac12} C\left(|D_{M_0}| \right) f_{M_0}^{k-\frac12}
      \prod_{q|f_{M_0}}\!\!
       \widetilde{F}_q\left(M_0; \alpha_q \right) \right. \\
  &&\ \qquad +
   \left. \Phi_1(\alpha_p)\,  C\left(|D_{M_1}| \right) f_{M_1}^{k-\frac12}
      \prod_{q|f_{M_1}}\!\!
       \widetilde{F}_q\left(M_1; \alpha_q \right) \right. \\
  &&\ \qquad +
   \left. \Phi_2(\alpha_p)\,  p^{-k+\frac12} C\left(|D_{M_2}| \right) f_{M_2}^{k-\frac12}
      \prod_{q|f_{M_2}}\!\!
       \widetilde{F}_q\left(M_2; \alpha_q \right) \right\} .
\end{eqnarray*}
Thus
\[
   \phi_m|V(\Gamma_{2n-1} g \Gamma_{2n-1})\\
 =
   p^{-(2n-1)(k-\frac12)}
   \left(p^{k-\frac12} \phi_{\frac{m}{p^2}}|U(p^2), \phi_{m}|U(p), p^{-k+\frac12} \phi_{mp^2} \right)
   \begin{pmatrix}
     \Phi_0(\alpha_p)\\
     \Phi_1(\alpha_p)\\
     \Phi_2(\alpha_p)
   \end{pmatrix}.
\]

Hence
there exist Laurent polynomials $\Phi_{j,l}(X_p) \in \C[X_p+X_p^{-1}]$  $(j=0,1,2,\ l=0,...,2n-1)$ which satisfy
\begin{eqnarray}
  \label{id:phi_m_V}
  &&\\
  \nonumber
  &&
    \phi_m|(V_{0,2n-1}(p^2),...,V_{2n-1,0}(p^2))\\
  &=&
   p^{-(2n-1)(k-\frac12)}
   \left(p^{k-\frac12} \phi_{\frac{m}{p^2}}|U(p^2), \phi_{m}|U(p), p^{-k+\frac12} \phi_{mp^2} \right)
   \begin{pmatrix}
     \Phi_{0,0}(\alpha_p) & \cdots & \Phi_{0,2n-1}(\alpha_p)\\
     \Phi_{1,0}(\alpha_p) & \cdots & \Phi_{1,2n-1}(\alpha_p)\\
     \Phi_{2,0}(\alpha_p) & \cdots & \Phi_{2,2n-1}(\alpha_p)
   \end{pmatrix}.
   \nonumber
\end{eqnarray}
Here the polynomials $\Phi_{j,l}(X_p) $
depend on the choices of $p$ and $m$,
but not on the choice of $f$ which is the preimage
of the Duke-Imamoglu-Ibukiyama-Ikeda lift $F$.
The $m$-th Fourier-Jacobi coefficient $e_{k+n,m}^{(2n-1)}$ of Siegel-Eisenstein series
satisfies also the identity (\ref{id:phi_m_V}).
Thus, because of Theorem~\ref{thm:eins} and of Lemma~\ref{lem:Adash},
we obtain
\begin{eqnarray*}
  \begin{pmatrix}
     \Phi_{0,0}(p^{k-\frac12}) & \cdots & \Phi_{0,2n-1}(p^{k-\frac12})\\
     \Phi_{1,0}(p^{k-\frac12}) & \cdots & \Phi_{1,2n-1}(p^{k-\frac12})\\
     \Phi_{2,0}(p^{k-\frac12}) & \cdots & \Phi_{2,2n-1}(p^{k-\frac12})
   \end{pmatrix}
  &=&
  \begin{pmatrix}
    0 & 1 \\
    p^{-n-\frac12} & p^{-n-\frac12}(-1+p\, \delta_{p|m})\\
    0 & p^{-2n+1}
  \end{pmatrix}
  A'_{2,2n}(p^{-(k-\frac12)}).
\end{eqnarray*}
Furthermore, this identity holds for infinitely many $k$.
Hence we obtain
\begin{eqnarray}
  \label{id:Phi_X_p} 
  &&
  \\
  \nonumber
  \begin{pmatrix}
     \Phi_{0,0}(X_p) & \cdots & \Phi_{0,2n-1}(X_p)\\
     \Phi_{1,0}(X_p) & \cdots & \Phi_{1,2n-1}(X_p)\\
     \Phi_{2,0}(X_p) & \cdots & \Phi_{2,2n-1}(X_p)
   \end{pmatrix}
  &=&
  \begin{pmatrix}
    0 & 1 \\
    p^{-n-\frac12} & p^{-n-\frac12}(-1+p\, \delta_{p|m})\\
    0 & p^{-2n+1}
  \end{pmatrix}
  A'_{2,2n}(X_p^{-1}).
\end{eqnarray}
In particular, we get $A'_{2,2n}(X_p) = A'_{2,2n}(X_p^{-1})$.
Due to the identities
(\ref{id:phi_m_V}) and (\ref{id:Phi_X_p}),
we thus obtain Theorem~\ref{thm:phi_maass}.

%\begin{eqnarray*}
%   A'_{2,2n}(\alpha_p)
% &=&
%   B'_{2,2n}(p^{\frac32-n} \alpha_p, p^{\frac52-n} \alpha_p ,..., p^{-\frac32+n} \alpha_p).
%\end{eqnarray*}

%\begin{eqnarray*}
%  A'_{2,2n}(p^{k-\frac12})
% &=&
%  p^{(n-1)(2k-1)}
%  B_{2,2n}(p^{2-n-k},p^{3-n-k},...,p^{n-1-k}).
%\end{eqnarray*}
%
%\begin{eqnarray*}
% A'_{2,2n}(X_p^{-1})
%  &=&
% A'_{2,2n}(X_p).
%\end{eqnarray*}

\subsection{Proof of Theorem~\ref{thm:drei}}

%\begin{eqnarray*}
%   \mathcal{F}_{f,g}(\tau)
% &=&
%   \int_{\Gamma_1\backslash \mathfrak{H}_1}
%     \sum_{m>0} \phi_m(\tau,0) e\!(m\omega)
%     \overline{g(\omega)}\, \mbox{Im}(\omega)^{k+n-2} \, d\, \omega.
%\end{eqnarray*}

%Let $W$ be the map introduced in Section~\ref{ss:index_shift}.
%%From the property of $W$
From the identity~(\ref{eq:W_T_V}) in Section~\ref{ss:index_shift}
and from Theorem~\ref{thm:phi_maass} we obtain
% \begin{eqnarray*}
%   &&
%     p^{-2\left\{(2n-1)(k+n)-<2n-1>\right\}} W(\phi_m)|(T_{0,2n-1}(p^2),...,T_{2n-1,0}(p^2)) \\
%   &=&
%     W(\phi_m | \left( V_{0,2n-1}(p^2), ... , V_{2n-1,0}(p^2)\right))\\
%   &=&
%     p^{-(n-1)(2k-1)}
%     \left(
%       W(\phi_{\frac{m}{p^2}} | U(p^2)), 
%       W(\phi_{m} | U(p)),
%       W(\phi_{mp^2})
%     \right)\\
%   && \times
%     \begin{pmatrix}
%       0              & 1 \\
%       p^{-k-n} & p^{-k-n}(-1+p\, \delta_{p|m})  \\
%       0              & p^{-2k-2n+2}
%     \end{pmatrix}
%     A'_{2,2n}(\alpha_p).
% \end{eqnarray*}
%Hence
 \begin{eqnarray*}
   &&
    \phi_m(\tau,0)|(T_{0,2n-1}(p^2),...,T_{2n-1,0}(p^2))\\
   &=&
     p^{2nk+n-1}
     \left(
       \phi_{\frac{m}{p^2}}(\tau,0), 
       \phi_{m}(\tau,0),
       \phi_{mp^2}(\tau,0)
     \right)
     \begin{pmatrix}
       0              & 1 \\
       p^{-k-n} & p^{-k-n}(-1+p\, \delta_{p|m})  \\
       0              & p^{-2k-2n+2}
     \end{pmatrix}
     A'_{2,2n}(\alpha_p).
 \end{eqnarray*}
Due to the identity
  $\mathcal{F}\left(\begin{pmatrix}\tau&0\\0&\omega \end{pmatrix}\right)
    = \displaystyle{\sum_{m>0} \phi_m(\tau,0)e(m\omega)}$, we have
 \begin{eqnarray*}
  &&
    \sum_{m>0}
       \left\{
           \phi_{\frac{m}{p^2}}(\tau,0) + p^{-k-n}(-1+p\, \delta_{p|m}) \phi_{m}(\tau,0) + p^{-2k-2n+2} \phi_{mp^2}(\tau,0)
       \right\}
       e(m\omega)\\
  &=&
    p^{-2k-2n+2}\,
    \left. F\!\left(\begin{pmatrix} \tau&0\\0&\omega \end{pmatrix}\right) \right|_{\omega} T_{1,0}(p^2),
 \end{eqnarray*}
 where in the RHS
% we regard $F\left(\begin{pmatrix} \tau & 0 \\ 0 & \omega \end{pmatrix} \right)$
% as a function of $\omega \in \H_1$
% and acts the Hecke operator $T_{1,0}(p^2)$ on it.
 we regard that the Hecke operator $T_{1,0}(p^2)$ acts on $F\left(\begin{pmatrix} \tau & 0 \\ 0 & \omega \end{pmatrix} \right)$
 as a function of $\omega \in \H_1$ for a fixed $\tau \in \H_1$.
 Therefore
 \begin{eqnarray*}
   &&
    \sum_{m>0} \phi_m(\tau,0)|(T_{0,2n-1}(p^2),...,T_{2n-1,0}(p^2)) e(m\omega) \\
   &=&
     p^{2nk+n-1}
     \left(
       p^{-k-n} F\left(\begin{pmatrix} \tau & 0 \\ 0 & \omega \end{pmatrix}\right),
       p^{-2k-2n+2} \left. F\left(\begin{pmatrix} \tau & 0 \\ 0 & \omega \end{pmatrix}\right) \right|_{\omega}T_{1,0}(p^2)
     \right)
     A'_{2,2n}(\alpha_p).
 \end{eqnarray*}
We denote by $\left<h_1(\omega),h_2(\omega)\right>_{\omega}$ the Petersson inner product of two elliptic modular forms $h_1$, $h_2$.
The symbol $\lambda_g(p^2)$ denotes the eigenvalue of $g$ for $T_{1,0}(p^2)$.

Because $\left<F\left(\begin{pmatrix} \tau&0\\0& \omega \end{pmatrix}\right),\, g(\omega)\right>_{\omega} = \mathcal{F}_{f,g}(\tau)$
and because
\begin{eqnarray*}
   \left<\left. F\left(\begin{pmatrix} \tau&0\\0& \omega \end{pmatrix}\right)\right|_{\omega} T_{1,0}(p^2),\, g(\omega)\right>_{\omega}
 &=&
   \lambda_{g}(p^2) \mathcal{F}_{f,g}(\tau),
\end{eqnarray*}
we obtain
 \begin{eqnarray*}
  &&
    \mathcal{F}_{f,g}|(T_{0,2n-1}(p^2),...,T_{2n-1,0}(p^2))\\
  &=&
    \left<\sum_{m>0} \phi_m(\tau,0)|(T_{0,2n-1}(p^2),...,T_{2n-1,0}(p^2)) e(m\omega),\ g(\omega)\right>_{\omega} \\
  &=&
    p^{2nk+n-1}
     \left(
       p^{-k-n} \mathcal{F}_{f,g},\
       p^{-2k-2n+2} \lambda_{g}(p^2) \mathcal{F}_{f,g}
     \right)
     A'_{2,2n}(\alpha_p).
 \end{eqnarray*}
Therefore we proved Theorem~\ref{thm:drei}.

%%%%%%
\subsection{Proof of Corollary~\ref{coro:vier}}
%%%%%%

%Let $\mathcal{H}_{p^2}^n \subset \mathcal{H}_p^n$ be the subring of the local Hecke ring
%in Section~\ref{ss:Satake_Siegel_Phi}.
%We identify $\mathcal{H}_{p^2}^{2n-1}$ and the subring of $\C[X_0^{\pm1},X_1^{\pm1},...,X_{2n-1}^{\pm1}]^{W_{2n-1}}$
%through the Satake isomorphism $\varphi$ denoted in Section~\ref{ss:Satake_Siegel_Phi}.
%
%\begin{eqnarray*}
%   \prod_{i=1}^{2n-1} \left\{(1-X_i T)(1-X_i^{-1}T) \right\}
% &=&
%   \sum_{j=0}^{4n-2} \gamma_j(\varphi(T_{0,2n-1}(p^2)),...,\varphi(T_{2n-1,0}(p^2)))\, T^j.
%\end{eqnarray*}
%
%\begin{eqnarray*}
% \mathcal{F}_{e_{2k}^{(1)},g}
%\end{eqnarray*}
%
%\begin{eqnarray*}
%  &&
%     \mathcal{F}_{e_{2k}^{(1)},g}|(T_{0,2n-1}(p^2),...,T_{2n-1,0}(p^2))\\
%  &=&
%    p^{2nk+n-1}
%     \left(
%       p^{-k-n},\
%       p^{-2k-2n+2} \lambda_{g}(p^2)
%     \right)
%     A'_{2,2n}(p^{k-\frac12})  \mathcal{F}_{e_{2k}^{(1)},g} .
%\end{eqnarray*}

Let $\{\mu_0,\mu_1,...,\mu_{2n-1} \}$ be the Satake parameter of $\mathcal{F}_{f,g}$ at a prime $p$.
We recall
 \begin{eqnarray*}
     A'_{2,2n}(X_p)
   &=&
     B'_{2,2n}(p^{\frac32-n} X_p, p^{\frac52-n} X_p ,..., p^{-\frac32+n} X_p),
 \end{eqnarray*}
where the matrices $A'_{2,2n}$ and $B'_{2,2n}$ are defined in Section~\ref{ss:Satake_Siegel_Phi}.
Because of the construction of $A'_{2,2n}(X_p)$,
the matrix $A'_{2,2n}(\alpha_p)$ determines a Satake parameter $\{\mu_2,...,\mu_{2n-1}\}$
up to the action of the Weyl group $W_{2n-1}$.
Hence we can take
\begin{eqnarray*}
    \{\mu_2,...,\mu_{2n-1}\}
  &=&
    \{p^{\frac32-n}\alpha_p,p^{\frac52-n}\alpha_p, ..., p^{-\frac32+n}\alpha_p \}.
\end{eqnarray*}

Now, from Section~\ref{ss:Satake_Siegel_Phi} and Section~\ref{ss:standard-L}, we recall
\begin{eqnarray*}
  &&
   (\varphi(T_{0,2n-1}(p^2)),\varphi(T_{1,2n-2}(p^2)),...,\varphi(T_{2n-1,0}(p^2)))\\
  &=&
   \left(\prod_{i=2}^{2n-1} X_i\right)
  (\varphi(T_{0,1}(p^2)),\varphi(T_{1,0}(p^2)))
   B'_{2,2n}(X_2,...,X_{2n-1})
\end{eqnarray*}
and
 $
    \mu_0^2 \mu_1 \cdots \mu_{2n-1}
  =
    p^{(2n-1)k}
 $,
where $\varphi$ is the Satake isomorphism denoted in Section~\ref{ss:Satake_Siegel_Phi},
and where $T_{l,2n-l}(p^2)$ $(l=0,...,2n)$ is the Hecke operator denoted
in~\ref{ss:index_shift}.
%where the Satake isomorphism $\varphi$ and the Hecke operators $T_{l,2n-l}(p^2)$ are defined
%in Section~\ref{ss:Satake_Siegel_Phi} and~\ref{ss:index_shift}, respectively.
Furthermore, from a straightforward calculation we have
\begin{eqnarray*}
  \varphi(T_{0,1}(p^2)) &=& p^{-1} X_0^2 X_1,\\
  \varphi(T_{1,0}(p^2)) &=& p^{-1} X_0^2 X_1 (p X_1^{-1} + (p-1) + pX_1).
\end{eqnarray*}
From Theorem~\ref{thm:drei} and the above relations,
we have
\begin{eqnarray*}
   &&
    p^{2nk+n-1}
     \left(
       p^{-k-n},\
       p^{-2k-2n+2} \lambda_{g}(p^2)
     \right)
     A'_{2,2n}(\alpha_p)\\
   &=&
     \left(\prod_{i=2}^{2n-1} \mu_i\right)
     (p^{-1} \mu_0^2 \mu_1,\ p^{-1} \mu_0^2 \mu_1 (p \mu_1^{-1} + (p-1) + p \mu_1))
      B'_{2,2n}(\mu_2,...,\mu_{2n-1}).
\end{eqnarray*}
Hence, from the fact
that the rank of the matrix $A'_{2,2n}(\alpha_p)$ is two, we obtain
\begin{eqnarray*}
   p \mu_1^{-1} + (p-1) + p \mu_1
  &=&
   p^{-k-n+2} \lambda_g(p^2).
\end{eqnarray*}
%Because $\lambda_g(p^2) = (\beta_p^2 + 1 + \beta_p^{-2})p^{k+n-1} - p^{k+n-2}$,
On the other hand, we have $\lambda_g(p^2) = p^{k+n-2}(p \beta_p^2 + (p-1) + p \beta_p^{-2})$.
Thus we can take $\mu_1 = \beta_p^2$.
Hence we obtain
\begin{eqnarray*}
    \left\{ \mu_1,\mu_2,\mu_3...,\mu_{2n-1}\right\}
  &=&
    \left\{\beta_p^2,p^{\frac32-n}\alpha_p,p^{\frac52-n}\alpha_p,...,p^{-\frac32+n}\alpha_p\right\}
\end{eqnarray*}
up to the action of the Weyl group $W_{2n-1}$.

%A Satake parameter of the Klingen type Eisenstein series is
%$\{\beta_p^2,p^{2-n-k},p^{3-n-k},..,p^{n-1-k} \}$
%up to the action of the Weyl group $W_{2n-1}$.
%
%\begin{lemma}
%The ring homomorphism
%%\begin{eqnarray*}
%% \lambda \, : \, \C[X_0^{\pm1},X_1^{\pm1},...,X_{2n-1}^{\pm1}]^{W_{2n-1}} \rightarrow \C^{ * }
%%\end{eqnarray*}
%\begin{eqnarray*}
% \lambda \, : \, \mathcal{H}_{p^2}^{2n-1} \rightarrow \C^{ * }
%\end{eqnarray*}
%such that
%$\mathcal{F}_{f,g}|T = \lambda(\varphi(T)) \mathcal{F}_{f,g}$ $(T \in \mathcal{H}_{p^2}^{2n-1})$
%is given by
%\begin{eqnarray*}
% \lambda(X_0^2 X_1 \cdots X_{2n-1}) := p^{(2n-1)k}, \,
% \lambda(X_1) := \beta_p^2, \,
% \lambda(X_j) :=  p^{j-\frac12 - n} \alpha_p \ (j = 2,...,2n-1)
%\end{eqnarray*}
%up to the action of the Weyl group $W_{2n-1}$.
%%on $\C[X_0^{\pm},X_1^{\pm},...,X_{2n-1}^{\pm}]$.
%\end{lemma}
%\begin{proof}
%\todo{}
%\end{proof}
%
%Hence the Satake parameter $\left\{\mu_1,...,\mu_{2n-1} \, |\, \mu_i = \lambda(X_i) \right\}$ of $\mathcal{F}_{f,g}$ is
%\begin{eqnarray*}
% \left\{\mu_1,...,\mu_{2n-1} \right\}
% &=&
% \left\{\beta_p^2,p^{\frac32-n}\alpha_p,p^{\frac52-n}\alpha_p, ..., p^{-\frac32+n}\alpha_p \right\}.
%\end{eqnarray*}
%\todo{$\mu_i^{-1}$}

The standard $L$-function of $\mathcal{F}_{f,g}$ is
\begin{eqnarray*}
   L(s,\mathcal{F}_{f,g},\mbox{st})
 &=&
   \prod_p\left\{(1-p^{-s})\prod_{i=1}^{2n-1}\left\{(1-\mu_i p^{-s})(1-\mu_i^{-1} p^{-s}) \right\}\right\}^{-1}\\
 &=&
   \prod_p\Biggl\{ (1-p^{-s})(1-\beta_p^2 p^{-s})(1-\beta_p^{-2} p^{-s}) \\
 && \left. \times
   \prod_{i=1}^{2n-2}\left\{(1-\alpha_p p^{i+\frac12-n-s})
                            (1-\alpha_p^{-1} p^{-i-\frac12+n-s}) \right\} \right\}^{-1}\\
% &=&
%   \prod_p\left\{(1-p^{-s})(1-\beta_p^2 p^{-s})(1-\beta_p^{-2} p^{-s})
%   \prod_{i=1}^{2n-2}\left\{(1-\alpha_p p^{i+\frac12-n-s})
%                            (1-\alpha_p^{-1} p^{i+\frac12-n-s}) \right\} \right\}^{-1}.
 &=&
   L(s,g,\mbox{Ad}) \prod_p
   \prod_{i=1}^{2n-2}\left\{(1-\alpha_p p^{i+\frac12-n-s})
                            (1-\alpha_p^{-1} p^{i+\frac12-n-s}) \right\}^{-1}.
\end{eqnarray*}
Because $L(s,f) = \displaystyle{\prod_{p}\left\{(1-\alpha_p p^{k-\frac12-s})(1-\alpha_p^{-1} p^{k-\frac12-s})\right\}^{-1}}$,
we obtain Corollary~\ref{coro:vier}.

%\subsection{Example}
%If $n=2$, then we have
%\begin{eqnarray*}
%    A'_{2,4}(X_p)
%  &=&
%    \begin{pmatrix}
%      p^{-3} & a_{1,2} & a_{1,3} & a_{1,4}\\
%      0      &  p^{-3} & a_{2,3} & a_{2,4}
%    \end{pmatrix}.
%\end{eqnarray*}
%Here
%\begin{eqnarray*}
%    a_{1,2}
%  &=&
%    p^{-2}\left\{
%      p^{\frac12}X_p + p^{-\frac12}X_p^{-1}+ p^{-\frac12}X_p + p^{\frac12}X_p^{-1}+(p^2-1)p^{-2}
%    \right\},\\
%    a_{1,3}
%  &=&
%    p^{-4} (2p^4-p^2-1)\\
%  &&
%    +(p^{\frac12}X_p + (p-1)p^{-2} + p^{-\frac12} X_p^{-1})
%     (p^{-\frac12}X_p + (p-1)p^{-2} + p^{\frac12} X_p^{-1}),\\
%    a_{2,3}
%  &=&
%    p^{-1}\left\{
%      p^{\frac12}X_p + p^{-\frac12}X_p^{-1}+ p^{-\frac12}X_p + p^{\frac12}X_p^{-1}+(p^2-1)p^{-2}
%    \right\},\\
%    a_{1,4}
%  &=&
%    p^{-1}(p^2-1)\left\{
%      p^{\frac12}X_p + p^{-\frac12}X_p^{-1}+ p^{-\frac12}X_p + p^{\frac12}X_p^{-1}+(p^2-1)p^{-2}
%    \right\},\\
%    a_{2,4}
%  &=&
%    p^{-2} (p^2-1)\\
%  &&
%    +(p^{\frac12}X_p + (p-1)p^{-1} + p^{-\frac12} X_p^{-1})
%     (p^{-\frac12}X_p + (p-1)p^{-1} + p^{\frac12} X_p^{-1}).
%\end{eqnarray*}

%%%%%%%%%%%%%%%%%%%%%%%%%%%%%%%%%%%%%%%%%%%%%%%%%%%%%%%%%%%%%%%%
%%%%%%%%%%%%%%%%%%%%%%%%%%%%%%%%%%%%%%%%%%%%%%%%%%%%%%%%%%%%%%%%

\vspace{1cm}

\noindent
Department of Mathematics, Joetsu University of Education,\\
1 Yamayashikimachi, Joetsu, Niigata 943-8512, JAPAN\\
e-mail hayasida@juen.ac.jp

\end{document}